\theoremstyle{plain}
\newtheorem{maintheorem}{Theorem}
\newtheorem{maincorollary}{Corollary}
\newtheorem{theorem}{Theorem }[section]
\newtheorem{proposition}[theorem]{Proposition}
\newtheorem{lemma}[theorem]{Lemma}
\newtheorem{corollary}[theorem]{Corollary}
\theoremstyle{definition} \theoremstyle{remark}
\newtheorem{remark}[theorem]{Remark}
\newtheorem{definition}[theorem]{Definition}
\newcommand{\diam}{\operatorname{diam}}
\newcommand{\topp}{\operatorname{top}}
\newcommand{\Ptop}{P_{\topp}}
\newcommand{\al} {\alpha}       
\newcommand{\ga} {\gamma}    
       \newcommand{\De}{\Delta}
\newcommand{\vep}{\varepsilon}
\newcommand{\la} {\lambda}
\newcommand{\si} {\sigma}
\newcommand{\cL}{\mathcal{L}}
\newcommand{\cB}{\mathcal{B}}
\newcommand{\cO}{\mathcal{O}}
\newcommand{\cN}{\mathcal{N}}
\newcommand{\cQ}{\mathcal{Q}}
\newcommand{\var}{\text{var}_{\theta}}
\newcommand{\osc}{\operatornamewithlimits{osc}}
\newcommand{\cQn}{\cQ^{(n)}}
\newcommand{\cQk}{\cQ^{(k)}}
\newcommand{\ov}{\overline}
\begin{document}

\title{Correlation decay and recurrence asymptotics for some robust
        nonuniformly hyperbolic maps}
\author{Paulo Varandas}

\address{Paulo Varandas, Instituto de Matem\'atica,
Universidade Federal do Rio de Janeiro C. P. 68.530, 21.945-970, Rio
de Janeiro, RJ-Brazil}  \email{varandas@impa.br}

\date{\today}

%\thanks{This work was partially supported by Funda\c c\~ao para a Ci\^encia e Tecnologia (FCT-Portugal)
%by the grant SFRH/BD/11424/2002) and by  Funda\c c\~ao de Amparo \`a
%Pesquisa do Estado do Rio de Janeiro - FAPERJ}

\keywords{Decay of correlations, equilibrium states, non-uniform
hyperbolicity, hitting and return times}

\maketitle

\begin{abstract}
We study decay of correlations, the asymptotic distribution of
hitting times and fluctuations of the return times for a robust
class of multidimensional non-uniformly hyperbolic transformations.
Oliveira and Viana~\cite{OV07} proved that there is a unique
equilibrium state $\mu$ for a large class of non-uniformly expanding
transformations and H\"older continuous potentials with small
variation. For an open class of potentials with small variation, we
prove quasi-compactness of the Ruelle-Perron-Frobenius operator in a
space $V_\theta$ of functions with essential bounded variation that
strictly contain H\"older continuous observables. We deduce that the
equilibrium states have exponential decay of correlations.
Furthermore, we prove exponential asymptotic distribution of hitting
times and log-normal fluctuations of the return times around the
average $h_\mu(f)$.
\end{abstract}

%\tableofcontents

%%%%%%%%%%%%%%%%%%%%%%%%%%%%%%%%%%%%%%%%%%%%%%%%%%%%%%%%%%%%%%%%%%%
\section{Introduction}

Given a measure preserving discrete dynamical system it follows by
earlier work of Poincar\'e that the orbit of almost every point will
return to any arbitrary small neighborhood of it. Return times are
strongly related to the complexity of the dynamical system and, in
many cases, the entropy coincides with the exponential growth rate
of the return times to decreasing sequences of nested sets.
A related concept is the one of hitting times. Typical orbits will
eventually visit any positive measure set with some frequency which
coincides, in average, with the measure of the set. In particular
return times reflect both hitting times statistics and the rate at
which the system is mixing. Some questions that naturally arise are:

\vspace{.1cm} $1$. What is the distribution of the hitting times of
the system when one considers sets of arbitrary small measure?
\vspace{.1cm}

$2$. How do return times oscillate around their average?
\vspace{.1cm}

In general, an answer to these questions involves a deep knowledge
of the system's chaotic features (expressed in terms of mixing
properties) combined with information on the measure of sets in
small scales. For the later it is usually enough the invariant
measure to satisfy some local equilibrium property.

The study of hitting and first return times and their connection
with hyperbolicity, speed of mixing and dimension theory achieved
many recent developments and became an important ingredient to
characterize the statistical properties of dynamical systems. Some
of the first attempts to understand possible phenomena in a
hyperbolic setting include works by Pitskel~\cite{Pit91}, for
transitive Markov chains, by Hirata~\cite{Hi93} for Axiom A
diffeomorphisms and by Collet~\cite{Col96} in the context of
expanding maps of the interval with a spectral gap, where they
proved that the distribution of hitting and return times is
asymptotically exponential. The extension of these results beyond
the scope of uniform hyperbolicity is as a challenge and gained
special attention in the few past years following the recent
interest and developments on the thermodynamical formalism for
nonuniformly hyperbolic transformations. Nevertheless, and despite
the effort of many authors, a general picture is still far from
complete. Some recent contributions among many others include works
by Collet, Galves~\cite{CG93} on maps of the interval with
indifferent fixed points; Galves, Schmitt \cite{GaSc97} for systems
satisfying a $\varphi$-mixing condition; Haydn~\cite{Hay99a,Hay99b}
for Julia sets of rational maps; Hirata, Saussol,
Vaienti~\cite{HSV99} on Poissonian laws for multiple return time
statistics for some non hyperbolic maps of the interval;
Paccaut~\cite{Pac00} on weighted piecewise expanding maps of the
interval; Abadi~\cite{Aba01} for $\alpha$-mixing stationary
processes; Collet~\cite{Col01} on Poisson laws for non-uniformly
hyperbolic maps that admit a Young tower; Bruin, Saussol,
Troubetzkoy, Vaienti \cite{BSTV03} on the return time statistics via
inducing; Saussol, Troubetzkoy, Vaienti~\cite{STV03} on the relation
between recurrence, dimension and Lyapunov exponents; and Bruin,
Todd~\cite{BT07b} on interval maps with positive Lyapunov exponent,
just to mention some of the most recent advances.
In particular, there are several evidences of an intricate relation
between the asymptotic behavior of the distribution of hitting times
and the memory loss of the system expressed in terms of good mixing
properties.
Finally, return time statistics are also useful to study the
fluctuations of the return times in the Ornstein-Weiss formula for
the metric entropy (see \cite{OW93}). In fact, it follows from the
work of Saussol~\cite{Sau01} that there is a strong connection
between fluctuations of the return times and the fluctuations in the
Shannon-MacMillan-Breiman's theorem, which are easier to study if
the invariant measure has a Gibbs property.

Here we deal with a robust class of multidimensional non-uniformly
hyperbolic transformations introduced by Oliveira and Viana in
\cite{OV07}, that contain maps obtained as deformations by isotopy
from expanding transformations as the ones considered in
\cite[Appendix]{ABV00}. Despite the existence of a (possibly
non-generating) Markov partition many difficulties arise from the
multidimensional character of the system and the absence of bounded
distortion. Oliveira and Viana developed a thermodynamical formalism
to show that there is a unique equilibrium state for every H\"older
potential with small variation and, moreover, that is satisfies a
weak Gibbs property. Our starting point to study the asymptotics of
hitting and return times as statistical properties of the
equilibrium states is to estimate the decay of correlations. That
is, the velocity at which
$$
C_n(\Phi,\Psi)
    =\Big| \int \Psi (\Phi \circ T^n) \, d\mu - \int \Phi  \,d\mu  \int \Psi \,d\mu\Big|
$$
tends to zero as $n\to \infty$ for any observable $\Psi$ and $\Psi$
in some reasonable space $V_\theta$ of functions with essential
bounded variation, that contain H\"older continuous observables and
characteristic functions at cylinders of the partitions generated
dynamically by the dynamics. For a related of potentials, we show
that the Ruelle-Perron-Frobenius operator has a spectral gap in
$V_\theta$ and deduce exponential decay of correlations and the
central limit theorem. This mixing property is enough to obtain
exponential return time statistics. Finally, a weak Gibbs property
for the equilibrium states allow us to relate return time statistics
with fluctuations of the measure of elements of dynamical partitions
and return times around the metric entropy given by
Shannon-MacMillan-Breiman and Ornstein-Weiss formulas and to obtain
log-normal fluctuations of the return times. Let us point out that
exponential return time statistics and log-normal fluctuations of
return times are robust in this nonuniformly hyperbolic setting.
Our approach to obtain exponential return time statistics, although
similar in flavor with \cite{Pac00} and \cite{GaSc97}, faces
distinct difficulties that arise from the non hyperbolicity of the
system. While the cornerstone in \cite{Pac00} was the non-Markov
property of partitions defining piecewise expanding maps of the
interval, our main difficulties lie in the lack of bounded
distortion and that the diameter of cylinders in the Markov
partition may not decrease to zero. For the sake of completeness,
let us mention that Arbieto and Matheus~\cite{AM} proved exponential
decay of correlations the equilibrium states constructed in
\cite{OV07} but considered different classes of potentials and
observables.

A very interesting question is to obtain exponential return time
statistics to balls instead of cylinders. Although this is possible
to obtain for several cases in the one-dimensional setting, the
study of return time statistics to balls presents itself as a major
difficulty in this higher dimension setting. If such a result could
be obtained it is most likely that our results can be extended to
the more general context of \cite{VV1}, where \cite{OV07} is
generalized and no Markov partition is assumed to exist.

This article is organized as follows. In Section~\ref{sec.statement}
we state our main results. In Section~\ref{sec.preliminaries} we
introduce some notations and tools that will be used in the
remaining of the paper. We prove the quasi-compactness of the
Ruelle-Perron-Frobenius operator in $V_\theta$ in
Section~\ref{sec.spectral.gap}. Finally, in
Sections~\ref{sec.exponentialdistribution} and
~\ref{sec.fluctuations} we deal with the asymptotic distribution of
hitting times and fluctuations of the return times.

%%%%%%%%%%%%%%%%%%%%%%%%%%%%%%%%%%%%%%%%%%%%%%%%%%%%%%%%%%%%%%%%%%%
\section{Statement of the Results}\label{sec.statement}

%%%%%%%%%%%%%%%%%%%%%%%%%%%%%%%%%%%%%%%%%%%%%%%%%%%%%%%%%%%%%%%%%%%
\subsection{Setting}

Let $M$ denote a compact Riemannian manifold. We say that a set $E
\subset M$ has \emph{finite inner diameter} if there exists $L>0$
such that any two points in $E$ may be joined by a curve of length
less than $L$ contained in $E$. Throughout, $f: M \to M$ will denote
a $C^1$ local diffeomorphism satisfying conditions (H1) and (H2)
below:

\begin{itemize}
\item[(H1)] There are $p \ge 1$, $q \ge 0$, and a family
$\cQ=\{Q_1\,, \dots, Q_q\,, Q_{q+1}, \dots, Q_{q+p}\}$ of pairwise
disjoint open sets whose closures have finite inner diameter and
cover the whole $M$, such that
    \begin{itemize}
    \item every $f|\bar Q_i$ is a homeomorphism onto its image
    \item if $f(Q_i) \cap Q_j \neq \emptyset$ then $f(Q_i) \supset
            Q_j$ and, hence, $f(\bar{Q}_i) \supset \bar{Q}_j$
    \item there is $N\ge 1$ such that $f^N(Q_i)=M$ for every $i$.
    \end{itemize}
\item[(H2)] There are positive constants $\sigma>1$ and $L>0$ such that
\begin{itemize}
     \item $\|Df(x)^{-1}\|\leq \sigma^{-1}$ for every
            $x\in {Q}_{q+1} \cup \dots \cup {Q}_{q+p}$
     \item $\|Df(x)^{-1}\| \leq L$ for every
       $x\in {Q}_1 \cup \cdots \cup {Q}_q$,
\end{itemize}
where $L$ is assumed to be close to be $1$ in order to satisfy the
relations \eqref{eq.relation.L}.
\end{itemize}

These conditions, which roughly mean that the transformation is
expanding in some (topologically) large region of $M$ but may admit
contracting behavior in the complement, are satisfied a large class
of local diffeomorphisms obtained by a local bifurcation of an
expanding transformation.

We will denote by $\phi: M \to \mathbb R$ an $\al$-H\"older
continuous potential with small oscillation, in the sense that it
satisfies
\begin{itemize}
\item[(H3a)] $\sup\phi-\inf\phi < \log \deg(f) - \log q$.
\end{itemize}
and condition (H3b) stated at the beginning of
Subsection~\ref{subsec.spectral.gap}. These conditions are clearly
satisfied by an open class of potentials containing the constant
ones. Note that (H1), (H2) and (H3a) are the assumptions in
\cite{OV07}.

%%%%%%%%%%%%%%%%%%%%%%%%%%%%%%%%%%%%%%%%%%%%%%%%%%%%%%%%%%%%%%%%%%%%%%%%%
\subsection{Equilibrium states and conformal measures}

Given a continuous transformation $f:M\to M$ and a continuous
potential $\phi:M \to \mathbb R$, an invariant probability measure
$\mu$ is an \emph{equilibrium state} for $f$ with respect to $\phi$
if it attains the supremum
$$
\Ptop(f,\phi)
    = \sup\Big\{h_\eta(f)+\int \phi\,d\eta :  \eta \; \text{is $f$-invariant}\Big\}
$$
given by the variational principle for the pressure (see e.g.
\cite{Wa82}). The \emph{Ruelle-Perron-Frobenius operator} $\cL_\phi$
is the linear operator that acts in the space $C(M)$ of continuous
functions by
$$
\cL_\phi g (x)= \sum_{f(y)=x} e^{\phi(y)} g(y).
$$
The action of the dual operator $\cL_\phi^*$ on the space $\mathcal
M (M)$ of probability measures is given by $\int g \,d\cL^*_\phi
\nu= \int \cL_\phi g\, d\nu$ for every $g \in C(M)$. We say that a
measure $\nu$ is \emph{conformal} if there exists a strictly
positive function $J_\nu f$ (Jacobian of $\nu$ with respect to $f$)
such that
 $
 \nu(f(A))= \int J_\nu f \, d\nu
 $
for every measurable set $A$ such that $f\mid A$ is injective. It is
nor difficult to see that any eigenmeasure $\nu$ for $\cL_\phi^*$
associated to a positive eigenvalue $\lambda$ is a conformal measure
for $f$ and that $J_\nu f=\lambda e^{-\phi}$.

A sequence of positive integers $(n_k)_{k\geq1}$ is
\emph{non-lacunary} if it is increasing and $n_{k+1}/n_k\to 1$ when
$k$ tends to infinity. Consider the partition
$\cQ^{(n)}=\bigvee_{j=0}^{n-1} f^{-j}\cQ$ and let $Q_n(x)$ be the
element of $\cQn$ that contains $x$.

\begin{definition} A probability measure $\nu$ is a
\emph{non-lacunary Gibbs measure} if there is $K>0$ so that, for
$\nu$-almost every $x \in M$ there exists some non-lacunary sequence
$(n_k)_{k\geq 1}$, depending on $x$, such that
\begin{equation*}\label{eq. Gibbs at hyperbolic times}
K^{-1} \leq \frac{\nu(Q_{n_k}(x))}
                 {\exp(-P\,n_k + S_{n_k}\phi(y))}\leq K
\end{equation*}
for every $y\in Q_{n_k}(x)$ and every $k\ge 1$.
\end{definition}

Finally, we recall the notion of \emph{hyperbolic time} introduced
in \cite{ABV00}. We say that $n$ is a \emph{$c$-hyperbolic time} for
$x \in M$ if
\begin{equation}\label{eq. c-hyperbolic times}
\prod_{j=n-k}^{n-1} \| Df(f^j(x))^{-1}\| < e^{-ck}
 \quad \text{for every} \; 1\leq k\leq n.
\end{equation}
Since the constant $c$ will be fixed below, according to
\eqref{eq.relation.expansion}, we will refer to these simply as
hyperbolic times. We say that $Q_n \in \cQn$ is an \emph{hyperbolic
cylinder} if $n$ is a hyperbolic time for {\bf every} point in
$Q_n$. We denote by $\cQn_h$ the set of hyperbolic cylinders of
order $n$ and by $H$ the set of points that belong to the closure of
infinitely many hyperbolic cylinders. We say that a probability
measure $\nu$ is \emph{expanding} if it satisfies $\nu(H)=1$. The
next theorem summarizes the results by Oliveira, Viana ~\cite{OV07}
in this non-uniformly hyperbolic setting:

\begin{theorem}\label{t.OV2}~\cite{OV07}
Assume that $f$ is a $C^1$ local diffeomorphism such that (H1) and
(H2) hold and $\phi:M \to \mathbb R$ is an H\"older continuous
potential that satisfies (H3a). Then there exists an expanding
conformal measure $\nu$ such that $\cL^*_\phi \nu=\la \nu$, where
$\la$ denotes the spectral radius of the operator $\cL_\phi$ in the
space $C(M)$. Moreover, there is a unique equilibrium state $\mu$
for $f$ with respect to $\phi$, it is absolutely continuous with
respect to $\nu$ and it is a non-lacunary Gibbs measure.
\end{theorem}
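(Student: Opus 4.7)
The plan is to follow the classical thermodynamic formalism strategy, but replace the global bounded distortion estimates that are available for uniformly expanding maps with estimates localized at hyperbolic times, exploiting the small oscillation assumption (H3a) to control the frequency of the expanding behavior.

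First I would build the conformal measure by a Schauder--Tychonoff fixed point argument. Set
\[
G(\nu) = \frac{\cL_\phi^*\nu}{\cL_\phi^*\nu(M)},
\]
which is a continuous self-map of the compact convex set of Borel probability measures on $M$ in the weak-$*$ topology. Any fixed point $\nu$ satisfies $\cL_\phi^*\nu=\la\nu$ with $\la=\cL_\phi^*\nu(M)>0$; a direct computation together with the variational principle identifies $\la$ with the spectral radius of $\cL_\phi$ acting on $C(M)$ and with $e^{\Ptop(f,\phi)}$. The hypothesis (H1) on the images $f(Q_i)\supset Q_j$ supplies the topological mixing needed for positivity of $\nu$ on open sets.

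The crucial step is to prove that $\nu$ is expanding, i.e.\ $\nu(H)=1$. For this I would combine the Pliss-type hyperbolic-time lemma from \cite{ABV00} with a measure-theoretic estimate on trajectories that spend too large a fraction of time in the (possibly) contracting cylinders $Q_1,\dots,Q_q$. The gap condition (H3a), $\sup\phi-\inf\phi<\log\deg(f)-\log q$, is exactly what ensures that, when one sums $e^{S_n\phi}$ over the pre-images with prescribed itinerary pattern, the contribution of itineraries visiting $Q_1,\dots,Q_q$ with frequency above some threshold $\te$ is negligible relative to $\la^n$. A Borel--Cantelli argument then gives $\nu$-a.e.\ positive frequency of hyperbolic times, hence $\nu(H)=1$, and moreover yields a non-lacunary sequence of hyperbolic times at $\nu$-a.e.\ point.

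Next I would construct the equilibrium state as $\mu=h\nu$ with $h$ a fixed point of $\cL_\phi/\la$. Take Cesàro averages $h_n=\frac1n\sum_{j=0}^{n-1}\la^{-j}\cL_\phi^j\mathbf{1}$; the bounded distortion available at hyperbolic pre-images, together with the conformal property $J_\nu f=\la e^{-\phi}$, forces the family $\{h_n\}$ to be uniformly bounded above and bounded away from zero on large compact sets, and equicontinuous restricted to hyperbolic cylinders. Any weak-$*$ accumulation point $h$ is $\cL_\phi$-invariant (up to $\la$), and $\mu=h\nu$ is $f$-invariant. Using the non-lacunary Gibbs estimate
\[
K^{-1}\le\frac{\nu(Q_{n_k}(x))}{\exp(-P n_k+S_{n_k}\phi(y))}\le K,
\]
which follows directly from the conformal property and bounded distortion at hyperbolic times, one applies the entropy formula of Shannon--McMillan--Breiman along the non-lacunary subsequence of hyperbolic times to obtain $h_\mu(f)+\int\phi\,d\mu=\log\la=\Ptop(f,\phi)$. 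Uniqueness of the equilibrium state is then a standard consequence of the weak Gibbs property combined with ergodicity of $\mu$ (which itself follows from the topological mixing in (H1) plus the spectral identification).

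The main obstacle is the absence of global bounded distortion: every estimate that in the uniformly expanding case is a uniform inequality becomes, in our setting, an inequality valid only on the subcollection $\cQn_h$ of hyperbolic cylinders, and the price of restricting to $\cQn_h$ must be paid by a measure-theoretic argument using (H3a). Making this trade-off quantitative — in particular proving that the hyperbolic times of $\nu$-a.e.\ point form a \emph{non-lacunary} (not merely positive-density) sequence — is the technical heart of the argument, and is what distinguishes this result from the uniformly hyperbolic prototype.
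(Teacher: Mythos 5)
This statement is Theorem~\ref{t.OV2}, which the paper explicitly attributes to Oliveira and Viana \cite{OV07} and does not reprove, so there is no in-paper proof to compare your attempt against. Your outline does follow the broad Oliveira--Viana strategy: a Schauder fixed-point argument for the conformal measure, a frequency-of-hyperbolic-times estimate driven by (H3a) to obtain $\nu(H)=1$, Ces\`aro averages of $\la^{-n}\cL_\phi^n\mathbf 1$ for the invariant density, and non-lacunary Gibbs bounds at hyperbolic times.

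Two steps are nonetheless asserted too quickly. The identification of the Schauder eigenvalue $\la$ with both the spectral radius of $\cL_\phi$ on $C(M)$ and with $e^{\Ptop(f,\phi)}$ is not a direct computation; the inequality $h_\eta(f)+\int\phi\,d\eta\le\log\la$ for an arbitrary invariant $\eta$ already requires a Rokhlin-type Jacobian estimate combined with the counting argument underlying (H3a), and this is one of the nontrivial parts of \cite{OV07}. Uniqueness is likewise not a routine consequence of the weak Gibbs property and ergodicity: the Gibbs bound here is only non-lacunary (valid along an $x$-dependent subsequence of times), the partition $\cQ$ is not a priori generating, and cylinders need not shrink, so one must first show that any equilibrium state is itself expanding and absolutely continuous with respect to $\nu$ before ergodicity can be brought to bear. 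That reduction is the technical core separating this result from the uniformly hyperbolic prototype, and your sketch does not supply it.
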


Throughout, $\mu$ and $\nu$ will always denote the probability
measures given above.

%%%%%%%%%%%%%%%%%%%%%%%%%%%%%%%%%%%%%%%%%%%%%%%%%%%%%%%%%%%%%%%%%%%%%%%%%
\subsection{Statement of the main results}

First we introduce some necessary concepts. We consider a one
parameter functional space $V_\theta$, introduced in \cite{Paccaut},
using the reference partition $\cQ$ and the conformal measure $\nu$.
Given $\theta>0$ and $g \in L^\infty(\nu)$ define the
$\theta$-variation of $g$ (with respect to $\cQ$ and $\nu$) by
$$
\var(g)
    = \sum_{n \geq 1} \;\theta^n \sum_{Q_n \in \cQ^{(n)}}
    e^{S_n \phi(Q_n)} \ov\osc(g, \cQ_n),
$$
where $S_n\phi(Q_n)=\sup\{\sum_{j=0}^{n-1} \phi(f^j(x)): x \in
Q_n\}$ and $\ov\osc(g, \cQ_n)$ is the $\nu$-essential variation of
$g$ in $Q_n$ defined in Subsection~\ref{subsec.essential}. Let
$V_\theta$ be the space of functions with essential $\theta$-bounded
variation:
$$
V_\theta = \{ g \in L^\infty(\nu) : \|g\|_\theta<\infty\},
$$
where $\|\cdot\|_\theta=\|\cdot\|_\infty+\var(\cdot)$.
Paccaut~\cite{Paccaut} proved that $V_\theta$ is a Banach space. We
will say that $\mu$ satisfies \emph{exponential decay of
correlations} if there exist $C>0$ and $\xi \in (0,1)$ such that
$$
C_n(\Phi,\Psi)
 :=
\Big|
    \int \Phi (\Psi \circ f^n) d\mu - \int \Phi d\mu \int \Psi d\mu
\Big|
 \leq C \xi^n \|\Phi\|_\theta \|\Psi\|_{L^1(\nu)}.
$$
for every $\Psi \in L^1(\nu)$ and every $\Phi \in V_\theta$. A
linear operator $L$ in a Banach space $B$ is \emph{quasi-compact} if
there exists an $L$-invariant decomposition $B=B_0\oplus B_1$ of the
Banach space such that $B_0$ is finite dimensional and the spectrum
of $L|_{B_0}$ is a finite number of eigenvalues of absolute value
$r(L)$, and $r(L|_{B_1})<r(L)$. If $\dim(B_0)=1$ then we say that
$L$ has a \emph{spectral gap}. Our first main result is the
following:

\begin{maintheorem}\label{thm.spectralgap}
There exists a positive $\theta$ such that $\cL_\phi$ is
quasi-compact and has a spectral gap in the space $V_\theta$.
Moreover, $\mu$ has exponential decay of correlations in $V_\theta$
and the density $d\mu/d\nu$ belongs to $V_\theta$.
\end{maintheorem}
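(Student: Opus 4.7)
The plan is to prove quasi-compactness of $\cL_\phi$ on $V_\theta$ via the classical Hennion / Ionescu-Tulcea-Marinescu scheme. Concretely, I would establish a Lasota-Yorke (Doeblin-Fortet) type inequality of the form
\begin{equation*}
\var(\cL_\phi^n g)\;\le\; A\,\beta^n\,\|g\|_\theta \;+\; B_n\,\|g\|_{L^1(\nu)}
\end{equation*}
with some $\beta<\lambda$, where $\lambda$ is the spectral radius of $\cL_\phi$ on $C(M)$ given by Theorem~\ref{t.OV2}. Control of $\|\cL_\phi^n g\|_\infty$ follows from $\cL_\phi^*\nu=\lambda\nu$ and boundedness of $h=d\mu/d\nu$, so only $\var(\cL_\phi^n g)$ needs work. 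Combined with the compactness of the embedding $V_\theta\hookrightarrow L^1(\nu)$ proved by Paccaut in~\cite{Paccaut}, Hennion's theorem then yields quasi-compactness of $\cL_\phi$ on $V_\theta$.

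The heart of the argument is the variation estimate. Expanding $\cL_\phi^n g(x)=\sum_{f^n(y)=x} e^{S_n\phi(y)}g(y)$ and unfolding the definition of $\var$, each weight $e^{S_n\phi(Q_n)}\ov\osc(\cL_\phi^n g, Q_n)$ is bounded by a sum over $(n{+}k)$-cylinders $Q_{n+k}$ of terms $e^{S_{n+k}\phi(Q_{n+k})}\,\ov\osc(g, Q_{n+k})$, indexed by the inverse branches of $f^n$. I would split these branches into two groups: (i) those for which some $m\le n$ is a hyperbolic time for a point (and hence, thanks to the definition of hyperbolic cylinder and~\eqref{eq. c-hyperbolic times}, for every point) of the corresponding image; and (ii) those avoiding hyperbolic times. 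For (i), hyperbolic times provide bounded distortion and contraction at rate $e^{-c}$, so after re-indexing and choosing $\theta$ small the contribution reorganizes into a term of order $(\theta\,e^{-c\alpha})^n\,\var(g)$, which is the geometric Lasota-Yorke term. For (ii), the crucial input is assumption (H3a) together with the Pliss-lemma counting of hyperbolic times developed in~\cite{ABV00,OV07}: the total measure of orbits avoiding hyperbolic times up to time $n$ decays exponentially strictly faster than $\lambda^n$. Using conformality of $\nu$, the assumption that $L$ is close to $1$, and the still-to-be-quantified condition (H3b) on the oscillation of $\phi$, this contribution can be absorbed into the $\|g\|_{L^1(\nu)}$ term. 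Tuning $\theta$ so that both the geometric and compact terms close is where I expect the main technical obstacle to lie.

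Upgrading quasi-compactness to a spectral gap requires identifying the peripheral spectrum with the simple eigenvalue $\lambda$. The density $h=d\mu/d\nu$ given by Theorem~\ref{t.OV2} is a positive $\cL_\phi$-eigenfunction at $\lambda$, and I would verify $h\in V_\theta$ by writing $h=\lim_n \lambda^{-n}\cL_\phi^n \mathbf 1$ and controlling its $\theta$-variation via the non-lacunary Gibbs inequality, which says $h$ is uniformly comparable on each $Q_{n_k}(x)$ to a weighted average of $e^{S_{n_k}\phi}$. For the peripheral simplicity, any other eigenvalue of modulus $\lambda$ or nontrivial Jordan block would, after Cesaro averaging and using positivity, produce a second $f$-invariant measure absolutely continuous with respect to $\nu$ of maximal pressure; this contradicts the uniqueness of the equilibrium state in Theorem~\ref{t.OV2}. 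Topological mixing of $f$, built into condition~(H1), removes residual complications with roots of unity.

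Finally, with the spectral gap in hand, the standard decomposition $\lambda^{-n}\cL_\phi^n = P + R^n$, where $Pg=h\int g\,d\nu$ is the rank-one spectral projection and $\|R^n\|_\theta\le C\xi^n$ for some $\xi<1$, gives exponential decay of correlations for free: using the dual identity $\lambda^{-n}\int \Psi\,\cL_\phi^n(\Phi h)\,d\nu=\int \Phi(\Psi\circ f^n)\,d\mu$ one obtains
\begin{equation*}
\int \Phi(\Psi\circ f^n)\,d\mu - \int \Phi\,d\mu\int \Psi\,d\mu \;=\; \int \Psi\cdot R^n(\Phi h)\,d\nu,
\end{equation*}
bounded by $C\xi^n\,\|\Phi\|_\theta\,\|\Psi\|_{L^1(\nu)}$ after noting that multiplication by $h$ is continuous on $V_\theta$ since $h\in V_\theta$ is bounded away from zero on the support of $\mu$.
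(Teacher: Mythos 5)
Your high-level scheme agrees with the paper's: the core is a Lasota--Yorke inequality for $\cL_\phi$ on $V_\theta$, built around the dichotomy between cylinders with positive frequency of hyperbolic times (giving exponential contraction of diameters, hence of oscillations of H\"older data) and the exponentially small family $B(n)$ of ``bad'' cylinders controlled by (H3a). Your decomposition in (i)/(ii) is essentially the paper's split over $Q_n\notin B(n)$ versus $Q_n\in B(n)$, and your condition (H3b) is exactly the extra small-oscillation hypothesis the paper needs to tune $\theta$.

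Where you differ from the paper is in how quasi-compactness is extracted from the Lasota--Yorke estimate. You invoke Hennion's theorem together with compactness of the embedding $V_\theta\hookrightarrow L^1(\nu)$; the paper instead constructs explicit finite-rank approximations $A_n(g)=\la^{-n}\cL^n(\mathbb E_\nu(g\mid\cQ^{(n)}))$, proves $\|\la^{-n}\cL^n-A_n\|_\theta\le C\xi_1^n$, and applies Nussbaum's formula for the essential spectral radius. Both are standard and both work; the Nussbaum route is slightly more self-contained here because the required exponential approximation estimate is a direct by-product of the same computations used in the Lasota--Yorke inequality, whereas the Hennion route asks you to verify the compact inclusion separately. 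As a side benefit, the paper's route immediately identifies $r_\theta(\cL_\phi)=r(\cL_\phi)$ by comparing $\|\cdot\|_\theta$ and $\|\cdot\|_{L^1(\nu)}$ on the constant function $1$, which you implicitly assume.

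One genuine weak spot is your treatment of the peripheral spectrum. For the \emph{simplicity} of the eigenvalue $1$ and for showing $h=d\mu/d\nu\in V_\theta$, your idea (Ces\`aro averaging produces a positive eigenfunction, and uniqueness of the equilibrium state pins it down) matches the paper and is fine. But for \emph{ruling out other eigenvalues $z$ with $|z|=1$, $z\ne 1$}, the appeal to ``a second equilibrium state of maximal pressure'' does not directly work: a complex eigenfunction at $z=e^{2\pi i p/q}$ does not by itself produce a second $\cL_\phi^*$-eigenmeasure or a second invariant density, only a cyclic decomposition. The paper closes this gap by a different mechanism: it shows $\mu$ is \emph{exact} (Lemma~\ref{l.exactness}) and then, writing any putative eigenfunction as $h'=h\psi$, observes that $\psi$ is a $U^*$-eigenfunction of modulus one for the Koopman operator, hence $\psi$ is measurable with respect to the tail $\sigma$-algebra $\bigcap_n f^{-n}\cB$ and therefore constant, forcing $z=1$. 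You gesture at ``topological mixing removes roots of unity'', which is the right intuition, but as written your argument for $z\neq 1$ is not complete and should be replaced by (or made precise as) the exactness/Koopman argument.

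The final step — $\la^{-n}\cL_\phi^n=\Pi_1+R^n$ with $\Pi_1 g=h\int g\,d\nu$, duality, and continuity of multiplication by $h$ and $1/h$ on $V_\theta$ — is the same as the paper's and is correct.
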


Fix $\theta$ as above. The previous result implies that the
correlation functions are summable. Consequently, the
\emph{asymptotic variance} $\si^2(\Phi)$ defined by
$$
 \si^2(\Phi)=\|\Phi\|_{L^1(\mu)}+2\sum_{j =1}^\infty \int \Phi \, (\Phi \circ f^j)\,d\mu
$$
is well defined for every $\Phi\in V_\theta$. Standard computations
involving the spectral gap property in the previous theorem (see e.g
\cite{You98}) are enough to obtain the Central Limit Theorem:

\begin{maincorollary}\label{c.CLT}
Assume that $\Phi \in V_\theta$ and that the asymptotic variance
$\si^2(\Phi)$ is nonzero. Then the distribution of the random
variables
$$
\frac{1}{\si(\Phi) \sqrt{n}}
    \sum_{j=0}^{n-1} \Big( \Phi\circ f^j-\int \Phi \,d\mu \Big)
$$
converges to the normal distribution $\cN(0,1)$ as $n$ tends to
infinity. Moreover, $\si(\Phi) = 0$ if and only if there exists a
measurable function $\tilde \Phi$ such that $\Phi=\tilde \Phi \circ
f -\tilde \Phi$.
\end{maincorollary}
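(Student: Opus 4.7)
The plan is to deduce the Central Limit Theorem from the spectral gap property of $\cL_\phi$ in $V_\theta$ via the Nagaev--Guivarc'h perturbative method. Without loss of generality we may replace $\Phi$ by $\Phi-\int\Phi\,d\mu$ and so assume $\int\Phi\,d\mu=0$. Let $h=d\mu/d\nu$, which belongs to $V_\theta$ by Theorem~\ref{thm.spectralgap}, and denote by $\hat\cL$ the operator normalized so that $\hat\cL 1=1$ and $\int \hat\cL g\,d\mu=\int g\,d\mu$ (the transfer operator associated with $f$ and $\mu$, obtained from $\cL_\phi$ after conjugation by $h$ and rescaling by $\la$). The key step is to introduce, for each $t\in\real$, the perturbed operator
$$
\hat\cL_t(g)=\hat\cL(e^{it\Phi}\,g),
$$
and to check that the family $\{\hat\cL_t\}$ inherits nice spectral behavior from $\hat\cL$ on $V_\theta$.

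The first thing I would verify is that $V_\theta$ is a Banach algebra (or at least that multiplication by $e^{it\Phi}$ maps $V_\theta$ continuously into itself, with an analytic dependence on $t$): this reduces to estimating $\ov\osc(e^{it\Phi}g,\cQ_n)$ in terms of $\ov\osc(g,\cQ_n)$ and $\ov\osc(\Phi,\cQ_n)$ on each cylinder, together with the uniform bound $|e^{it\Phi}|\le 1$. Granting this, the map $t\mapsto \hat\cL_t$ is analytic from a neighborhood of $0\in\complex$ to the Banach space of bounded operators on $V_\theta$, with $\hat\cL_0=\hat\cL$. By Kato's analytic perturbation theory, the spectral gap of $\hat\cL$ (given by Theorem~\ref{thm.spectralgap}) is preserved under small perturbations, producing, for $|t|$ small, a simple analytic eigenvalue $\la(t)$ with $\la(0)=1$, an associated eigenprojection $\Pi(t)$ with $\Pi(0)g=\int g\,d\mu$, and a uniform bound $\|\hat\cL_t^n-\la(t)^n\Pi(t)\|_\theta\le C\xi_0^n$ for some $\xi_0\in(0,1)$.

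Next I would compute the derivatives of $\la(t)$ at $t=0$ by differentiating the eigenvalue equation $\hat\cL_t v(t)=\la(t)v(t)$ and integrating against $\mu$. A direct calculation yields $\la'(0)=i\int\Phi\,d\mu=0$ and
$$
\la''(0)=-\Big(\int \Phi^2\,d\mu+2\sum_{j\ge 1}\int \Phi\,(\Phi\circ f^j)\,d\mu\Big)=-\si^2(\Phi),
$$
where the interchange of integral and summation uses the exponential decay of correlations. For the characteristic function of the normalized Birkhoff sum $S_n\Phi=\sum_{j=0}^{n-1}\Phi\circ f^j$, the identity
$$
\int e^{itS_n\Phi/\sqrt n}\,d\mu=\int \hat\cL_{t/\sqrt n}^n 1\,d\mu
$$
combined with the spectral decomposition above gives
$$
\int e^{itS_n\Phi/\sqrt n}\,d\mu = \la(t/\sqrt n)^n\int\Pi(t/\sqrt n)1\,d\mu + O(\xi_0^n).
$$
A Taylor expansion $\log\la(t/\sqrt n)=-\si^2(\Phi)t^2/(2n)+o(1/n)$ shows that the right-hand side converges to $e^{-\si^2(\Phi)t^2/2}$, and Lévy's continuity theorem yields convergence in distribution to $\cN(0,1)$.

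It remains to handle the degeneracy statement. If $\Phi=\tilde\Phi\circ f-\tilde\Phi$ for some $\tilde\Phi\in L^2(\mu)$, then $S_n\Phi=\tilde\Phi\circ f^n-\tilde\Phi$ is bounded in $L^2(\mu)$, whence $\si^2(\Phi)=\lim_n \frac1n \int (S_n\Phi)^2\,d\mu=0$. Conversely, assume $\si^2(\Phi)=0$. Using the identity
$$
\int (S_n\Phi)^2\,d\mu=n\int\Phi^2\,d\mu+2\sum_{j=1}^{n-1}(n-j)\int\Phi(\Phi\circ f^j)\,d\mu
$$
together with the absolute summability provided by exponential mixing, one obtains that $\int(S_n\Phi)^2\,d\mu$ stays bounded in $n$. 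The main obstacle is then to solve the cohomological equation: I would produce $\tilde\Phi$ as the limit of $-\sum_{j\ge 0}\hat\cL^j\Phi$, which converges in $V_\theta$ thanks to the spectral gap on the codimension-one subspace $\{g\in V_\theta:\int g\,d\mu=0\}$, and then verify directly that $\Phi=\tilde\Phi\circ f-\tilde\Phi$ $\mu$-almost everywhere by checking that both sides share the same image under $\hat\cL$ and have the same integral against any test function. The hard part of the whole argument is the verification that multiplication by $e^{it\Phi}$ is a bounded analytic perturbation on $V_\theta$, because $V_\theta$ is defined in terms of essential oscillations weighted by $e^{S_n\phi(Q_n)}$ and one must control these oscillations under products; once this algebraic structure is established, the rest is standard machinery.
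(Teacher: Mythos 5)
The paper itself gives no proof of this corollary; it simply observes that "standard computations involving the spectral gap property (see e.g.\ \cite{You98})" yield the CLT, so there is no proof to compare against line by line. Your proposal supplies the Nagaev--Guivarc'h perturbative argument, which is precisely the standard route the paper has in mind, and the CLT part of your sketch is essentially complete. The technical point you flag as the hard step -- that multiplication by $e^{it\Phi}$ is a bounded, analytically varying perturbation on $V_\theta$ -- is in fact immediate from Lemma~\ref{l.oscillation.inequality}: one gets $\ov\osc(e^{it\Phi}g,E)\le |t|\,\ov\osc(\Phi,E)\,\|g\|_\infty+\ov\osc(g,E)$, whence $\|e^{it\Phi}g\|_\theta\le (1+|t|\,\text{var}_\theta(\Phi))\,\|g\|_\theta$, and the same product estimate shows $V_\theta$ is a Banach algebra (so $t\mapsto e^{it\Phi}$ is entire in the multiplier norm). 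This is a non-gap.

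The degeneracy statement is where your sketch has a real flaw. First, a small one: the candidate primitive should be $\tilde\Phi=\sum_{j\ge 1}\hat\cL^j\Phi$ (the sum must start at $j=1$; your $-\sum_{j\ge 0}\hat\cL^j\Phi$ satisfies $\Phi=\hat\cL\tilde\Phi-\tilde\Phi$, which is not a coboundary relation). More seriously, the verification strategy "check that both sides have the same image under $\hat\cL$" does not establish $\Phi=\tilde\Phi\circ f-\tilde\Phi$, because $\hat\cL$ has a large kernel, so agreement under $\hat\cL$ is far from equality. The correct closing step is the martingale--coboundary decomposition: set $m:=\Phi-(\tilde\Phi\circ f-\tilde\Phi)$ and note that $\hat\cL m=0$, so that $(m\circ f^k)_k$ is a reverse martingale difference sequence; then $\si^2(\Phi)=\lim_n\frac1n\int(S_n\Phi)^2\,d\mu=\int m^2\,d\mu$ by orthogonality, and it is only here that $\si^2(\Phi)=0$ is used to conclude $m=0$ $\mu$-a.e., hence $\Phi=\tilde\Phi\circ f-\tilde\Phi$. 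In your version the hypothesis $\si^2(\Phi)=0$ never actually enters the argument in a forced way, which is the tell-tale sign that the verification step is missing something.
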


Our next aim is to study the asymptotics of hitting times. Given a
set $Q$ consider the \emph{hitting time} $\tau_{Q}$ defined as
$$
\tau_{Q}(x)=\inf\{k \geq 1 : f^k(x) \in Q\}.
$$
We study the deviation of the hitting times from its average given
by Kac's formula: $\int \tau_Q \,d\mu = \mu(Q)^{-1}$. Indeed, we use
that the potential $\phi$ belongs to $V_\theta$ (see
Lemma~\ref{l.Vtheta.contains.Holder}) and the good mixing properties
to show exponential return time statistics for the hitting time of
most cylinders.

\begin{maintheorem}\label{thm.first.entrance.distribution}
 There are positive constants $K$ and
$\beta$, and for every $\vep>0$ there exists $N_\vep \geq 1$ such
that the following holds: for any $n \geq N_\vep$ there exists a
subset $\cQn_\vep$ of $n$-cylinders satisfying
\begin{enumerate}
\item $\mu(\cup\{Q_n: Q_n \in \cQn_\vep\}) \geq 1- \vep$;
\item For every $Q \in \cQn_\vep$
    $$ \sup_{t\geq 0}
    \Big| \mu\Big( \tau_{Q} > \frac{t}{\mu(Q)} \Big) - e^{-t} \Big|
    \leq K e^{-\beta n}.
$$
\end{enumerate}
\end{maintheorem}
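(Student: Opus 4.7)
The plan is to adapt the Galves--Schmitt and Paccaut scheme to our non-uniformly hyperbolic setting. The two essential inputs are the exponential decay of correlations on $V_\theta$ (Theorem~\ref{thm.spectralgap}) and the weak Gibbs property of $\mu$ (Theorem~\ref{t.OV2}). The argument splits into two parts: first, identify a subclass $\cQn_\vep$ of cylinders of large total $\mu$-measure that admit no short periodic returns; second, for each such $Q$, prove the exponential law for $\tau_Q$ via a block-decomposition argument exploiting the spectral gap of $\cL_\phi$.

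For the first part, I would define $\cQn_\vep$ to consist of $Q\in\cQn$ satisfying
\[
\mu(Q\cap f^{-k}Q)\le C\,\mu(Q)^{1+\de}\quad\text{for every }1\le k\le \al n,
\]
for suitably chosen $\al,\de>0$. To estimate the mass of the complement, I would control $\sum_Q \mu(Q\cap f^{-k}Q)=\int \mathbf{1}_Q(\mathbf{1}_Q\circ f^k)\,d\mu$ through Theorem~\ref{thm.spectralgap}, since the norm $\|\mathbf{1}_Q\|_\theta$ is controlled by a geometric tail, and combine this with the weak Gibbs inequality. Hypothesis (H3a) provides precisely the entropy gap $\log\deg(f)-\log q-(\sup\phi-\inf\phi)>0$ that is needed to bound the mass of the short-periodic cylinders by a subexponential fraction of the total, yielding $\mu(\bigcup\{Q:Q\notin\cQn_\vep\})\le\vep$ for $n\ge N_\vep$.

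For the second part, fix $Q\in\cQn_\vep$, set $A_N=\{\tau_Q>N\}$, and observe that $A_N$ is measurable with respect to $\cQ^{(N)}$, so $\mathbf{1}_{A_N}\in V_\theta$ with norm controlled independently of $N$ (for suitable $\theta$) by a geometric tail of the $\var$ series. Organize the interval $[1,t/\mu(Q)]$ into $q$ blocks of length $l$ separated by gaps of length $g=\kappa n$, so that $q\sim t/[(l+g)\mu(Q)]$. Iteratively applying Theorem~\ref{thm.spectralgap} to strip off one block at a time from the event $E_q=\bigcap_{i=0}^{q-1}f^{-i(l+g)}A_l$ of no return in any block yields
\[
\mu(E_q)=\mu(A_l)^{q}+O\bigl(q\,\xi^{\kappa n}\,\|\mathbf{1}_{A_l}\|_\theta\bigr).
\]
Combined with $\mu(A_l)=1-l\mu(Q)+O(l^{2}\mu(Q)^{1+\de})$ coming from the no-short-return property of $Q$, and with a matching upper bound for $\mu(A_N)$ obtained by a dual decomposition, one gets
\[
\mu\bigl(\tau_Q>t/\mu(Q)\bigr)=(1-\mu(Q))^{t/\mu(Q)}\bigl(1+O(e^{-\be n})\bigr)=e^{-t}\bigl(1+O(e^{-\be n})\bigr)
\]
uniformly in $t\ge 0$.

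The main technical obstacle is the careful choice of $l,g,\de,\al$ to balance three competing errors: the correlation error $q\,\xi^{\kappa n}\|\mathbf{1}_{A_l}\|_\theta$ from Theorem~\ref{thm.spectralgap}, the short-return quadratic remainder encoded in the definition of $\cQn_\vep$, and the Taylor expansion error in $(1-\mu(Q))^{t/\mu(Q)}\approx e^{-t}$. Since $\mu(Q)\sim e^{-cn}$ for generic cylinders, the number of blocks $q$ may be exponentially large in $n$, so the gap $g=\kappa n$ must be chosen with $\kappa|\log\xi|$ strictly larger than this growth rate; this is where hypothesis (H3b) and the quantitative form of Theorem~\ref{thm.spectralgap} are essential. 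The uniformity in $t\ge 0$ is handled by treating the small-$t$ regime $t<g\mu(Q)$ separately via the trivial bound $|\mu(A_N)-e^{-t}|\le N\mu(Q)+t$.
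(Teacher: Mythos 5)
Your overall scheme follows the same Galves--Schmitt/Paccaut blueprint the paper uses (separate out cylinders with short self-returns, then run a block decomposition exploiting the spectral gap), but three of the specific claims do not survive scrutiny.

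The most serious gap is the assertion that $\mathbf{1}_{A_N}\in V_\theta$ has norm controlled independently of $N$ by a geometric tail. The variation series is $\var(\mathbf{1}_{A_N})=\sum_m\theta^m\sum_{Q_m\in\cQ^{(m)}}e^{S_m\phi(Q_m)}\,\ov\osc(\mathbf{1}_{A_N},Q_m)$, the only a priori control on the inner sum for $m$ below the order of the cylinders defining $A_N$ is $\#\cQ\,\deg(f)^{m}e^{m\sup\phi}$, and the first line of $(\star)$ forces $\theta\,e^{\log\deg(f)+\inf\phi}>1$, hence $\theta\deg(f)e^{\sup\phi}>1$. The truncated series therefore grows geometrically with $N$; there is no convergent geometric tail. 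The paper never takes the $\theta$-norm of $\mathbf{1}_{A_N}$: in Lemma~\ref{l.independence} it rewrites the correlation using the restricted operator $\la^{-1}\cL_{Q^c}$ and invokes the Lasota--Yorke inequality to show that $\var\big(\la^{-N}\cL_{Q^c}^N(h)\big)$ grows only \emph{polynomially} in $N$. That operator-theoretic device is what tames the error, and you would need to reproduce it; the naive norm estimate breaks.

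Second, the error budget $\mu(E_q)=\mu(A_l)^q+O\big(q\,\xi^{\kappa n}\|\mathbf{1}_{A_l}\|_\theta\big)$ accumulates the per-step error \emph{linearly} over the $q$ blocks, and $q$ scales like $t/\mu(Q)$, so the bound deteriorates as $t$ grows and cannot give the claimed uniformity over $t\geq 0$. The paper's Galves--Schmitt induction instead produces a geometrically damped sum, yielding $\big|g_Q(k\sqrt{\mu(Q)})-g_Q(\sqrt{\mu(Q)})^k\big|\le C\mu(Q)^{3/4}/\big(1-g_Q(\sqrt{\mu(Q)})\big)\lesssim\mu(Q)^{1/4}$, a bound independent of $k$ and hence of $t$. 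Your proposal flags only the small-$t$ regime; it is the large-$t$ regime that the linear accumulation fails to control.

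Third, your measure-theoretic criterion $\mu(Q\cap f^{-k}Q)\le C\mu(Q)^{1+\de}$ for $1\le k\le\al n$, to be verified by decay of correlations plus the weak Gibbs property, does not work for small $k$: decay of correlations only gives $\mu(Q\cap f^{-k}Q)\lesssim\mu(Q)^2+\xi^k\mu(Q)$, which for $k=O(1)$ is nothing like $\mu(Q)^{1+\de}$. The paper instead defines $\cQn_\vep$ by a purely \emph{combinatorial} condition (no self-intersection $f^j(Q)\cap Q\neq\emptyset$ for $1\le j\le\zeta n$) and bounds the complement by counting: a short-periodic $n$-cylinder is determined by its first $\zeta n$ symbols, so $\#(\cQn\setminus\cQn_\vep)\le(\#\cQ)^{\zeta n}$, which combined with $\nu(Q_n)\le e^{-\ga_1 n}$ from Proposition~\ref{p.decreasing.measure} gives the required mass bound. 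It is this Markov counting, not the entropy gap or the weak Gibbs property per se, that keeps the bad set small for every $k$ down to $k=1$.
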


This theorem asserts that the distribution of the hitting times is
asymptotically exponential and that the convergence is in a strong
sense for the majority of the cylinders. This is very useful to
study the fluctuations of the return times around the average in
Ornstein-Weiss's theorem. Since $\mu$ is ergodic, if $n \geq 1$ and
$$
R_n(x)= \inf\{k\geq 1: f^k(x) \in \cQn(x)\}
$$
denotes the \emph{$n$th return time map} then Ornstein-Weiss's
theorem asserts that
$$
h_\mu(f,\cQ)=\lim_{n\to \infty} \frac 1n \log R_n(x),
    \quad \text{for $\mu$-a.e. $x$.}
$$
We shall see later on that the diameter of $Q_n(x)$ tends to zero as
$n\to\infty$ at $\mu$-almost every $x$, which shows that $\cQ$ is a
generating partition for $\mu$. We study the fluctuation of the
random variables $\log R_n$ around the average $n h_\mu(f)$.

\begin{maintheorem}\label{thm.recurrence.fluctuations}
Assume that $\si^2(\phi)$ is positive. Then the following
convergence in distribution holds:
$$
\frac{\log R_n -n h_\mu(f)}{ \si(\phi) \sqrt n} \xrightarrow[n \to
+\infty]{\mathcal D} \cN(0,1),
$$
where $\cN(0,1)$ denotes the standard zero mean Gaussian.
\end{maintheorem}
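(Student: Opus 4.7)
The plan is to follow the Saussol--Paccaut strategy: reduce the fluctuations of $\log R_n$ to those of $-\log\mu(Q_n(x))$ via the exponential return time statistics of Theorem~\ref{thm.first.entrance.distribution}, and then reduce the latter to the fluctuations of the Birkhoff sums $S_n\phi$ via the non-lacunary Gibbs property of $\mu$, so that the Central Limit Theorem (Corollary~\ref{c.CLT}) can be invoked.

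\emph{Step 1 (from Gibbs to Birkhoff sums).} By Lemma~\ref{l.Vtheta.contains.Holder} the H\"older potential $\phi$ lies in $V_\theta$. Since $\si^2(\phi)>0$, Corollary~\ref{c.CLT} yields that
$\bigl(S_n\phi-n\!\int\!\phi\,d\mu\bigr)/\si(\phi)\sqrt n$ converges in distribution to $\cN(0,1)$. By Theorem~\ref{t.OV2} the equilibrium state $\mu$ is a non-lacunary Gibbs measure: there is $K>0$ such that for $\mu$-a.e.\ $x$ one can find a non-lacunary sequence $(n_k)=(n_k(x))$ satisfying
$$
\bigl|-\log\mu(Q_{n_k}(x))-Pn_k+S_{n_k}\phi(x)\bigr|\le \log K .
$$
Using the variational principle $P=h_\mu(f)+\int\phi\,d\mu$, this rewrites as $-\log\mu(Q_{n_k}(x))-n_k h_\mu(f)=-\bigl(S_{n_k}\phi(x)-n_k\!\int\!\phi\,d\mu\bigr)+O(1)$, and by symmetry of the Gaussian, the CLT above transfers along the sequence $(n_k)$ to
$$
\frac{-\log\mu(Q_{n_k}(x))-n_k h_\mu(f)}{\si(\phi)\sqrt{n_k}}\xrightarrow[k\to\infty]{\cD}\cN(0,1).
$$

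\emph{Step 2 (from cylinder measure to return time).} Theorem~\ref{thm.first.entrance.distribution} furnishes, for each $\vep>0$ and $n\ge N_\vep$, a subset $\cQn_\vep$ of $n$-cylinders covering a set of $\mu$-measure $\ge 1-\vep$ for which $\mu(Q)\tau_Q$ is approximately Exp$(1)$-distributed, uniformly with error $Ke^{-\beta n}$. A Kac-type argument (as in \cite{Sau01}) shows that the corresponding family of rescaled return times $\mu(Q_n(x))\,R_n(x)$ is tight under $\mu$, so that
$$
\log R_n(x)=-\log\mu(Q_n(x))+O_p(1).
$$
Dividing by $\sqrt n$ the $O_p(1)$ term becomes negligible in probability.

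\emph{Step 3 (interpolation to all $n$).} Since $Q_{n+1}(x)\subset Q_n(x)$, both $R_n(x)$ and $-\log\mu(Q_n(x))$ are monotone nondecreasing in $n$, while the non-lacunarity of $(n_k)$ gives $(n_{k+1}-n_k)/\sqrt{n_k}\to 0$ and hence $|n\,h_\mu(f)-n_k h_\mu(f)|=o(\sqrt n)$ for $n_k\le n\le n_{k+1}$. Combining this with Shannon--MacMillan--Breiman, $\tfrac1n\log\mu(Q_n(x))\to -h_\mu(f)$, provides a squeeze showing that $-\log\mu(Q_n(x))-n h_\mu(f)$ and $-\log\mu(Q_{n_k}(x))-n_k h_\mu(f)$ differ by $o(\sqrt n)$. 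The convergence in distribution along $(n_k)$ then extends to the full sequence, and Slutsky's lemma combined with Step 2 yields
$\bigl(\log R_n-n h_\mu(f)\bigr)/\si(\phi)\sqrt n\xrightarrow{\cD}\cN(0,1)$.

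The main obstacle is Step~3: the non-lacunary (rather than uniform) nature of the Gibbs property means that the good control of $-\log\mu(Q_n(x))$ is only available along a random subsequence, and one must carefully exploit the monotonicity of $R_n$ and of $-\log\mu(Q_n(x))$, together with the sublinear growth of gaps $n_{k+1}-n_k$ and the a.s.\ Shannon--MacMillan--Breiman asymptotics, to show that the fluctuations on each non-lacunary block are of order $o(\sqrt n)$ and hence do not destroy the Gaussian limit.
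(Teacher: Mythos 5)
Your plan follows the right broad outline (Paccaut/Saussol: reduce the fluctuations of $\log R_n$ to those of $-\log\mu(Q_n)$ via exponential return time statistics, then to Birkhoff sums via a Gibbs property plus the CLT), and Step~2 is in the spirit of Lemma~\ref{l.hitting.vs.recurrence} used in the paper. However, Steps~1 and~3 contain a genuine gap, and in fact they use the \emph{wrong} Gibbs property.

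You invoke the non-lacunary Gibbs property from Theorem~\ref{t.OV2}, in which the Gibbs bound with a uniform constant $K$ holds only along a non-lacunary sequence $(n_k)=(n_k(x))$ that \emph{depends on $x$}. Because $(n_k)$ is random, the statement ``the CLT transfers along $(n_k)$'' is not well posed: the CLT of Corollary~\ref{c.CLT} concerns the distribution of the fixed sequence $(S_n\phi - n\int\phi\,d\mu)/\sigma\sqrt{n}$; it does not subsample along an $x$-dependent sequence $n_k(x)$. Your Step~3 then tries to fix this by interpolation, but the estimate $(n_{k+1}-n_k)/\sqrt{n_k}\to 0$ you assert does \emph{not} follow from non-lacunarity; non-lacunarity only gives $(n_{k+1}-n_k)/n_k\to 0$, which is much weaker (e.g.\ $n_k=k^2$ is non-lacunary but $n_{k+1}-n_k\asymp\sqrt{n_k}$). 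Since $-\log\mu(Q_n(x))$ increases at rate roughly $h_\mu(f)$ per step, the gap $-\log\mu(Q_{n_{k+1}}(x))+\log\mu(Q_{n_k}(x))\approx(n_{k+1}-n_k)h_\mu(f)$ need not be $o(\sqrt{n_k})$, so the interpolation to all $n$ fails and the error can destroy the $\sqrt{n}$-scale Gaussian fluctuations.

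The paper circumvents exactly this by establishing a \emph{weak} Gibbs property that holds for \emph{every} $n$ (not just a random subsequence) at the cost of a slowly growing constant: Lemma~\ref{l.weak.Gibbs} constructs functions $K_n(x)$ with
$K_n(x)^{-1}\le \nu(Q_n(x))/\exp(-Pn+S_n\phi(y))\le K_n(x)$, and Corollary~\ref{c.decay.Kn} shows $K_n(x)\le n^a$ eventually for $\mu$-a.e.\ $x$. Then $\tfrac{1}{\sqrt n}\log K_n(x)\to 0$ a.s., so after converting the event on $\mu(Q_n(x))$ into an event on $S_n\phi(x)$ one can shift the threshold by $\pm\tfrac{1}{\sqrt n}\log K_n(x)+\rho$ and pass to the limit, applying the CLT along the full sequence $n\to\infty$. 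Your proof is missing this weak Gibbs ingredient entirely; replace Steps~1 and~3 by the weak Gibbs bound (Lemma~\ref{l.weak.Gibbs} and Corollary~\ref{c.decay.Kn}) applied for all $n$, and the argument closes.
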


%%%%%%%%%%%%%%%%%%%%%%%%%%%%%%%%%%%%%%%%%%%%%%%%%%%%%%%%%%%%%%%%%%%
\section{Preliminaries}\label{sec.preliminaries}

In this section we recall some necessary concepts that will be used
later on. The reader may choose to omit this section in a first
reading and to come back here when necessary.

%%%%%%%%%%%%%%%%%%%%%%%%%%%%%%%%%%%%%%%%%%%%%%%%%%%%%%%%%%%%%%%%%%%
\subsection{Hyperbolic times}

Here we collect some results from Section~3 and Section~4 in
\cite{OV07} (see also \cite{VV1}), whose proofs we shall omit. Fix
$\vep_0>0$ such that $\sup \phi-\inf \phi < \log\deg(f)-\log q
-\vep_0$ and set $P=\log \la$.

\begin{proposition}\label{p.decreasing.measure}
%\cite[Lemma~3.1 and Proposition~4.3]{OV07}
There exists $\ga_1>0$ such that the measure $\nu(Q_n) \leq
e^{-\ga_1 n}$ for every cylinder $Q_n \in \cQn$. There exists $\ga
\in (0,1)$ and $c_\ga\leq\log q+\vep_0$ such that the cardinality of
cylinders in
$$
B(n,\ga)
    =\big\{Q_n \in \cQ^{(n)} \;|\; \#\{0\leq j \leq n-1 : f^j(Q_n) \subset
                                    \cQ_1 \cup \dots \cup \cQ_q \}>\ga n \big\}
$$
is bounded from above by $\exp(c_\ga n )$ for every large $n$.
Moreover, the measure $\nu(B(\ga,n))$ decreases exponentially fast
as $n\to\infty$.
\end{proposition}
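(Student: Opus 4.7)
The three conclusions are proved in order, with (2) and (3) building on (1). For (1), I exploit the conformality of $\nu$: since $J_\nu f = \la e^{-\phi}$ and each $f|\bar Q_i$ (hence $f|Q_n$ for every $Q_n\in\cQn$) is injective by (H1), iterating the change-of-variables formula yields $\nu(Q_n) \le \la^{-n} e^{n\sup\phi}\,\nu(f^n(Q_n)) \le \la^{-n} e^{n\sup\phi}$. A lower bound on $\la$ is obtained from $\cL_\phi^{\ast}\nu=\la\nu$: integrating the pointwise bound $\cL_\phi^n 1 \ge \deg(f)^n e^{n\inf\phi}$ against $\nu$ gives $\la \ge \deg(f)\, e^{\inf\phi}$. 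Substituting and invoking the sharp form of (H3a), $\sup\phi-\inf\phi < \log\deg(f)-\log q-\vep_0$, yields $\nu(Q_n) \le e^{-\ga_1 n}$ with
\[
\ga_1 := \log\deg(f)-(\sup\phi-\inf\phi) \;>\; \log q + \vep_0 .
\]

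For (2), I count admissible sequences combinatorially. Each $Q_n\in\cQn$ is encoded by a sequence $(i_0,\ldots,i_{n-1})$ with $i_j\in\{1,\ldots,p+q\}$; the Markov property in (H1) only restricts which sequences are admissible, so dropping it gives a valid upper bound. For any prescribed set of $k>\ga n$ ``bad'' positions with $i_j\in\{1,\ldots,q\}$, there are at most $q^{k}p^{n-k}$ matching sequences, hence
\[
\#B(n,\ga) \;\le\; \sum_{k>\ga n}\binom{n}{k}\, q^{k} p^{n-k} \;\le\; n\, e^{\,n \max_{t\in[\ga,1]}\psi(t)}
\]
by Stirling's formula, where $\psi(t):=-t\log t-(1-t)\log(1-t)+t\log q+(1-t)\log p$. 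The key analytic point is that $\psi(1)=\log q$ and $\psi'(t)=\log\bigl(q(1-t)/(pt)\bigr)<0$ for $t>q/(p+q)$, so $\psi$ is strictly decreasing on that interval; choosing $\ga$ sufficiently close to $1$ makes $\max_{[\ga,1]}\psi \le \log q + \vep_0/2$, and absorbing the polynomial prefactor gives $c_\ga \le \log q + \vep_0$ for all large $n$.

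Finally, (3) is immediate: combining (1) and (2) one has $\nu(B(n,\ga)) \le \#B(n,\ga)\cdot\max_{Q_n\in B(n,\ga)}\nu(Q_n) \le e^{(c_\ga-\ga_1)n}$, and the exponent is strictly negative because $c_\ga \le \log q+\vep_0 < \ga_1$. The \emph{main obstacle} is not any single estimate but the coordination between them: the strict gap $c_\ga<\ga_1$ has to hold with $\ga<1$. The slack $\vep_0$ in (H3a) is precisely what produces the required margin, while the analytic fact $\psi(\ga)\to\log q$ as $\ga\to 1$ is what lets the combinatorial side of the argument match the measure side with room to spare.
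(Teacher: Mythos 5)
The paper provides no in-text proof of this proposition; it explicitly states that the results are collected from Sections~3 and~4 of \cite{OV07} and that ``proofs we shall omit,'' so there is nothing here to compare your argument against line by line. What can be said is that your argument is correct and self-contained, and it follows what is essentially the unique natural route: conformality for the measure estimate, a combinatorial/entropy count for the cardinality estimate, and a union bound to combine the two.

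A few checks worth recording explicitly. For part (1), your iteration of the change of variables is legitimate because $f^n|Q_n$ is injective: $f^j(Q_n)$ sits inside a single element of $\cQ$ and (H1) makes $f$ a homeomorphism there, so $J_\nu f^n = \la^n e^{-S_n\phi}$ and $1\ge \nu(f^n Q_n)\ge \la^n e^{-n\sup\phi}\nu(Q_n)$. Your lower bound $\la\ge\deg(f)e^{\inf\phi}$ via $\int\cL_\phi^n 1\,d\nu=\la^n$ is standard, and the resulting $\ga_1=\log\deg(f)-(\sup\phi-\inf\phi)$ satisfies $\ga_1>\log q+\vep_0$ precisely because $\vep_0$ was fixed so that $\sup\phi-\inf\phi<\log\deg(f)-\log q-\vep_0$. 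For part (2), the encoding of each nonempty $Q_n\in\cQ^{(n)}$ by its itinerary $(i_0,\dots,i_{n-1})$ is injective, so dropping the admissibility constraints from the Markov property only over-counts; the bound $\binom{n}{k}\le e^{nH(k/n)}$ is exact (not merely asymptotic), so the passage to $\psi(t)$ costs only a polynomial prefactor, and the monotonicity of $\psi$ on $[q/(p+q),1]$ with $\psi(1)=\log q$ gives $\psi(\ga)\le\log q+\vep_0/2$ for $\ga$ near $1$, which is enough to absorb the prefactor. Part (3) is then the trivial union bound, and the strict gap $c_\ga<\ga_1$ is exactly where the slack $\vep_0$ from (H3a) earns its keep, as you say. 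The one stylistic remark: the paper's Remark~2.8 bounds $\#\cQ^{(n)}$ by $\#\cQ\cdot\deg(f)^{n-1}$ using the Markov structure, a sharper global count than your $(p+q)^n$; but for $B(n,\ga)$ specifically the point is not the total count but the $q$-versus-$p$ split at bad positions, and your overcount is exactly the right thing to do there.
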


We are now in a position to state the precise condition on the
constant $L>0$ in (H1) that is chosen in a {\bf different way} from
\cite{OV07}. Pick $c>0$ such that
\begin{equation}\label{eq.relation.expansion}
\si^{-(1-\ga)} < e^{-2c}<1
    \quad\text{and}\quad
\log q + c\al+\vep_0 < \log \deg(f),
\end{equation}
where $\al>0$ denotes the H\"older exponent of the potential $\phi$.
Assume that $L$ is sufficiently close to one such that $(\log L)^2
\leq 2 c^2$,
\begin{equation}\label{eq.relation.L}
\si^{-(1-\ga)} L^\ga < e^{-2c}<1
    \quad\text{and}\quad
\sup\phi-\inf\phi<\log\deg(f)-\log q -m\log L.
\end{equation}

\begin{lemma}\label{l.positive.frequency}\cite[Lemma~4.4]{OV07}
There exists $\tau\in(0,1)$ such that, for any $n\ge 1$ and any
$x\notin B(\ga,n)$, there exists $l>\tau n$ and integers $1\le n_1 <
\cdots < n_l$ such that $x$ belongs to the closure of an hyperbolic
cylinder $Q_{n_i}\in\cQ^{n_i}_h$ for every $i=1, \ldots, l$.
Furthermore, $\tau \geq 2c/A$ where $A=\log L$.
\end{lemma}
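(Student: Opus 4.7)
The plan is to apply a Pliss-type selection to the sequence of \emph{pointwise upper bounds} on $\log \|Df(f^j(x))^{-1}\|$ coming from (H2), so that the outcome is valid not only for the point $x$ but for every point of the cylinder containing it. Writing $A:=\log L$ as in the statement, I would first define, for $0\le j\le n-1$,
$$
\tilde a_j \;:=\; \begin{cases} -\log \si & \text{if } f^j(x) \in Q_{q+1} \cup \cdots \cup Q_{q+p}, \\ A & \text{if } f^j(x) \in Q_1 \cup \cdots \cup Q_q, \end{cases}
$$
so that $\log \|Df(f^j(x))^{-1}\| \le \tilde a_j$ by (H2). The crucial feature is that $\tilde a_j$ depends only on which atom of $\cQ$ contains $f^j(x)$, so the same sequence $(\tilde a_j)$ serves as a pointwise upper bound along the orbit of every point in the $n$-cylinder of $x$.

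Next I would use the assumption $x \notin B(\ga, n)$, which forces at most $\ga n$ of the iterates $f^j(x)$ to fall in $Q_1 \cup \cdots \cup Q_q$, to estimate
$$
\sum_{j=0}^{n-1} \tilde a_j \;\le\; -(1-\ga)\,n \log\si \,+\, \ga\, n \log L \;=\; n \log\bigl(\si^{-(1-\ga)} L^\ga\bigr) \;<\; -2cn,
$$
using $\si^{-(1-\ga)} L^\ga < e^{-2c}$ from \eqref{eq.relation.L}. Equipped with this average bound and the uniform upper bound $\tilde a_j \le A$, the (backward) Pliss lemma produces at least $\tau n$ indices $1 \le n_1 < \cdots < n_l \le n$ for which
$$
\sum_{j=n_i-k}^{n_i-1} \tilde a_j \;<\; -c\,k \qquad \text{for every } 1 \le k \le n_i.
$$

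The last step is to promote each such $n_i$ to a hyperbolic cylinder. Let $Q_{n_i} \in \cQ^{(n_i)}$ be the element whose closure contains $x$. Since the $\cQ$-itinerary of the first $n_i$ iterates is constant on $Q_{n_i}$, the inequality $\log \|Df(f^j(y))^{-1}\| \le \tilde a_j$ holds for every $y \in Q_{n_i}$ as well, and the Pliss estimate exponentiates to
$$
\prod_{j=n_i-k}^{n_i-1} \|Df(f^j(y))^{-1}\| \;\le\; \exp\!\Bigl(\sum_{j=n_i-k}^{n_i-1} \tilde a_j\Bigr) \;<\; e^{-ck}
$$
for every $1 \le k \le n_i$. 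Hence $n_i$ is a $c$-hyperbolic time for every $y \in Q_{n_i}$, so $Q_{n_i} \in \cQ^{(n_i)}_h$, and we have produced $l \ge \tau n$ hyperbolic cylinders whose closures all contain $x$.

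I expect the main technical obstacle to be extracting the explicit quantitative density $\tau \ge 2c/A$ from the Pliss lemma: this requires the sharper form of the lemma that carefully balances the average deficit $-2c$, the target slope $-c$ and the pointwise upper bound $A$, and it is precisely this optimization that dictates the smallness condition $(\log L)^2 \le 2c^2$ on $L$ in \eqref{eq.relation.L}. The remaining ingredients are essentially symbolic bookkeeping enabled by the Markov structure of $\cQ$ and the piecewise-uniform character of the bounds in (H2).
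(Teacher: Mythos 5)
Your plan captures the correct overall structure and is almost certainly the argument in \cite{OV07} (which this paper only cites, without reproducing the proof): replace $\log\|Df(f^j(\cdot))^{-1}\|$ by a majorant $\tilde a_j$ that depends only on the $\cQ$-itinerary, hence is constant on $n$-cylinders; use $x\notin B(\ga,n)$ together with $\si^{-(1-\ga)}L^\ga<e^{-2c}$ to get $\sum_{j<n}\tilde a_j<-2cn$; run a Pliss selection; and observe that, because $\tilde a_j$ is cylinder-constant, each selected epoch $n_i$ is a hyperbolic time simultaneously for every point of the $n_i$-cylinder. All of this is sound, and the reduction to a cylinder-constant majorant is exactly the right idea.

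The step you flag yourself as the ``main technical obstacle'' does hide a genuine problem. You set up Pliss with ``the uniform upper bound $\tilde a_j\le A$'' as though $A=\log L$ is the quantity that controls the density of Pliss times. It is not. Writing $T_m:=\sum_{j<m}\tilde a_j+cm$, the hyperbolic times are exactly the strict new minima of $T$, and the density of such ladder epochs is governed by the largest possible downward step of $T$, which is $\log\si-c$ (attained when $\tilde a_j=-\log\si$), not by the upper bound $\log L$ on $\tilde a_j$. The Pliss lemma (applied to $-\tilde a_j$ with average $\ge 2c$, target slope $c$, and one-step upper bound $\log\si$) delivers $\tau\ge c/(\log\si-c)$, a quantity that tends to $0$ as $\si$ grows and cannot in general be bounded below by $2c/\log L$. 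Indeed the ``Furthermore'' clause looks untenable as stated: $(\log L)^2\le 2c^2$ forces $\log L\le\sqrt 2\,c$, hence $2c/\log L\ge\sqrt 2>1$, which is incompatible with $\tau\in(0,1)$. So the qualitative existence of $\tau>0$ follows from your argument (with $\tau=c/(\log\si-c)$), but you should not expect any sharpened Pliss computation to produce $\tau\ge 2c/A$ with $A=\log L$; some parameter in the quantitative claim (and in the remark that relies on $c\tau>\log L$) appears to be misidentified.
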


Since $0<\tau<1$, our choice of $c$ in \eqref{eq.relation.expansion}
guarantees that $\log q + c\tau \al+\vep_0 < \log \deg(f)$. The
following lemma asserts backward distances contraction and a Gibbs
property at hyperbolic times.

\begin{lemma}\label{l.Gibbs.at.hyp.times}
Given $Q_n \in \cQn_h$, $1 \leq j$ and $x,y$ in the closure of
$Q_n$,
$$
d_{f^{n-j}(\bar{Q}^n)}(f^{n-j}(x),f^{n-j}(y))
    \leq e^{-c j} \diam(\cQ).
$$
Moreover, there exists $K>0$ (independent of $n$) such that every $y
\in \ov Q_n$ satisfies
$$
K^{-1} \leq \frac{\nu(Q_n)}{\exp( -P n + S_n \phi(y))} \leq K.
$$
\end{lemma}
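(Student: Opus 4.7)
I will prove the two conclusions in sequence: first the backward distance contraction along the orbit of the hyperbolic cylinder, and then the Gibbs bound, which will be extracted from the first part by combining conformality of $\nu$ with a uniform bounded-distortion estimate for $S_n\phi$.

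For the contraction, I use that $Q_n\in\cQn_h$ forces inequality \eqref{eq. c-hyperbolic times} at $n$ for \emph{every} point of $Q_n$. By (H1) each $f|_{\ov Q_i}$ is a homeomorphism onto its image, so $f^j$ restricts to a homeomorphism from $f^{n-j}(\ov Q_n)$ onto $f^n(\ov Q_n)$; by the Markov compatibility together with the finite-inner-diameter assumption, the latter set has inner diameter bounded by $\diam(\cQ)$. Given $u,v\in f^{n-j}(\ov Q_n)$, join $f^j(u)$ to $f^j(v)$ by a curve $\ga$ in $f^n(\ov Q_n)$ of length at most $\diam(\cQ)$, and pull $\ga$ back through the single-valued continuous inverse branch of $f^j$ defined on $f^n(\ov Q_n)$. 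Each point of the pre-image curve $\tilde\ga:=(f^j)^{-1}\circ\ga$ is of the form $f^{n-j}(x')$ for some $x'\in\ov Q_n$, so the chain rule and the hyperbolic time estimate $\prod_{i=n-j}^{n-1}\|Df(f^i(x'))^{-1}\|<e^{-cj}$ give $\|D(f^j)^{-1}\|\le e^{-cj}$ pointwise along $\ga$. Integration along the curve yields $\text{length}(\tilde\ga)\le e^{-cj}\diam(\cQ)$, which is the desired bound on $d_{f^{n-j}(\ov Q_n)}(u,v)$.

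For the Gibbs estimate, conformality $\cL_\phi^*\nu=\la\nu$ together with $P=\log\la$ and injectivity of $f^n|_{Q_n}$ (from (H1)) gives the change of variables
$$\nu(f^n(Q_n))=e^{Pn}\int_{Q_n} e^{-S_n\phi(x)}\,d\nu(x).$$
Using the first part, $\al$-H\"older continuity of $\phi$ with constant $C_\phi$ yields, for any $x,y\in\ov Q_n$,
$$|S_n\phi(x)-S_n\phi(y)|\le C_\phi\sum_{k=0}^{n-1}d(f^k(x),f^k(y))^\al\le C_\phi\diam(\cQ)^\al\sum_{j=1}^{\infty}e^{-c\al j}=:D<\infty,$$
which is the uniform bounded distortion for Birkhoff sums. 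Fixing $y\in\ov Q_n$, this transforms the previous integral identity into $e^{-D}e^{-S_n\phi(y)}\nu(Q_n)\le e^{-Pn}\nu(f^n(Q_n))\le e^{D}e^{-S_n\phi(y)}\nu(Q_n)$. Iterating the Markov compatibility in (H1) shows that $f^n(Q_n)$ contains some full element $Q_i$, so that $m:=\min_i\nu(Q_i)\le \nu(f^n(Q_n))\le 1$, and taking $K=e^{D}/m$ closes the second part.

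The main obstacle will be the careful handling of the inner metric in the contraction step: one must check that the inverse branch of $f^j$ on $f^n(\ov Q_n)$ is globally single-valued and continuous (not just along smooth paths), and that the length of the pulled-back curve is genuinely controlled by the pointwise derivative estimate along $\ga$ so the chain rule applies up to the closure. This is where the hyperbolic-time-ball machinery of \cite{ABV00,OV07} and the finite inner diameter hypothesis in (H1) come in; once granted, the Gibbs part is a routine conformal-measure computation.
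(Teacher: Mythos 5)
The paper states this lemma without proof, citing \cite{OV07}, so there is no in-paper argument to compare against line by line; your reconstruction follows the standard hyperbolic-time contraction and conformal bounded-distortion route of \cite{ABV00,OV07}, and it is essentially correct. One imprecision worth flagging in the first part: you assert that $f^n(\ov Q_n)$ has inner diameter at most $\diam(\cQ)$. By the Markov property $f^{n-1}(Q_n)=Q^{(n-1)}\in\cQ$, so $f^n(Q_n)=f(Q^{(n-1)})$ is in general a union of several elements of $\cQ$ (possibly all of $M$), and while its inner diameter is a finite universal constant (from (H1) together with compactness and connectedness of $M$), it need not be bounded by $\max_i\diam(\ov Q_i)$; this only affects the multiplicative constant, not the rate $e^{-cj}$, and $\diam(\cQ)$ in the lemma should be read as that universal bound. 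The worry you raise about the inverse branch is handled directly: iterating (H1) gives $f^{n-j+i}(\ov Q_n)\subset\ov Q^{(n-j+i)}$ for $0\le i\le j$, so $f^j|_{f^{n-j}(\ov Q_n)}$ is a composition of homeomorphisms with $C^1$ inverse whose derivative is bounded pointwise by the hyperbolic-time product $\prod_{i=n-j}^{n-1}\|Df(f^i(x'))^{-1}\|<e^{-cj}$, exactly as you use. The Gibbs part --- the change of variables $\nu(f^n(Q_n))=e^{Pn}\int_{Q_n}e^{-S_n\phi}\,d\nu$, the uniform H\"older distortion of $S_n\phi$ obtained by summing the contraction estimate, and the lower bound $\nu(f^n(Q_n))\ge\min_i\nu(Q_i)$ via the Markov property --- is the standard conformal-measure computation and is correct.
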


As an immediate consequence we obtain that the diameter of most
cylinders decrease exponentially fast. More precisely,

\begin{corollary}\label{c.diameter.vs.positive.frequency}
The diameter of every cylinder $Q_n \notin B(n)$ satisfies
$$
\diam (Q_n) \leq e^{- c \tau n} \diam (\cQ).
$$
\end{corollary}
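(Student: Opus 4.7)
The plan is to combine the existence of a late hyperbolic time guaranteed by Lemma~\ref{l.positive.frequency} with the backward contraction at hyperbolic times furnished by Lemma~\ref{l.Gibbs.at.hyp.times}. The geometric mechanism is transparent: a cylinder outside $B(n,\ga)$ spends most of the first $n$ iterates in the expanding region, so some iterate of order at least $\tau n$ is a hyperbolic time, and at that hyperbolic time Lemma~\ref{l.Gibbs.at.hyp.times} yields an exponential bound on the inner distance between any two points of the cylinder.

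First, I would fix a cylinder $Q_n\notin B(n,\ga)$ and any two points $x,y\in\ov{Q}_n$. Since membership in $B(n,\ga)$ is determined by the itinerary of the cylinder (the sets $f^j(Q_n)$ are each contained in a unique element of $\cQ$), the point $x$ lies outside $B(\ga,n)$. Applying Lemma~\ref{l.positive.frequency} to $x$ produces integers $1\le n_1<\cdots<n_l$, with $l>\tau n$ and $n_l\le n$, such that $x$ lies in the closure of some hyperbolic cylinder $Q_{n_i}\in\cQ^{n_i}_h$ for every $i$. I focus on the largest one, $n_l>\tau n$.

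Next, since the partitions $\cQ^{(k)}$ are refined by $\cQ^{(k+1)}$ and $n_l\le n$, the cylinder $Q_n$ is contained in the hyperbolic cylinder $Q_{n_l}\in\cQ^{n_l}_h$ whose closure contains $x$; taking closures, $\ov{Q}_n\subset\ov{Q}_{n_l}$, so $y\in\ov{Q}_{n_l}$ as well. I then apply Lemma~\ref{l.Gibbs.at.hyp.times} to the hyperbolic cylinder $Q_{n_l}$ with the choice $j=n_l$, obtaining
$$
d(x,y)\;\le\;d_{\ov{Q}_{n_l}}(x,y)\;\le\;e^{-c\,n_l}\,\diam(\cQ)\;\le\;e^{-c\tau n}\,\diam(\cQ).
$$
Taking the supremum over $x,y\in\ov{Q}_n$ yields the stated bound on $\diam(Q_n)$.

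There is no substantive obstacle in this argument once the two preceding lemmas are in place; the only points demanding care are the verification that $n_l\le n$ (so the nesting $\ov{Q}_n\subset\ov{Q}_{n_l}$ holds), which is built into Lemma~\ref{l.positive.frequency} since its hypotheses only involve the first $n$ iterates, and the observation that the inner diameter is what is being controlled by Lemma~\ref{l.Gibbs.at.hyp.times}—a notion that is meaningful and finite thanks to the finite inner diameter assumption (H1) on the elements of $\cQ$.
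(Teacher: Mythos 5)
Your argument is correct and is essentially the paper's proof, unpacked. Where the paper simply invokes the existence of a simultaneous hyperbolic time $k\geq\tau n$ for $Q_n$ and then applies the mean value theorem, you route the same contraction through Lemma~\ref{l.positive.frequency} (taking the largest time $n_l>\tau n$) and Lemma~\ref{l.Gibbs.at.hyp.times} with $j=n_l$, together with the nesting $Q_n\subset Q_{n_l}$ coming from the Markov structure; the geometric mechanism, the constants, and the final bound are identical.
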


\begin{proof}
Given $Q_n \not\in B(n)$, there exists a positive integer $k \geq
\tau n$ such that $k$ is a simultaneous hyperbolic time for every
point in $Q_n$. By the mean value theorem
$$
\diam (Q_n)
    \leq e^{-c k} \diam (f^k(Q_n))
    \leq e^{- c \tau n} \diam (\cQ),
$$
which proves the corollary.
\end{proof}

\begin{remark}\label{rem.cardinal.Qn}
Observe that $\# \cQ^{(n)} \leq \#\cQ \, \deg(f)^{n-1}$ for every
positive integer $n$, as an easy consequence of the Markov property
and that every point has $\deg(f)$ preimages. Indeed, given $n \in
\mathbb N$, the Markov assumption on $\cQ$ implies that
$\cQn=f^{-n+1}(\cQ)$. On the other hand, given $Q \in \cQ$,
$f^{-n}(Q)$ is the union of $\deg(f)^n$ cylinders. This shows that
$\# \cQ^{(n)} \leq \#\cQ \, \deg(f)^{n}$ for every $n\ge 1$.
\end{remark}

We say that a measure $\eta$ is \emph{exact} if every element in the
tail sigma-algebra $\cB_\infty = \cap_{j\geq 0} f^{-j}\cB$ is
$\eta$-trivial in the sense that it has measure zero or one.

\begin{lemma}\label{l.exactness}
$\mu$ is exact.
\end{lemma}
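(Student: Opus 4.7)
The plan is to show that every $A\in\cB_\infty$ with $\mu(A)>0$ satisfies $\mu(A)=1$, by combining three ingredients: Lebesgue--differentiation along the increasing filtration $\sigma(\cQ^{(n)})$, uniformly bounded distortion of $f^{n}$ on hyperbolic cylinders coming from Lemma~\ref{l.Gibbs.at.hyp.times}, and the topological exactness built into (H1) via $f^N(Q_i)=M$ for every $i$. Fix such an $A$; for each $n\ge 1$ choose $A_n\in\cB$ with $A=f^{-n}(A_n)$ and $f^{-1}(A_{n+1})=A_n$, noting that $\mu(A_n)=\mu(A)$ by $f$-invariance.

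First I would work with the reference measure $\nu$: since $\mu\ll\nu$ we have $\nu(A)>0$, and Corollary~\ref{c.diameter.vs.positive.frequency} together with the exponential decay of $\nu(B(\gamma,n))$ in Proposition~\ref{p.decreasing.measure} shows, via Borel--Cantelli, that $\cQ^{(n)}$ generates $\cB$ modulo $\nu$-null sets and that Lemma~\ref{l.positive.frequency} yields positive frequency of hyperbolic times for $\nu$-a.e.\ $x$. Pick a $\nu$-density point $x\in A$ that is good in both respects; then there is a sequence $n_k\uparrow\infty$ with $Q_{n_k}(x)\in\cQ^{(n_k)}_h$ and
\[
\frac{\nu(A\cap Q_{n_k}(x))}{\nu(Q_{n_k}(x))}\longrightarrow 1.
\]
By the Markov property in (H1), $V_k:=f^{n_k}(Q_{n_k}(x))$ is a union of elements of $\cQ$; since $\cQ$ is finite, pass to a subsequence so that $V_k\equiv V^{\ast}$, and fix some $Q_{i^{\ast}}\subset V^{\ast}$. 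Conformality of $\nu$ and Lemma~\ref{l.Gibbs.at.hyp.times} provide uniformly bounded distortion for $f^{n_k}$ on $Q_{n_k}(x)$, so for every Borel $B\subset Q_{n_k}(x)$ the ratios $\nu(B)/\nu(Q_{n_k}(x))$ and $\nu(f^{n_k}(B))/\nu(V^{\ast})$ are comparable with constants independent of $k$. Applied to $B=A\cap Q_{n_k}(x)$, and using $f^{n_k}(A\cap Q_{n_k}(x))=A_{n_k}\cap V^{\ast}$, this yields $\nu(A_{n_k}^c\cap Q_{i^{\ast}})\le\nu(A_{n_k}^c\cap V^{\ast})\to 0$.

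To close the argument, invoke topological exactness. Write $Q_{i^{\ast}}=\bigsqcup_{j=1}^{m}R_j$ with $R_j\in\cQ^{(N)}$, so that each $f^N|_{R_j}$ is a homeomorphism onto a union of elements of $\cQ$ and $\bigcup_{j}f^N(R_j)=f^N(Q_{i^{\ast}})=M$ by (H1). Since $A_{n_k}=f^{-N}(A_{n_k+N})$ modulo $\nu$-null sets, and the Jacobian $J_\nu f^N=\lambda^N e^{-S_N\phi}$ is bounded above and below on $M$ (a fixed iterate of a continuous function on a compact manifold), a trivial bounded-distortion estimate gives $\nu(A_{n_k+N}^c\cap f^N(R_j))\asymp\nu(A_{n_k}^c\cap R_j)\to 0$ for every $j$. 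Summing over $j$ yields $\nu(A_{n_k+N}^c)\to 0$; absolute continuity $\mu\ll\nu$ then gives $\mu(A_{n_k+N}^c)\to 0$; and $f$-invariance gives $\mu(A^c)=\mu(A_{n_k+N}^c)$, forcing $\mu(A^c)=0$. The main delicate point is the uniform distortion step in the middle paragraph: it is crucial that we work on entire hyperbolic cylinders (rather than just along individual orbit pairs), which is precisely what the Gibbs estimate of Lemma~\ref{l.Gibbs.at.hyp.times} supplies; the remaining ingredients---pigeonhole on the finite family of images $f^{n}(Q_n)$ and bounded distortion of the fixed iterate $f^N$---are routine.
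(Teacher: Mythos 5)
The paper's ``proof'' of this lemma is a one-line citation of \cite[Lemma~6.16]{VV1}, so there is no in-text argument to compare against; you have essentially supplied the missing self-contained proof. Your argument is correct and follows the standard paradigm for exactness in the presence of hyperbolic times and a conformal reference measure: (i) martingale/Lebesgue differentiation along the increasing filtration $\sigma(\cQ^{(n)})$, which is legitimate here because the Borel--Cantelli argument on $B(\gamma,n)$ shows $\diam Q_n(x)\to 0$ for $\nu$-a.e.\ $x$, so $\cQ^{(n)}$ generates; (ii) uniform bounded distortion of $f^{n_k}$ on hyperbolic cylinders, which you correctly extract from Lemma~\ref{l.Gibbs.at.hyp.times}---since $J_\nu f^n=\lambda^n e^{-S_n\phi}$ and the two-sided Gibbs bound forces $e^{S_n\phi}$ to vary by at most $K^2$ on $Q_n\in\cQ^{(n)}_h$, the push-forward of the conditional measure is controlled; (iii) a pigeonhole on the finitely many possible images $f^{n_k}(Q_{n_k}(x))$, and finally the topological exactness $f^N(Q_i)=M$ in (H1), with the elementary (fixed-iterate) distortion bound on $J_\nu f^N$, to propagate the smallness of $\nu(A_{n_k}^c\cap Q_{i^*})$ to all of $M$. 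The only issues are cosmetic: you should note explicitly that the partition boundaries are $\nu$-null (a standard fact in this setting, implicitly used throughout the paper) so that $Q_{i^*}=\bigsqcup_j R_j$ holds mod $\nu$-null sets; and the last absolute-continuity step uses that $d\mu/d\nu$ is bounded above (stated in the paper, from \cite{OV07}), not merely $\mu\ll\nu$. Compared to the paper, your proof is more informative and self-contained, at the cost of rederiving what the cited lemma in \cite{VV1} proves in a more general (non-Markov) framework.
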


\begin{proof}
This is a direct consequence of \cite[Lemma~6.16]{VV1}.
\end{proof}

%%%%%%%%%%%%%%%%%%%%%%%%%%%%%%%%%%%%%%%%%%%%%%%%%%%%%%%%%%%%%%%%%%%
\subsection{Weak Gibbs property}

Since $\mu$ is absolutely continuous with respect to $\nu$ and the
density $d\mu/d\nu$ is bounded away from zero and infinity (see
\cite{OV07}), then $\mu$ satisfies the non-lacunary Gibbs property.
Here we establish a criterium that relates the decay of the first
hyperbolic time map with a weak Gibbs property similar to the one
introduced by Yuri \cite{Yu99}. Let $n_1$ denote the first
simultaneous hyperbolic time map.

\begin{lemma}\label{l.weak.Gibbs}
There are almost everywhere defined function $(K_n)_{n \ge 1}$ such
that
$$
K_n^{-1}(x)
    \leq \frac{\nu(Q_n(x))}{\exp( -P n + S_n \phi(y))} \leq
K_n(x)
$$
for $\nu$-almost every $x$ and every $y\in \ov Q_n(x)$, and
$$
\mu\Big( x \in M : K_n(x) > a(n) \Big)
        \leq n\; \mu\Big( x \in M : n_1(x)> \frac{\log a(n)}{P+\sup|\phi|} \Big).
$$
In particular, $\lim_{n} \frac1n \log K_n=0$ almost everywhere.
\end{lemma}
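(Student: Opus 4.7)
The strategy is to compare $Q_n(x)$ with the largest hyperbolic cylinder containing it. For $x\in M$ and $n\ge 1$, set
\begin{equation*}
m(x,n)=\max\bigl\{m\in\{0,1,\dots,n\}:Q_m(x)\in\cQ^{(m)}_h\bigr\},\qquad k(x,n)=n-m(x,n),
\end{equation*}
with the convention $m(x,n)=0$ if no such $m\ge 1$ exists. The plan is to show that one may take $K_n(x)$ of the form $C_0\exp\bigl(k(x,n)(P+\sup|\phi|)\bigr)$ for a suitable constant $C_0$.

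\emph{Gibbs comparison.} Since $Q_{m(x,n)}(x)$ is hyperbolic, Lemma~\ref{l.Gibbs.at.hyp.times} yields a uniform $K>0$ with
\begin{equation*}
K^{-1}\le\frac{\nu(Q_{m(x,n)}(x))}{\exp\bigl(-Pm(x,n)+S_{m(x,n)}\phi(y)\bigr)}\le K
\end{equation*}
for every $y\in\ov{Q_n(x)}\subset\ov{Q_{m(x,n)}(x)}$. The upper estimate for $\nu(Q_n(x))/\exp(-Pn+S_n\phi(y))$ follows from $\nu(Q_n(x))\le\nu(Q_{m(x,n)}(x))$ together with $|S_{k(x,n)}\phi(f^{m(x,n)}(y))|\le k(x,n)\sup|\phi|$. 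For the lower estimate I invoke conformality of $\nu$: $f^{m(x,n)}$ is injective on $\ov{Q_{m(x,n)}(x)}$ with Jacobian $\lambda^{m(x,n)}e^{-S_{m(x,n)}\phi}$, and combining the change of variable with the bounded distortion of $S_{m(x,n)}\phi$ on $Q_{m(x,n)}(x)$ (again Lemma~\ref{l.Gibbs.at.hyp.times}) produces
\begin{equation*}
\nu(Q_n(x))\ge K^{-1}e^{-Pm(x,n)+S_{m(x,n)}\phi(y)}\,\nu\bigl(Q_{k(x,n)}(f^{m(x,n)}(x))\bigr),
\end{equation*}
where I used the Markov identity $f^{m(x,n)}(Q_n(x))=Q_{k(x,n)}(f^{m(x,n)}(x))$. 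A further use of conformality yields $\nu(Q_j)\ge c_0\, e^{-j(P-\inf\phi)}$ for every $j$-cylinder (with $c_0=\min_i\nu(Q_i)>0$, since $f^j(Q_j)$ is a union of partition elements). Together these bound the Gibbs ratio from below by a constant times $e^{-k(x,n)(P+\sup|\phi|)}$, so $K_n(x):=C_0\exp\bigl(k(x,n)(P+\sup|\phi|)\bigr)$ works for $C_0$ large enough.

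\emph{Probabilistic inequality.} Define recursively $\tau_0(x)=0$ and $\tau_{j+1}(x)=\tau_j(x)+n_1(f^{\tau_j(x)}(x))$. A concatenation argument --- decomposing the product $\prod_{i=\tau_{j+1}-k}^{\tau_{j+1}-1}\|Df(f^i(y))^{-1}\|$ according to whether the index falls below or above $\tau_j(x)$, and combining the induction hypothesis with the definition of $n_1(f^{\tau_j(x)}(x))$ --- shows that each $\tau_j(x)$ is a \emph{simultaneous} hyperbolic time for $x$, so $\tau_j(x)\le m(x,n)$ whenever $\tau_j(x)\le n$. Setting $J(x,n)=\max\{j:\tau_j(x)\le n\}$,
\begin{equation*}
k(x,n)\le n-\tau_{J(x,n)}(x)<\tau_{J(x,n)+1}(x)-\tau_{J(x,n)}(x)=n_1\bigl(f^{\tau_{J(x,n)}(x)}(x)\bigr).
\end{equation*}
Hence $K_n(x)>a$ forces $n_1(f^{\tau_{J(x,n)}(x)}(x))>\log(a/C_0)/(P+\sup|\phi|)$ with $\tau_{J(x,n)}(x)\in\{0,\dots,n-1\}$, and a union bound combined with the $f$-invariance of $\mu$ yields the stated inequality (the constant $C_0$ is absorbable into $K_n$ or into the threshold). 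The almost-sure statement $\tfrac{1}{n}\log K_n\to 0$ then follows by Borel--Cantelli: Lemma~\ref{l.positive.frequency} gives $\{n_1>s\}\subset B(s,\gamma)$ for $s\ge 1/\tau$, so by Proposition~\ref{p.decreasing.measure} and boundedness of $d\mu/d\nu$ the measure $\mu(\{n_1>s\})$ decays exponentially in $s$; applying the probabilistic bound with $a=e^{n\epsilon}$ gives a summable sequence, and letting $\epsilon\searrow 0$ along a countable sequence completes the argument.

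The main obstacle is the inductive step in the probabilistic estimate, namely verifying that each $\tau_j(x)$ is a \emph{simultaneous} hyperbolic time and not merely a hyperbolic time at the point $x$. This is essential for concluding $\tau_j(x)\le m(x,n)$ and hence transferring a lower bound on $k(x,n)$ to a lower bound on $n_1$ at some past iterate of $x$; once this is in place, the rest of the argument consists of standard estimates on conformal measures of cylinders and a routine Borel--Cantelli application.
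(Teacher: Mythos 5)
Your proposal is correct and follows essentially the same strategy as the paper: define $K_n(x)$ in terms of the gap between $n$ and the last simultaneous hyperbolic time $\le n$, verify the Gibbs sandwich by comparing $Q_n(x)$ with the hyperbolic cylinder $Q_{m(x,n)}(x)$, obtain the probabilistic inequality via the concatenation property of simultaneous hyperbolic times, a union bound over $\{0,\dots,n-1\}$, and $f$-invariance of $\mu$, and finish with Borel--Cantelli. The only difference is cosmetic: the paper sets $K_n(x)=K\exp\bigl[(P+\sup|\phi|)(n_{i+1}(x)-n_i(x))\bigr]$ (the full gap between consecutive simultaneous hyperbolic times straddling $n$) and leaves the Gibbs verification to the reader (``it is not hard to show''), whereas you use the slightly tighter exponent $k(x,n)=n-m(x,n)$ and spell out both directions of the Gibbs estimate --- the upper bound via $\nu(Q_n)\le\nu(Q_{m(x,n)})$ and the trivial Birkhoff bound, and the lower bound via conformality plus the lower bound $\nu(Q_j)\ge c_0 e^{-j(P-\inf\phi)}$ and the bounded distortion at hyperbolic times from Lemma~\ref{l.Gibbs.at.hyp.times}. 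Your lower bound produces the factor $e^{-k(\sup\phi-\inf\phi)}$, which is dominated by $e^{-k(P+\sup|\phi|)}$ precisely because (H3a) gives $\sup\phi-\inf\phi<\log\deg(f)\le P-\inf\phi\le P+\sup|\phi|$, so the constant $P+\sup|\phi|$ in the statement is indeed compatible with your construction; and the multiplicative constant $C_0$ only shifts the threshold by $\log C_0/(P+\sup|\phi|)$, which, as you note, is immaterial for the Borel--Cantelli conclusion (the paper has the same silent absorption of the constant $K$). The concatenation property you flag as ``the main obstacle'' does hold and is the same implicit ingredient the paper relies on when defining the $n_i$'s by iterating $n_1$; since you describe the decomposition of the product $\prod\|Df(f^i(y))^{-1}\|$ according to whether the index falls before or after $\tau_j(x)$ accurately, this is a genuine (if unexecuted) verification rather than a gap.
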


\begin{proof}
The proof of this lemma goes along the same ideas in
\cite[Lemma~3.12]{OV07}. Given $n \in \mathbb N$ and $x\in M$ set
$K_n(x)=K\exp[(P+\sup|\phi|)(n_{i+1}(x)-n_i(x))]$, where $i$ is a
positive integer such that $n_i(x) \leq n< n_{i+1}(x)$ and $n_i$
denotes the $i$th simultaneous hyperbolic time map. It is not hard
to show that $K_n$ verifies the Gibbs relation above. Moreover,
\begin{eqnarray*}
\mu\Big(x \in M : K_n(x)> a(n)\Big) &
        \leq \mu \Big( \bigcup_{i} \Big\{x: n_1(f^{n_i(x)}(x))> \frac{\log a(n)}{P+\sup|\phi|} \Big\}
        \Big)\vspace{.2cm}\\
        & \leq n \; \mu\Big( x\in M: n_1(x) > \frac{\log a(n)}{P+\sup|\phi|} \Big),
\end{eqnarray*}
where we made use of the invariance of $\mu$ and that
$n_1(f^{n_i(x)}(x))\ge n_{i+1}(x)-n_i(x)$. The last claim in the
lemma is a direct consequence of the decay estimates.
\end{proof}

\begin{corollary}\label{c.decay.Kn}
There exists $a\in \mathbb N$ such that $\mu$-almost every $x$
satisfies $K_n(x)< n^a$ for all but finitely many values of $n$.
\end{corollary}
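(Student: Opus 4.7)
The plan is to apply Lemma~\ref{l.weak.Gibbs} with the choice $a(n)=n^a$ and then invoke Borel--Cantelli. From the inequality in that lemma,
$$
\mu\bigl( K_n > n^a \bigr)
    \leq n \; \mu\!\left( n_1 > \frac{a \log n}{P+\sup|\phi|} \right),
$$
so what I really need is a sufficiently fast (exponential) decay of the tail of the first simultaneous hyperbolic time map $n_1$ with respect to $\mu$.

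First I would establish the tail bound $\mu(n_1>k) \leq C\,e^{-\lambda k}$ for some $\lambda>0$ and all large $k$. This is available from the ingredients already recalled: Proposition~\ref{p.decreasing.measure} gives exponential decay of $\nu(B(\ga,k))$, and Lemma~\ref{l.positive.frequency} ensures that every $x\notin B(\ga,k)$ has a simultaneous hyperbolic time in $[1,k]$; hence $\{n_1>k\}\subset B(\ga,k)$. Combining this with the fact that $d\mu/d\nu\in L^\infty(\nu)$ (from \cite{OV07}) transfers the exponential decay from $\nu$ to $\mu$.

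Then, plugging $k=\frac{a\log n}{P+\sup|\phi|}$ into this tail bound yields
$$
\mu\bigl( K_n > n^a \bigr)
    \leq C\, n\cdot n^{-\frac{\la a}{P+\sup|\phi|}}
    = C\, n^{\,1 - \frac{\la a}{P+\sup|\phi|}}.
$$
Choosing $a\in\mathbb N$ large enough so that $\la a/(P+\sup|\phi|)>2$ makes the exponent strictly less than $-1$, so the series $\sum_n \mu(K_n > n^a)$ converges. By the Borel--Cantelli lemma, for $\mu$-almost every $x$ the inequality $K_n(x)\leq n^a$ fails for only finitely many $n$, which is exactly the claim.

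The only non-routine point is the exponential tail of $n_1$; once that is in hand, the rest is a direct computation plus Borel--Cantelli. Since the exponential decay of $\nu(B(\ga,n))$ is explicitly stated in Proposition~\ref{p.decreasing.measure} and Lemma~\ref{l.positive.frequency} controls the location of simultaneous hyperbolic times in terms of $B(\ga,n)$, no real obstacle appears and the whole argument fits in a few lines.
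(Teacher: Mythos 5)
Your argument is correct and follows essentially the same route as the paper: apply Lemma~\ref{l.weak.Gibbs} with $a(n)=n^a$, use the inclusion $\{n_1>k\}\subset B(\ga,k)$ from Lemma~\ref{l.positive.frequency} together with the exponential decay of $\nu(B(\ga,k))$ from Proposition~\ref{p.decreasing.measure} (transferred to $\mu$ via the bounded density), choose $a$ large enough that the resulting bound is summable, and conclude by Borel--Cantelli. Your write-up is in fact slightly more careful than the paper's, which omits the division by $P+\sup|\phi|$ inside $B(\ga,\cdot)$ and the explicit passage from $\nu$ to $\mu$.
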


\begin{proof}
We use the inclusion $\{n_1> \log a(n)\} \subset B(\ga,\log a(n))$.
If $a\in \mathbb N$ is large enough it follows from the previous
result that
$$
\mu \Big( x \in M : K_n(x) > n^a \Big)
    %\leq n \, \mu \Big( x \in M : n_1(x) >  \frac{a \log n}{P+\sup|\phi|}\Big)
    \leq n \exp\Big (- \frac{c_\ga a}{P+\sup|\phi|} \log n\Big)
    \leq n^{-2},
$$
which is summable. Our claim follows directly from Borel-Cantelli's
lemma.
\end{proof}

This result gives a sufficient condition to obtain sub-exponential
growth of the sequence $(K_n(x))_{n \geq 1}$ in the weak Gibbs
property that will be of particular interest for the proof of the
log-normal fluctuations of the return times in
Section~\ref{sec.fluctuations}.

%%%%%%%%%%%%%%%%%%%%%%%%%%%%%%%%%%%%%%%%%%%%%%%%%%%%%%%%%%%%%%%%%%%%%%%
\subsection{Essential oscillation and variation} \label{subsec.essential}

In this section we present some basic lemmas, needed for the proof
of a Lasota-Yorke inequality in
Subsection~\ref{subsec.spectral.gap}. Given $g \in L^{\infty}(\nu)$
and a set $E$ we define the \emph{essential oscillation} $\ov \osc
(g,E)$ of $g$ on the set $E$ (with respect to $\nu$) as
$$
\ov \osc (g,E)=\nu-ess\sup \{ |g(x)-g(y)|: x,y \in E\}.
$$
Analogously, $\ov\sup(g,E)$ and $\ov\inf(g,E)$ will denote,
respectively, the essential supremum and essential infimum of $g$ in
the set $E$. The following is an immediate consequence of the
triangular inequality.

\begin{lemma}\label{l.oscillation.inequality}
For every $g, h \in L^\infty(\nu)$ and any set $E$ it holds that
$$
\ov\osc(gh,E)
    \leq \ov\osc(g,E) \ov\sup(h, E)+\ov\sup(g, E) \ov \osc(h, E).
$$
\end{lemma}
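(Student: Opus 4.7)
The plan is to reduce this to the triangle inequality applied pointwise (for $\nu$-almost every pair $(x,y)\in E\times E$) via the usual telescoping trick for products. First I would write, for $\nu$-almost every $x,y \in E$,
$$
g(x)h(x)-g(y)h(y) = g(x)\bigl[h(x)-h(y)\bigr] + h(y)\bigl[g(x)-g(y)\bigr],
$$
so that the triangle inequality yields
$$
|g(x)h(x)-g(y)h(y)| \le |g(x)|\,|h(x)-h(y)| + |h(y)|\,|g(x)-g(y)|.
$$

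Next, I would use that $|g(x)|\le \ov\sup(g,E)$ and $|h(y)|\le \ov\sup(h,E)$ for $\nu$-almost every $x,y\in E$ (by definition of the essential supremum), and similarly $|h(x)-h(y)|\le \ov\osc(h,E)$ and $|g(x)-g(y)|\le \ov\osc(g,E)$ for $\nu\times\nu$-almost every pair in $E\times E$. Combining these four estimates, one obtains
$$
|g(x)h(x)-g(y)h(y)| \le \ov\sup(g,E)\,\ov\osc(h,E) + \ov\sup(h,E)\,\ov\osc(g,E)
$$
for $\nu\times\nu$-almost every $(x,y)\in E\times E$. Taking the essential supremum over such pairs on the left-hand side gives exactly the desired inequality.

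The only delicate point is making sure the ``$\nu$-almost every'' hedges on the four constituent bounds can be intersected into a single $\nu\times\nu$-full-measure set of pairs before taking the essential supremum; this is immediate because the countable intersection of conull sets is conull, and the essential sup with respect to $\nu\times\nu$ of $|g(x)h(x)-g(y)h(y)|$ over $E\times E$ agrees with $\ov\osc(gh,E)$. Apart from this bookkeeping there is no real obstacle, and the proof is essentially one application of the product-rule version of the triangle inequality.
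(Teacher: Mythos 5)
Your proof is correct and fills in exactly what the paper leaves implicit: the paper presents this lemma as an ``immediate consequence of the triangular inequality,'' and your telescoping decomposition $g(x)h(x)-g(y)h(y)=g(x)[h(x)-h(y)]+h(y)[g(x)-g(y)]$, followed by the four essential-sup/essential-osc bounds on a common conull set of pairs, is precisely the standard way to make that assertion rigorous. One small remark: your argument actually requires $\ov\sup(|g|,E)$ and $\ov\sup(|h|,E)$ on the right-hand side, so the lemma as stated implicitly treats $\ov\sup$ as the essential supremum of the absolute value (or is applied only to nonnegative functions such as $e^\phi$, as is the case everywhere in the paper) --- worth noting, but it is an imprecision in the statement, not a gap in your reasoning.
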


In the next lemma we give an estimate on the oscillation of the
$\alpha$-H\"older continuous potential $\phi$ in cylinders with
positive frequency of hyperbolic times.

\begin{lemma}\label{l.oscillation.hyp.times}
There exists $C_\phi>0$ such that
$$
\osc (e^\phi, Q_n)
        \leq C_\phi e^{-c \tau \al n} \diam (\cQ)^\al, \;
        \text{for every}\; Q_n \not\in B(n).
$$
\end{lemma}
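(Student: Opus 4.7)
The plan is to combine two already-established ingredients: the diameter control from Corollary~\ref{c.diameter.vs.positive.frequency} and the $\alpha$-H\"older regularity of $\phi$. If $Q_n\notin B(n)$, then by that corollary any two points $x,y$ in $Q_n$ satisfy $d(x,y)\le \diam(Q_n)\le e^{-c\tau n}\diam(\cQ)$, so the H\"older bound yields
\[
|\phi(x)-\phi(y)|
   \le [\phi]_\al\, d(x,y)^\al
   \le [\phi]_\al\, e^{-c\tau\al n}\,\diam(\cQ)^\al,
\]
where $[\phi]_\al$ denotes the H\"older seminorm.

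To pass from $\phi$ to $e^\phi$ I would use the elementary estimate $|e^{a}-e^{b}|\le e^{\max\{a,b\}}|a-b|$, which is a direct mean value theorem computation on the exponential. Applied pointwise this gives
\[
|e^{\phi(x)}-e^{\phi(y)}|
    \le e^{\sup\phi}\,[\phi]_\al\, e^{-c\tau\al n}\,\diam(\cQ)^\al
\]
for all $x,y\in Q_n$, which after passing to the $\nu$-essential supremum over pairs yields the stated bound with $C_\phi:=e^{\sup\phi}[\phi]_\al$.

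The only mild subtlety is that the oscillation in the statement is the ordinary (not essential) oscillation, but since $\phi$ is continuous this makes no difference, and in any case the same argument controls the essential oscillation as well. I do not anticipate any genuine obstacle here: once Corollary~\ref{c.diameter.vs.positive.frequency} is in hand, this lemma is really just H\"older regularity of $e^\phi$ plus uniform diameter decay on the cylinders outside $B(n)$, so the proof should be essentially two or three lines.
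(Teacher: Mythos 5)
Your proof is correct and follows the same route as the paper: combine the exponential diameter decay from Corollary~\ref{c.diameter.vs.positive.frequency} with $\alpha$-H\"older continuity of $e^\phi$ on cylinders outside $B(n)$. The only difference is that you spell out (via the mean value theorem on the exponential) why $e^\phi$ inherits H\"older regularity from $\phi$, whereas the paper simply asserts it; the constant you obtain, $C_\phi=e^{\sup\phi}[\phi]_\alpha$, is the one the paper implicitly uses.
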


\begin{proof}
Observe that $e^\phi$ is an $\alpha$-H\"older continuous function
for some positive constant $C_\phi$. Therefore, it follows from
Corollary~\ref{c.diameter.vs.positive.frequency} that
$$
| e^{\phi(x)}- e^{\phi(y)} |
    \leq C_\phi \diam(Q_n)^\al
    \leq C_\phi e^{-c \tau \al n} \diam (\cQ)^\al
$$
for every $Q_n \not\in B(\ga,n)$ and every $x, y \in Q_n$. This
proves the lemma.
\end{proof}

The following lemma plays a key role in the proof of the
Lasota-Yorke inequality.

\begin{lemma}\label{l.sup.osc.L1}
Given a positive $\nu$-measure set $E$ and $g\in L^\infty(\nu)$,
$$
\ov\sup(g,E) \leq \ov\osc(g,E) + \frac{1}{\nu(E)} \int_E |g| \,d\nu.
$$
\end{lemma}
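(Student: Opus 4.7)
The plan is to reduce the claim to the identity
$$
\ov\osc(g,E) = \ov\sup(g,E) - \ov\inf(g,E),
$$
and then to bound the essential infimum from above by the average of $|g|$ over $E$.

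First I would justify the identity. The $\leq$ direction is immediate since $g(x)-g(y) \leq \ov\sup(g,E)-\ov\inf(g,E)$ for $\nu\otimes\nu$-almost every $(x,y)\in E\times E$, hence the same bound holds for $|g(x)-g(y)|$. For the $\geq$ direction, given any $\vep>0$ the sets $\{x\in E: g(x)>\ov\sup(g,E)-\vep\}$ and $\{y\in E: g(y)<\ov\inf(g,E)+\vep\}$ both have positive $\nu$-measure by the very definition of essential supremum and infimum, so their Cartesian product carries positive $\nu\otimes\nu$-mass and on it $|g(x)-g(y)|$ exceeds $\ov\sup(g,E)-\ov\inf(g,E)-2\vep$; letting $\vep\to 0$ yields the reverse inequality.

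Next, since $g(x)\geq \ov\inf(g,E)$ for $\nu$-almost every $x\in E$, integrating over $E$ and dividing by the positive quantity $\nu(E)$ gives
$$
\ov\inf(g,E) \,\leq\, \frac{1}{\nu(E)}\int_E g\,d\nu \,\leq\, \frac{1}{\nu(E)}\int_E |g|\,d\nu.
$$
Combining this with the identity above yields
$$
\ov\sup(g,E) \,=\, \ov\inf(g,E)+\ov\osc(g,E) \,\leq\, \ov\osc(g,E) + \frac{1}{\nu(E)}\int_E |g|\,d\nu,
$$
which is the desired inequality.

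There is no substantive obstacle in this argument; the only point warranting care is the characterization of $\ov\osc(g,E)$ as the difference $\ov\sup(g,E)-\ov\inf(g,E)$, which is standard but merits the one-line justification sketched above to confirm that passing to essential suprema is compatible with the pointwise identity $\osc(g,E)=\sup(g,E)-\inf(g,E)$.
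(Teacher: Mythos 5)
Your proof is correct, but it follows a genuinely different route from the paper. The paper starts from the pointwise triangle inequality $|g(x)|\leq|g(x)-g(y)|+|g(y)|\leq\ov\osc(g,E)+|g(y)|$ for $\nu\otimes\nu$-almost every $(x,y)\in E\times E$, integrates in $y$ over $E$, divides by $\nu(E)$, and takes the essential supremum in $x$; this requires no intermediate identity and in fact yields the slightly stronger bound with $\ov\sup(|g|,E)$ on the left-hand side (which is what is actually needed downstream, and which also makes sense for complex-valued $g$, where your $\ov\inf$-based decomposition would not apply). Your route instead reduces the claim to the identity $\ov\osc(g,E)=\ov\sup(g,E)-\ov\inf(g,E)$ and then bounds $\ov\inf(g,E)$ above by the mean; the identity itself demands the extra Fubini-type argument you sketch to pass between a $\nu\otimes\nu$-essential supremum over pairs and a difference of $\nu$-essential extrema. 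Both arguments are short and valid; the paper's is marginally more economical and more general, while yours isolates a structural fact about the essential oscillation that is perhaps clarifying in its own right.
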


\begin{proof}
Observe that $|g(x)| \leq |g(x)-g(y)| +|g(y)| \leq
\ov\osc(g,E)+|g(y)|$ for almost every $x,y \in E$. In particular,
integrating both sides of the previous inequality with respect to
$y$ it follows that
 $
|g(x)| \leq \ov\osc (g, E) + \frac{1}{\nu(E)} \int_E |g| \,d\nu
 $
for $\nu$-almost every $x\in E$. The lemma is now a direct
consequence of the previous relation.
\end{proof}

Denote by $f^k_{Q_k}$ the restriction of $f^k$ to the cylinder $Q_k$
and observe that it is a bijection onto its image. When no confusion
is possible we will denote by $Q_{n+k}$ the cylinder
$f^{-k}_{Q_k}(Q_n)$.

\begin{lemma}\label{l.sup.concatenation}
Given any positive integers $k$, $n$ and cylinders $Q_k \in \cQk$
and $Q_n \in \cQn$ it holds that
$$
e^{S_{n+k}\phi(f^{-k}_{Q_n}(Q_n))}
    \leq e^{S_n\phi(Q_n)} \, e^{S_k\phi(f^{-k}_{Q_k}(Q_n))}
        \leq e^{(\sup\phi-\inf\phi) k} e^{S_{n+k}\phi(f^{-k}_{Q_k}(Q_n))}.
$$
\end{lemma}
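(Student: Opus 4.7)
Write $W = f^{-k}_{Q_k}(Q_n)\subset Q_k$; this is an $(n+k)$-cylinder, and by (H1) the restriction $f^k|_{Q_k}$ is a homeomorphism onto its image, so $f^k$ maps $W$ bijectively onto $Q_n$. The whole argument rests on the Birkhoff decomposition
$$
S_{n+k}\phi(x) = S_k\phi(x) + S_n\phi(f^k(x)), \qquad x\in W,
$$
and on monotonicity of the exponential.

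For the first inequality I would take the supremum over $x\in W$ on both sides of the decomposition. Since $S_k\phi(x)\le S_k\phi(W)$ (where I use the notation $S_k\phi(W)=\sup_{x\in W}S_k\phi(x)$) and $f^k(x)\in Q_n$ implies $S_n\phi(f^k(x))\le S_n\phi(Q_n)$, the supremum of the sum is bounded by the sum of the suprema. Exponentiating gives
$$
e^{S_{n+k}\phi(W)} \le e^{S_n\phi(Q_n)}\, e^{S_k\phi(W)}.
$$

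For the second inequality the key observation is that, for every two points $x,x'\in W$,
$$
|S_k\phi(x)-S_k\phi(x')| \le (\sup\phi-\inf\phi)\,k,
$$
because the difference of each of the $k$ summands is at most $\sup\phi-\inf\phi$. Now pick $y_0\in Q_n$ (nearly) realizing $S_n\phi(Q_n)$ and let $x_0\in W$ be its unique preimage under $f^k|_W$. Using the bound above with $x'=x_0$ and $x$ chosen to nearly realize $S_k\phi(W)$, one gets $S_k\phi(x_0)\ge S_k\phi(W)-(\sup\phi-\inf\phi)k$, hence
$$
S_{n+k}\phi(W)\ge S_k\phi(x_0)+S_n\phi(f^k(x_0))\ge S_k\phi(W)+S_n\phi(Q_n)-(\sup\phi-\inf\phi)\,k.
$$
Rearranging and exponentiating yields the claimed upper bound.

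There is no real obstacle here; the only subtlety is keeping track that $f^k|_{Q_k}$ is a bijection (so that an extremizer $y_0\in Q_n$ has a preimage in $W$) and that the oscillation of $S_k\phi$ across the cylinder $W$ is trivially controlled by $(\sup\phi-\inf\phi)k$ without invoking H\"older continuity or hyperbolic-time estimates.
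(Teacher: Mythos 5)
Your proof is correct and takes essentially the same route as the paper: both pick a (near-)extremizer $y_0\in Q_n$ for $S_n\phi$, pull it back to $x_0=f^{-k}_{Q_k}(y_0)\in Q_{n+k}$, use the Birkhoff decomposition $S_{n+k}\phi(x_0)=S_k\phi(x_0)+S_n\phi(y_0)$, and absorb the discrepancy between $S_k\phi$ at $x_0$ and its supremum into the crude oscillation bound $k(\sup\phi-\inf\phi)$. The only cosmetic difference is that the paper compares $S_k\phi(x_0)$ against the supremum of $S_k\phi$ over all of $Q_k$ (giving the slightly stronger inequality with $e^{S_k\phi(Q_k)}$ in the middle), whereas you compare against the supremum over the sub-cylinder $W=f^{-k}_{Q_k}(Q_n)$, which matches the lemma's statement verbatim; both use the same oscillation estimate and are equally valid.
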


\begin{proof}
Fix $Q_k \in \cQk$ and $Q_n\in\cQn$. The first inequality is
obvious. On the other hand, the Markov property implies that
$f^n(Q_{n+k})= Q_k$. If $x \in Q_n$, $y \in Q_k$ are such that
attain the maximum values in $e^{S_n \phi(Q_n)}$ and $e^{S_k
\phi(Q_k)}$, respectively, then
$$
e^{S_{n+k} \phi(Q_{n+k})} \geq e^{S_{n+k} \phi(f^{-k}_{Q_k}(x))}
                    = e^{S_k \phi( f^{-k}_{Q_k}(x))} \, e^{S_n \phi(x)}.
$$
It follows immediately that
$$
e^{S_k \phi(Q_k)} \, e^{S_n \phi(Q_n)}
    \leq  e^{S_k \phi(y)-S_k \phi( f^{-k}_{Q_k}(x))} \, e^{S_{n+k} \phi(Q_{n+k})}
    \leq e^{(\sup\phi-\inf\phi) k} e^{S_{n+k}\phi(Q_{n+k})},
$$
which proves the lemma.
\end{proof}

Since the diameter of cylinders $Q_n\notin B(n)$ decrease
exponentially fast with $n$, the oscillation of an H\"older
observable over such cylinders is also decreasing.

\begin{lemma}\label{l.oscillation.concatenation}
Given $k \geq 1$ there exists $C_0>0$ (depending on $k$) such that,
if $n$ is large enough then
$$
\osc(e^{S_k \phi}, f^{-k}_{Q_k}(Q_n))
     \leq C_0 \sup(e^{S_k \phi}, f^{-k}_{Q_k}(Q_n)) \;e^{-c \tau \al n}.
$$
for every $Q_k \in \cQ^{(k)}$ and $Q_n \notin B(n)$.
\end{lemma}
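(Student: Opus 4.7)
My plan is to reduce the claim about $e^{S_k\phi}$ to a bound on $\osc(S_k\phi, f^{-k}_{Q_k}(Q_n))$ and then exponentiate. The elementary pointwise inequality $|e^{a}-e^{b}| \leq e^{\max(a,b)}(e^{|a-b|}-1)$, applied with $a = S_k\phi(x)$ and $b = S_k\phi(y)$, gives
$$
\osc(e^{S_k\phi}, E) \leq \sup(e^{S_k\phi}, E)\bigl(e^{\osc(S_k\phi, E)} - 1\bigr).
$$
Once I show $\osc(S_k\phi, f^{-k}_{Q_k}(Q_n)) \leq C_k' e^{-c\tau\al n}$ for some constant $C_k'$ depending only on $k$, taking $n$ large enough that this quantity is $\leq 1$ and applying $e^t - 1 \leq 2t$ for $t \in [0,1]$ will give the statement with $C_0 = 2C_k'$.

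For the oscillation of $S_k\phi$, I first estimate the diameter of the intermediate sets $f^j(f^{-k}_{Q_k}(Q_n))$ for $0 \leq j \leq k-1$. Since $f^k|_{\overline{Q_k}}$ is a homeomorphism by (H1), the inverse branch of $f^{k-j}$ is well defined on $f^j(f^{-k}_{Q_k}(Q_n))$; pulling back a curve in $Q_n$ that almost realizes the inner-distance between $f^k(x)$ and $f^k(y)$ multiplies its length by at most $L^{k-j}$, by $\|Df^{-1}\| \leq L$ from (H2). Since $Q_n \notin B(n)$, Corollary~\ref{c.diameter.vs.positive.frequency} yields $\diam(Q_n) \leq e^{-c\tau n}\diam(\cQ)$, and so
$$
|f^j(x) - f^j(y)| \leq L^{k-j}\, e^{-c\tau n}\, \diam(\cQ) \qquad \text{for all } x,y \in f^{-k}_{Q_k}(Q_n).
$$

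The $\al$-H\"older continuity of $\phi$ now gives $\osc(\phi, f^j(f^{-k}_{Q_k}(Q_n))) \leq |\phi|_\al L^{(k-j)\al} e^{-c\tau\al n} \diam(\cQ)^\al$; summing over $j = 0, \dots, k-1$ produces the desired bound on $\osc(S_k\phi, \cdot)$ with $C_k' = |\phi|_\al \diam(\cQ)^\al \sum_{m=1}^{k} L^{m\al}$, which depends on $k$ but this is admissible by the statement. The main obstacle is the first step: one must keep the pullback argument entirely within the cylinders so that the inverse branches of $f^{k-j}$ are legitimate, and $\diam(Q_n)$ in Corollary~\ref{c.diameter.vs.positive.frequency} has to be read as an inner-diameter (which is indeed how it arises from the mean value theorem along curves in its proof). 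Once this bookkeeping is handled, the remainder is routine H\"older control and a geometric summation.
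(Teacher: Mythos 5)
Your proof is correct and follows essentially the same route as the paper: both reduce the oscillation of $e^{S_k\phi}$ to a H\"older estimate on $S_k\phi$, control the intermediate diameters $\diam(f^j(f^{-k}_{Q_k}(Q_n)))$ by $L^{k-j}\diam(Q_n)$ via (H2) together with Corollary~\ref{c.diameter.vs.positive.frequency}, and close with the inequality $e^t-1\leq 2t$ for small $t$ (which is what the paper's ``$e^t\leq 2t$'' must mean). Your explicit bookkeeping about inner diameters and the legitimacy of the inverse branches of $f^{k-j}$ on cylinders, which the paper glosses over, is a small but sound refinement of the same argument.
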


\begin{proof}
Let $k \geq 1$ and $Q_k \in \cQ^{(k)}$ be fixed. Since $\phi$ is
$\al$-H\"older continuous there is $C>0$ so that
$$
|S_k \phi (x)-S_k \phi (y)|
    \leq \sum_{j=0}^{k-1} |\phi (f^j(x))-\phi(f^j(y))|
    \leq \sum_{j=0}^{k-1} C \, d(f^j(x),f^j(y))^\alpha
$$
for any $x, y \in f^{-k}_{Q_k}(Q_n)$. The term in the right hand
side is clearly bounded by
$$
C k \max_{0 \leq j \leq k-1} \diam (f^j(Q_{n+k}))^\al.
$$
Recall that $\|Df(x)^{-1}\| \leq L$ for every $x\in M$ by (H1). In
consequence, if $n$ is large enough then
 $
\exp( S_k \phi (x)-S_k \phi (y))
    \leq \exp\Big[ C k \max\{1, L^k\}^{\al} \diam(Q_n)^\al \Big],
 $
which is arbitrarily close to zero by
Corollary~\ref{c.diameter.vs.positive.frequency}. Using that $e^t
\leq 2t$ for every $0<t<1$ we deduce that
 $
|\exp( S_k \phi (x))-\exp( S_k \phi (y))|
    \leq e^{S_k\phi (Q_{n+k})} \, C_0 e^{-c\tau \al n},
 $
where
\begin{equation}\label{eq.C0}
C_0(k)=2C k \max\{1, L^k\}^{\al}
\end{equation}
is independent of $n$. Since $x$ and $y$ where chosen arbitrary this
completes the proof of the lemma.
\end{proof}

%%%%%%%%%%%%%%%%%%%%%%%%%%%%%%%%%%%%%%%%%%%%%%%%%%%%%%%%%%%%%%%%%%%%%%%
\section{Spectral gap for the Ruelle-Perron-Frobenius operator}\label{sec.spectral.gap}

In this section we prove that the Ruelle-Perron-Frobenius operator
has a spectral gap in the space $V_\theta$ of functions of essential
bounded variation for special choices of the parameter $\theta$. As
a consequence we show that the density $d\mu/d\nu$ belongs to
$V_\theta$, and that the equilibrium state $\mu$ has exponential
decay of correlations and satisfies a central limit theorem.

%%%%%%%%%%%%%%%%%%%%%%%%%%%%%%%%%%%%%%%%%%%%%%%%%%%%%%%%%%%%%%%%%%%%%%%
\subsection{Continuity of the Ruelle-Perron-Frobenius operator}

For notational simplicity we denote the Ruelle-Perron-Frobenius
operator $\cL_\phi$ simply by $\cL$. First we show that the operator
$\cL$ is continuous in the Banach space $V_\theta$, provided that
the parameter $\theta$ is small. More precisely,

\begin{lemma}\label{l.continuous.operator}
If $\theta \,e^{\log \deg(f)+ \sup \phi} e^{-c\tau\al}<1$ then $\cL$
is a continuous operator in $V_\theta$: there is a positive constant
$C$ such that
 $
\|\cL g\|_{\theta} \leq C \|g\|_{\theta},
 $
for every $g \in V_\theta$.
\end{lemma}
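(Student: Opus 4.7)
The plan is to bound $\|\cL g\|_\theta=\|\cL g\|_\infty+\var(\cL g)$ by a constant multiple of $\|g\|_\theta$. The supremum part is immediate: since every point of $M$ has $\deg(f)$ preimages (Remark~\ref{rem.cardinal.Qn}), $\|\cL g\|_\infty\leq\deg(f)\,e^{\sup\phi}\|g\|_\infty$. The work lies in controlling $\var(\cL g)$.

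Fix $n\geq 1$ and a cylinder $Q_n\in\cQn$. The Markov property of $\cQ$ says that preimages of points in $Q_n$ under $f$ are parametrized by the cylinders $Q_{n+1}\in\cQ^{(n+1)}$ with $f(Q_{n+1})\supset Q_n$, and each inverse branch $f^{-1}_{Q_{n+1}}\colon Q_n\to Q_{n+1}$ is a homeomorphism. Hence $\ov\osc(\cL g,Q_n)\leq\sum_{Q_{n+1}\subset f^{-1}(Q_n)}\ov\osc(e^\phi g,Q_{n+1})$. Applying Lemma~\ref{l.oscillation.inequality} splits each summand into a \emph{$g$-piece} bounded by $e^{\sup\phi}\ov\osc(g,Q_{n+1})$ and a \emph{$\phi$-piece} bounded by $\|g\|_\infty\ov\osc(e^\phi,Q_{n+1})$.

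For the $g$-piece I would use the elementary inequality $e^{S_n\phi(Q_n)}\leq e^{-\inf\phi}e^{S_{n+1}\phi(Q_{n+1})}$, valid for any preimage cylinder (read off from the definition of $S_n\phi$, or from Lemma~\ref{l.sup.concatenation} with $k=1$). Multiplying the osc bound by $\theta^n e^{S_n\phi(Q_n)}$, summing over $n\geq 1$ and over all relevant $Q_{n+1}$, and extracting a factor of $\theta^{-1}$, this contribution reassembles into at most $\theta^{-1}e^{\sup\phi-\inf\phi}\var(g)$; no condition on $\theta$ is needed here. The $\phi$-piece is the delicate one. I would split the inner sum according to whether $Q_{n+1}\in B(\ga,n+1)$ or not. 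On the complement, Lemma~\ref{l.oscillation.hyp.times} gives $\ov\osc(e^\phi,Q_{n+1})\leq C_\phi\,e^{-c\tau\al(n+1)}\diam(\cQ)^\al$; combined with the crude counting bound $\#\cQ^{(n+1)}\leq \#\cQ\,\deg(f)^{n+1}$ and $e^{S_{n+1}\phi(Q_{n+1})}\leq e^{(n+1)\sup\phi}$, this contributes a geometric series of ratio $\theta\, e^{\log\deg(f)+\sup\phi}e^{-c\tau\al}$, which is less than one precisely by the hypothesis of the lemma. On $B(\ga,n+1)$ only the crude bound $\ov\osc(e^\phi,\cdot)\leq 2e^{\sup\phi}$ is available, but Proposition~\ref{p.decreasing.measure} controls the cardinality of $B(\ga,n+1)$ by $e^{c_\ga(n+1)}$; the resulting ratio is $\theta\, e^{c_\ga+\sup\phi}$, which is also less than one because the calibration of $c$ in \eqref{eq.relation.expansion} forces $c_\ga\leq\log q+\vep_0<\log\deg(f)-c\tau\al$, so the single hypothesis on $\theta$ subsumes both cases. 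Together $\var(\cL g)\leq C_1\var(g)+C_2\|g\|_\infty$, and hence $\|\cL g\|_\theta\leq C\|g\|_\theta$.

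The main obstacle is the $\phi$-piece: one is forced to use two distinct estimates on $\ov\osc(e^\phi,\cdot)$ depending on whether the cylinder is hyperbolic-rich or lies in the bad set $B(\ga,n+1)$, and one must check that the careful calibration of $c$ in \eqref{eq.relation.expansion} makes both resulting geometric series simultaneously convergent under the single hypothesis of the lemma.
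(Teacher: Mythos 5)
Your proof is correct and follows essentially the same route as the paper: decompose $\ov\osc(\cL g,Q_n)$ via Lemma~\ref{l.oscillation.inequality} into a $g$-piece and a $\phi$-piece, control the $\phi$-piece by splitting cylinders into a bad set (counted by Proposition~\ref{p.decreasing.measure}) and its complement (where Lemma~\ref{l.oscillation.hyp.times} gives exponential decay of the oscillation), and observe that the calibration $\log q + c\tau\al + \vep_0 < \log\deg(f)$ makes the single hypothesis on $\theta$ control both geometric series. The only cosmetic differences are that you index the bad/good split by $Q_{n+1}\in B(\ga,n+1)$ rather than by $Q_n\in B(n)$, and that you apply the concatenation estimate $e^{S_n\phi(Q_n)}\le e^{-\inf\phi}e^{S_{n+1}\phi(Q_{n+1})}$ uniformly to the $g$-piece rather than first absorbing the bad-cylinder contribution crudely; both lead to the same bound $\var(\cL g)\le C_1\var(g)+C_2\|g\|_\infty$.
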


\begin{proof}
Let $\theta>0$ be such that $\theta \,e^{\log \deg(f)+ \sup \phi}
e^{-c\tau\al} <1$. Given $g \in V_\theta$ we can write
$$
\cL g (x)
    =\sum_{Q \in \cQ} e^{\phi \circ f_Q^{-1}(x)} \, g\circ f_Q^{-1}(x)
    1_{f(Q)}(x).
$$
It is clear that $\|\cL g\|_\infty$ is bounded from above by $\# \cQ
\,e^{\sup \phi} \|g\|_\infty$. Thus, to prove the lemma we are
reduced to show that there exists a constant $C>0$ such that
\begin{equation}\label{eq.continuous.operator}
\var(\cL g)
    :=\sum_{n \geq 0} \theta^n \sum_{Q_n \in \cQn} e^{S_n\phi(Q_n)} \ov\osc (\cL g,Q_n)
    \leq C \|g\|_\theta, \; \forall g \in V_\theta.
\end{equation}
To bound the term involving the oscillation of $\cL g$ notice that,
for every $Q_n \in \cQn$
\begin{multline*}
\ov\osc (\cL g, Q_n)
    \leq \sum_{Q \in \cQ} \ov\osc (e^{\phi \circ f_Q^{-1}} \, g\circ f_Q^{-1} , Q_n) \\
    \leq \sum_{Q \in \cQ}
        \big[ \osc (e^{\phi} , f_Q^{-1}(Q_n)) \;\ov \sup (g) + \sup (e^\phi) \;\ov\osc (g, f_Q^{-1}(Q_n)) \big].
\end{multline*}
Now we deal with the sum over elements $Q_n \in\cQn$ in
\eqref{eq.continuous.operator} by dividing it in two parts,
according to wether $Q_n$ belongs or not to $B(n)$. Since
$\ov\osc(h) \leq 2 \ov\sup (|h|)$ for every $h\in L^\infty(\nu)$ and
$\# B(\ga,n) \leq e^{(\log q + \vep_0)n}$ for every large $n$,
\begin{eqnarray*}
\sum_{\substack{Q_n \in B(n)}} e^{S_n\phi(Q_n)} \ov\osc(\cL g,Q_n)
            & \leq \# B(\ga,n) \, e^{\sup(\phi) n} \times 2 \# \cQ \|e^\phi g \|_\infty \\
            & \leq C_1 e^{(\log q + \sup \phi + \vep_0) n} \|g\|_\infty,
\end{eqnarray*}
for some constant $C_1$ depending only on $\phi$ and $\cQ$. On the
other hand
\begin{multline*}
 \sum_{\substack{Q_n \not\in B(n)}} e^{S_n\phi(Q_n)} \ov\osc(\cL g,Q_n) \\
    \leq \sum_{\substack{Q_n \not\in B(n)}} \sum_{Q \in \cQ}
            e^{S_n\phi(Q_n)} \big[ \osc (e^{\phi} , f_Q^{-1}(Q_n)) \;\ov \sup (g) + \sup (e^\phi) \;\ov\osc (g, f_Q^{-1}(Q_n)) \big].
\end{multline*}
Lemma~\ref{l.oscillation.hyp.times} implies that the right hand side
above is bounded by
\begin{eqnarray*}
C_2 \sum_{\substack{Q_{n+1} \in \cQ^{(n+1)} \\ f(Q_{n+1}) \not\in
        B(n)}} e^{S_{n+1}\phi (Q_{n+1})} C_\phi e^{-c \tau \al n}
        (\diam\cQ)^\al \|g\|_\infty\\
+ C_2\sum_{\substack{Q_{n+1} \in \cQ^{(n+1)} \\ f(Q_{n+1}) \not\in
        B(n)}} e^{S_{n+1}\phi (Q_{n+1})} \ov\osc (g, Q_{n+1})
\end{eqnarray*}
for some positive constant $C_2$ (depending only on $\phi$). We
deduce that there is $C_3>0$ (depending on $\phi$, $\tau$, $\al$,
$\cQ$) such that $\var (\cL g)$ is bounded from above by the sum of
two terms:
\begin{equation*}\tag{a}
\sum_{n=1}^\infty \theta^n \Big[
        C_1 e^{(\log q+\sup\phi+\vep_0)  n}
        + C_3 e^{-c \tau \al n} \sum_{Q_{n+1} \in \cQ^{(n+1)}} e^{S_{n+1}\phi (Q_{n+1})} \Big] \|g\|_\infty
\end{equation*}
and
\begin{equation*}\tag{b}
\frac1\theta \sum_{n=0}^\infty \theta^{n+1}
     C_2\sum_{Q_{n+1} \in \cQ^{(n+1)}} e^{S_{n+1}\phi (Q_{n+1})}\ov\osc
     (g,Q_{n+1}).
\end{equation*}
In particular it follows that $\var (\cL g)$ is bounded by
$$
%\var (\cL g) \leq
    C_2 \frac1{\theta}\var(g)
    +\sum_{n=0}^\infty \Big[ C_1\theta^n e^{(\log q + \sup\phi+\vep_0)n}
        + C_3 \#\cQ\theta^n e^{(\log \deg(f) + \sup\phi)n} e^{-c\tau \al n}
        \Big] \|g\|_{\infty}.
$$
Our choice of the parameter $\theta$ together with the relation
$\log q + c\tau\al +\vep_0 < \log \deg(f)$ guarantees the
summability of the previous series and proves that $\cL$ is a
continuous operator in $V_\theta$.
\end{proof}

Let us stress out that the proof of the previous lemma yields the
existence of a constant $C'>0$ such that
 $
\var (\cL g)  \leq C_2\frac1{\theta}\var(g) +C' \|g\|_\infty,
 $
for every small $\theta$ . However, the term $\frac1{\theta}\var(g)$
increases as $\theta$ gets smaller. In particular, the smaller
$\theta$ is the higher the oscillations that may occur in elements
of $V_\theta$.

%%%%%%%%%%%%%%%%%%%%%%%%%%%%%%%%%%%%%%%%%%%%%%%%%%%%%%%%%%%%%%%%%%%%
\subsection{Spectral gap and decay of correlations} \label{subsec.spectral.gap}

Here we prove a Lasota-Yorke inequality for the
Ruelle-Perron-Frobenius operator. This will finally imply on
exponential decay of correlations and central limit theorem for the
equilibrium state. Throughout, let $\theta$ be fixed and such that
\[
\begin{cases}\tag{$\star$}
\theta \, e^{\log \deg(f)+ \inf\phi}>L^\al>1 \\
\theta \, e^{\log \deg(f)+ 2\inf\phi-\sup\phi}>L^\al>1 \\
\theta \, e^{\log \deg(f)+\sup\phi} e^{-c \tau \al}<1
\end{cases}
\]
Some comments on $(\star)$ are in order. Notice that the third
condition is the one required in Lemma~\ref{l.continuous.operator}.
On the other hand, our choice of the parameter $\theta$ impose
certain restrictions on the potential $\phi$. We are now in a
position to state our second small variation condition on $\phi$:
\begin{equation}\label{eq.extra.conditions.potential}\tag{H3b}
\sup\phi-\inf\phi< \frac12 (c\tau-\log L)\al
\end{equation}

\begin{remark}
This condition is clearly satisfied by an open class of nearly
constant potentials. Indeed, by construction, $c\tau \ge 2c^2/\log
L>\log L$. Note also that \eqref{eq.extra.conditions.potential}
legitimates the choice in $(\star)$.
\end{remark}

Let $r_\theta(\cL_\phi)$ and $r(\cL_\phi)$ denote, respectively, the
spectral radius of $\cL_\phi$ in the Banach spaces $V_\theta$ and
$C(M)$. Since $\|\cL^n 1\|_\theta \geq \|\cL^n 1\|_\infty$ for every
$n \in \mathbb N$ then clearly
$$
r_\theta(\cL_\phi)
    = \lim_{n\to\infty} (\|\cL_\phi^n\|_\theta)^\frac1n
    \geq \lim_{n\to\infty} (\|\cL_\phi^n 1\|_\infty)^\frac1n
    \geq \deg(f) e^{\inf\phi},
$$
which proves that $\deg(f)e^{\inf\phi}$ is simultaneously a lower
bound for both spectral radius $r_\theta(\cL_\phi)$ and
$r(\cL_\phi)$. For the time being, let $\la_0$ denote {\bf any}
positive number larger than $\deg(f)e^{\inf\phi}$. Observe that the
previous choice of $\theta$ guarantees that $\theta \la_0>L^\al>1$
and $\theta \la_0 e^{-(\sup\phi-\inf\phi)}>L^\al>1$.

\begin{proposition}\label{p.Lasota.Yorke}
There are $B_1$ and $\xi \in (0,1)$ such that, for every large $k
\geq 1$ there is $B_2(k)>0$ satisfying
$$
\var(\la_0^{-k}\cL^k g)
        \leq B_1 \xi^k \var(g)+B_2(k) \|g\|_{L^1(\nu)}.
$$
\end{proposition}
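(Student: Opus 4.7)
The plan is to establish the Lasota--Yorke inequality by a localized estimate on each $n$-cylinder $Q_n$, separating the contribution of the exponentially thin bad set $B(n)$ from Proposition~\ref{p.decreasing.measure} from that of the good cylinders, on which the contraction coming from hyperbolic times is available. The parameter choices in $(\star)$ are exactly what will produce a strict contraction factor for the variation, while the constraints in \eqref{eq.relation.expansion} and $(\star)$ ensure that the auxiliary geometric sums in $n$ converge.

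First I would expand $\cL^k g(x)=\sum_{Q_k\in\cQk}(e^{S_k\phi}g)\circ f_{Q_k}^{-k}(x)\,1_{f^k(Q_k)}(x)$ and, writing $Q_{n+k}:=f_{Q_k}^{-k}(Q_n)$, apply Lemma~\ref{l.oscillation.inequality} to bound $\ov\osc(\cL^k g,Q_n)$ by a sum over $Q_k$ of two types of terms: one proportional to $\osc(e^{S_k\phi},Q_{n+k})\,\ov\sup(g,Q_{n+k})$ and one proportional to $\sup(e^{S_k\phi},Q_{n+k})\,\ov\osc(g,Q_{n+k})$. I would then use Lemma~\ref{l.sup.osc.L1} to rewrite $\ov\sup(g,Q_{n+k})$ as $\ov\osc(g,Q_{n+k})+\nu(Q_{n+k})^{-1}\int_{Q_{n+k}}|g|\,d\nu$, splitting the estimate into an oscillation branch destined for $\var(g)$ and an $L^1$ branch destined for $\|g\|_{L^1(\nu)}$. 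The sum $\sum_{Q_n}$ is then split according to whether $Q_n\in B(n)$ or not. On good cylinders, Lemma~\ref{l.oscillation.concatenation} delivers the additional decay $\osc(e^{S_k\phi},Q_{n+k})\le C_0(k)e^{-c\tau\al n}\sup(e^{S_k\phi},Q_{n+k})$; on bad cylinders I would only use $\osc\le\sup$, compensating with $\#B(n)\le e^{c_\ga n}$. Using Lemma~\ref{l.sup.concatenation} to couple weights via $e^{S_n\phi(Q_n)}\sup(e^{S_k\phi},Q_{n+k})\le e^{(\sup\phi-\inf\phi)k}e^{S_{n+k}\phi(Q_{n+k})}$ and reorganizing through the Markov bijection $(Q_n,Q_k)\leftrightarrow Q_{n+k}\in\cQ^{(n+k)}$, the oscillation branch collapses into a tail of $\var(g)$ multiplied by $(\la_0\theta)^{-k}e^{(\sup\phi-\inf\phi)k}$; the second line of $(\star)$ is exactly what forces this factor to be at most $L^{-\al k}$, yielding the desired $B_1\xi^k\var(g)$ bound with $\xi=L^{-\al}\in(0,1)$. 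The $n$-sums converge in both good and bad cases because the third line of $(\star)$ together with \eqref{eq.relation.expansion} imply $\theta e^{c_\ga+\sup\phi}<1$.

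The hard part is the $L^1$ branch on bad cylinders, since the crude bound $\nu(Q_{n+k})^{-1}\int_{Q_{n+k}}|g|\,d\nu\le\ov\sup(g,Q_{n+k})$ would reintroduce $\|g\|_\infty$, which cannot be controlled by $\var(g)$ plus $\|g\|_{L^1(\nu)}$ with a factor that decays in $k$. I would resolve this using the conformality $\cL_\phi^*\nu=\la\nu$ twice: the identity $\nu(Q_{n+k})=\la^{-k}\int_{Q_n}e^{S_k\phi\circ f_{Q_k}^{-k}}\,d\nu$ yields the pointwise ratio $\sup(e^{S_k\phi},Q_{n+k})/\nu(Q_{n+k})\le\la^k e^{(\sup\phi-\inf\phi)k}/\nu(Q_n)$; and the identity $\sum_{Q_k}\int_{Q_{n+k}}|g|\,d\nu=\la^{-k}\int_{Q_n}\cL^k|g|\,d\nu$ together with the global eigen-identity $\int\cL^k|g|\,d\nu=\la^k\|g\|_{L^1(\nu)}$ collapses the inner sum over $Q_k$ into a global $L^1$ quantity. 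Feeding these into the $L^1$ branch produces a bound of the form $B_2(k)\|g\|_{L^1(\nu)}$, where $B_2(k)$ absorbs the polynomial factor $C_0(k)$, the geometric factors $e^{(\sup\phi-\inf\phi)k}$ and $\la^k$, and the sum $\sum_n\theta^n e^{(c_\ga+\sup\phi)n}$. The constants $B_1$ and $\xi$ depend only on the parameters in $(\star)$ and \eqref{eq.relation.expansion}, so they are genuinely uniform in $k$, completing the Lasota--Yorke inequality.
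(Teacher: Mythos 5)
Your proof follows the paper's overall skeleton — expanding $\cL^k g$ over $k$-cylinders, splitting $\ov\osc(\cL^k g,Q_n)$ via Lemma~\ref{l.oscillation.inequality}, converting $\ov\sup$ into $\ov\osc$ plus an $L^1$ average via Lemma~\ref{l.sup.osc.L1}, separating $B(n)$ from its complement, and concatenating weights through Lemma~\ref{l.sup.concatenation}. The oscillation branch of your argument works and the role you assign to conditions $(\star)$ is the right one.

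However, your $L^1$ branch has a genuine gap, and it comes from a small but decisive deviation from the paper. The paper first majorates $\ov\sup(g,Q_{n+k})$ by $\ov\sup(g,Q_k)$ (using $Q_{n+k}\subset Q_k$) and only then applies Lemma~\ref{l.sup.osc.L1} on the $k$-cylinder $Q_k$. This yields the average $\frac{1}{\nu(Q_k)}\int_{Q_k}|g|\,d\nu$ in which $\nu(Q_k)$ admits the lower bound $\min_{Q_k\in\cQk}\nu(Q_k)>0$, a $k$-dependent but $n$-uniform constant; this is precisely what lets the $L^1$ term be collected into $B_2(k)\|g\|_{L^1(\nu)}$. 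You instead apply Lemma~\ref{l.sup.osc.L1} directly on $Q_{n+k}$, producing $\frac{1}{\nu(Q_{n+k})}\int_{Q_{n+k}}|g|\,d\nu$, where $\nu(Q_{n+k})$ decays exponentially in $n+k$.

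Your proposed rescue via conformality does not close the gap. The identity $\sum_{Q_k}\int_{Q_{n+k}}|g|\,d\nu=\la^{-k}\int_{Q_n}\cL^k|g|\,d\nu$ collapses the $Q_k$-sum into a \emph{local} quantity on $Q_n$, not the global $\|g\|_{L^1(\nu)}$. After substituting your ratio bound $\sup(e^{S_k\phi},Q_{n+k})/\nu(Q_{n+k})\le\la^ke^{(\sup\phi-\inf\phi)k}/\nu(Q_n)$, you are left with
$$
\sum_{n}\theta^{n+k}\sum_{Q_n}\frac{e^{S_n\phi(Q_n)}}{\nu(Q_n)}\,\int_{Q_n}\cL^k|g|\,d\nu,
$$
and the weight $e^{S_n\phi(Q_n)}/\nu(Q_n)$ blocks any collapse to $\int\cL^k|g|\,d\nu=\la^k\|g\|_{L^1(\nu)}$. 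In fact, the only a priori bound on this weight (using $\nu(Q_n)\ge\la^{-n}e^{n\inf\phi}\min_Q\nu(Q)$ from conformality) is of order $\la^n e^{(\sup\phi-\inf\phi)n}$; there is no Gibbs control because $n$ need not be a hyperbolic time for $Q_n$, particularly on $B(n)$. Since $(\star)$ forces $\theta\la_0>L^\al>1$, the series $\sum_n(\theta\la)^n e^{(\sup\phi-\inf\phi)n}$ diverges, so the $L^1$ branch is not bounded by $B_2(k)\|g\|_{L^1(\nu)}$ in your scheme. The fix is precisely the step you skipped: enlarge $Q_{n+k}$ to $Q_k$ before invoking Lemma~\ref{l.sup.osc.L1}, so that the $L^1$ denominator is uniformly bounded below once $k$ is fixed.
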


\begin{proof}
Let $k \geq 1$ be fixed and let $Q_k$ denote the elements of the
partition $\cQk$. Observe that
$$
\cL^k g = \sum_{Q_k} e^{(S_k \phi) \circ f^{-k}_{Q_k}}\;
                    g \circ f^{-k}_{Q_k} \; 1_{f^k(Q_k)}.
$$
Given $g \in V_\theta$, using $\ov\sup(g,Q_k)$ to majorate
$\ov\sup(g,Q_{n+k})$ and Lemma~\ref{l.sup.osc.L1}, it is not hard to
see that $\var(\la_0^{-k}\cL^k g)$ is bounded from above by the sum
of the following three terms:
\begin{equation}\label{ub1}
 (\la_0\theta)^{-k} \sum_{n=0}^\infty \theta^{n+k} \sum_{Q_n} \sum_{Q_k}
            e^{S_n\phi(Q_n)}\osc(e^{S_k\phi},Q_{n+k})\,\ov\osc(g,Q_k),
\end{equation}
\begin{equation}\label{ub2}
 (\la_0\theta)^{-k} \sum_{n=0}^\infty \theta^{n+k} \sum_{Q_n} \sum_{Q_k}
            e^{S_n\phi(Q_n)}\osc(e^{S_k\phi},Q_{n+k})\frac{1}{\nu(Q_k)}\int_{Q_k} g \,d\nu,
\end{equation}
and
\begin{equation}\label{ub3}
 (\la_0\theta)^{-k} \sum_{n=0}^\infty \theta^{n+k} \sum_{Q_n} \sum_{Q_k}
            e^{S_n\phi(Q_n)} e^{S_k\phi(Q_{n+k})} \,\ov\osc(g,Q_{n+k}).
\end{equation}
We deal with these three terms separately. First we treat
\eqref{ub1} by rewriting it as
\begin{equation*}
(\la_0\theta)^{-k} \sum_{n=0}^\infty
            \Big[ \theta^n \sum_{Q_n} e^{S_n\phi(Q_n)} \Big]
            \Big[ \theta^k \sum_{Q_k} \osc(e^{S_k\phi},Q_{n+k})\ov\osc(g,Q_k) \Big]
\end{equation*}
and dividing the sum over elements in $\cQn$ according to wether
they belong or not to $B(n)$. Using $\ov\osc(g,E) \leq 2
\ov\sup(g,E)$ it follows that \eqref{ub1} is bounded by the sum of
the two following terms:
$$
(\la_0\theta)^{-k} \sum_{n=0}^\infty
            \Big[ \theta^n \sum_{Q_n \in B(n)} e^{S_n\phi(Q_n)} \Big]
            \Big[ \theta^k \sum_{Q_k} 2\sup(e^{S_k\phi},Q_k) \, \ov\osc(g,Q_k) \Big]
$$
and
$$
(\la_0\theta)^{-k} \sum_{n=0}^\infty
            \Big[ \theta^n \sum_{Q_n \not\in B(n)} e^{S_n\phi(Q_n)} \Big]
            \Big[ \theta^k \sum_{Q_k} C_0(k)\sup(e^{S_k\phi},Q_k) e^{-c\tau\al n}\, \ov\osc(g,Q_k)\Big],
$$
where $C_0(k)$ is given by \eqref{eq.C0} in
Lemma~\ref{l.oscillation.concatenation}. Our choice on $\theta$
yields that the two previous terms are bounded from above by
$C_0(k)(\la_0\theta)^{-k}\var (g)$ up to finite multiplicative
constants. The constants involved are $2 \sum_{n=0}^\infty (\theta
e^{\log q +\sup\phi+\vep_0})^n$ and $\sum_{n=0}^\infty (\theta
e^{\log \deg(f) +\sup\phi} e^{-c\tau\al})^n$, respectively.

On the one hand, \eqref{ub2} is clearly bounded by
$\|g\|_{L^1(\nu)}$ up to a multiplicative term obtained as the sum
over all $n \geq 0$ of
$$
\la_0^{-k} \max \nu(Q_k)^{-1} \, \theta^n \sum_{Q_n} \sum_{Q_k}
            e^{S_n\phi(Q_n)}\osc(e^{S_k\phi},Q_{n+k}).
$$
Since the measure $\nu$ gives positive weight to any cylinder in
$\cQk$, this shows that there exists a positive constant $K_0(k)$
such that \eqref{ub2} is bounded from above by $\|g\|_{L^1}$ up to
the multiplicative term
$$
K_0(k) \sum_{n=0}^\infty \theta^n \sum_{Q_n} \sum_{Q_k}
            e^{S_n\phi(Q_n)}\osc(e^{S_k\phi},Q_{n+k}).
$$
The part of the sum involving elements $Q_n \in\cQn$ that belong to
$B(n)$ is finite because those elements grow exponentially slow
compared with the allowed size of the oscillations. Indeed,
$$
\sum_{n=0}^\infty \theta^n \sum_{Q_n \in B(n)} \sum_{Q_k}
            e^{S_n\phi(Q_n)}\osc(e^{S_k\phi},Q_{n+k})
    \leq
2\#\cQk e^{k \sup\phi} \sum_{n=0}^\infty \big(\theta e^{\log
q+\sup\phi+\vep_0}\big)^n
$$
is finite. In turn, the sum over elements $Q_n$ that do not belong
$B(n)$ is also finite by Lemma~\ref{l.oscillation.concatenation}:
\begin{align*}
\sum_{n=0}^\infty \theta^n & \sum_{Q_n \not\in B(n)} \sum_{Q_k}
            e^{S_n\phi(Q_n)}\osc(e^{S_k\phi},Q_{n+k})
            \\
 & \leq \sum_{n=0}^\infty \theta^n \sum_{Q_n \not\in B(n)} \sum_{Q_k}
            e^{S_n\phi(Q_n)} C_0 e^{S_k \phi(Q_{n+k})}\,e^{-c \tau \al
            n} \\
 & \leq C_0 \#\cQk e^{k \sup \phi} \sum_{n=0}^\infty \big[\theta
            e^{\log \deg(f)+\sup\phi} \,e^{-c \tau \al}\big]^n <\infty.\\
\end{align*}
This shows that \eqref{ub2} is bounded from above by $\|g\|_{L^1}$
up to a multiplicative constant $B_2(k)$. On the other hand
Lemma~\ref{l.sup.concatenation} ensures that \eqref{ub3} is  bounded
by
\begin{align*}
(\la_0\theta e^{-(\sup\phi-\inf\phi)})^{-k} &
    \sum_{n=0}^\infty \theta^{n+k}\sum_{Q_{n+k} \in \cQ^{(n+k)}}
            e^{S_{n+k}\phi(Q_{n+k})} \,\ov\osc(g,Q_{n+k})\\
    & \leq (\la_0\theta e^{-(\sup\phi-\inf\phi)})^{-k} \var(g).
\end{align*}
It follows that
 $
 \var(\la_0^{-k}\cL^k g) \leq B_1 \xi^k \var(g)+ B_2(k) \|g\|_{L^1},
 $
for a constant $\xi$ is given by
 $
 \xi =\max\{(\la_0\theta)^{-1},(\la_0\theta e^{-(\sup\phi-\inf\phi)})^{-1}\} \frac1k \log C_0(k).
 $
Our first and second conditions on $\theta$ imply that $\xi$ is
strictly smaller than one. This completes the proof of the
proposition.
\end{proof}

As a direct consequence we obtain the following:

\begin{corollary}[Lasota-Yorke Inequality]\label{c.Lasota.Yorke}
There are positive constanst $D_1, D_2$ and $\xi_1 \in (0,1)$ such
that
$$
\var(\la_0^{-n}\cL^n g)
        \leq D_1 \xi_1^n \var(g)+D_2 \|g\|_{L^1(\nu)}.
$$
for every $n \geq 1$.
\end{corollary}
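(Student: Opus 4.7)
The proof is an iteration argument based on Proposition~\ref{p.Lasota.Yorke}, combined with the continuity of $\cL$ on $V_\theta$ (Lemma~\ref{l.continuous.operator}) and a Poincar\'e-type estimate needed to handle the residual power at the end. Since $\cL 1(x)\ge \deg(f)e^{\inf\phi}$ forces the spectral radius of $\cL$ on $C(M)$ to satisfy $\la\geq \deg(f)e^{\inf\phi}$, I fix $\la_0$ equal to (or greater than) $\la$, which is an admissible choice in Proposition~\ref{p.Lasota.Yorke}. Under this normalization, $\cL^*\nu=\la\nu$ together with $|\cL h|\leq \cL|h|$ yields the $L^1$-contraction $\|\la_0^{-n}\cL^n h\|_{L^1(\nu)}\leq \|h\|_{L^1(\nu)}$ for every $n\ge 0$. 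Next I choose $k_0$ large enough that Proposition~\ref{p.Lasota.Yorke} applies with $k=k_0$ and that $\alpha_0:=B_1\xi^{k_0}<1$, and set $\xi_1:=\alpha_0^{1/k_0}\in(0,1)$.

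A direct induction on $m\geq 0$, applying Proposition~\ref{p.Lasota.Yorke} with $k=k_0$ at each step and using the $L^1$-contraction above to control the $\|\cdot\|_{L^1(\nu)}$ term, gives
\[
\var(\la_0^{-mk_0}\cL^{mk_0}g)\leq \alpha_0^m\,\var(g)+\frac{B_2(k_0)}{1-\alpha_0}\|g\|_{L^1(\nu)}.
\]
For a general $n\geq 1$, write $n=mk_0+r$ with $0\leq r<k_0$ and apply the previous inequality to the function $h:=\la_0^{-r}\cL^r g$:
\[
\var(\la_0^{-n}\cL^n g)\leq \alpha_0^m\,\var(\la_0^{-r}\cL^r g)+\frac{B_2(k_0)}{1-\alpha_0}\|g\|_{L^1(\nu)}.
\]
Since $r<k_0$, Lemma~\ref{l.continuous.operator} yields $\var(\la_0^{-r}\cL^r g)\leq M_0\|g\|_\theta=M_0(\|g\|_\infty+\var(g))$ for some constant $M_0=M_0(k_0)$.

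The last ingredient is to replace $\|g\|_\infty$ by a combination of $\var(g)$ and $\|g\|_{L^1(\nu)}$. The $n=1$ term in the definition of $\var$ shows that $\ov\osc(g,Q)\leq (\theta e^{\inf\phi})^{-1}\var(g)$ for every $Q\in\cQ$; coupling this with Lemma~\ref{l.sup.osc.L1} applied to $E=Q$ and maximizing over the finite family $\cQ$ (on which $\nu$ is uniformly positive, since the Markov assumption $f^N(Q)=M$ together with conformality forces $\nu(Q)\geq (\la e^{-\inf\phi})^{-N}$) produces a Poincar\'e-type bound
\[
\|g\|_\infty \leq C_1'\,\var(g)+C_2'\|g\|_{L^1(\nu)},
\]
with $C_1'=(\theta e^{\inf\phi})^{-1}$ and $C_2'=\max_{Q\in\cQ}\nu(Q)^{-1}$. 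Plugging this into the previous estimate and writing $\alpha_0^m=\xi_1^{mk_0}\leq \xi_1^{-k_0}\xi_1^n$ yields the corollary with $D_1:=M_0(1+C_1')\xi_1^{-k_0}$ and $D_2$ chosen large enough to absorb the remaining constants. The argument contains no genuinely delicate step; the only point that calls for a brief justification is the Poincar\'e-type estimate, which is immediate from Lemma~\ref{l.sup.osc.L1} and the very definition of $\|\cdot\|_\theta$.
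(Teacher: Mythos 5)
Your proof is correct, and the core iteration scheme (iterating Proposition~\ref{p.Lasota.Yorke} at a fixed scale $k_0$ chosen so that $\alpha_0=B_1\xi^{k_0}<1$, using the $L^1(\nu)$-contraction of $\la_0^{-1}\cL$ coming from conformality of $\nu$, and then converting $\alpha_0^m$ to $\xi_1^n$ with $\xi_1=\alpha_0^{1/k_0}$) is exactly what the paper does. The two arguments diverge only in how the residual power $0\le r<k_0$ is absorbed. The paper applies Proposition~\ref{p.Lasota.Yorke} one more time at scale $r$, using $B_1\xi^r\le B_1$ and $\max_{1\le\ell\le k}B_2(\ell)$ to get a bound for $\var(\la_0^{-r}\cL^r g)$ directly in terms of $\var(g)$ and $\|g\|_{L^1(\nu)}$. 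You instead invoke Lemma~\ref{l.continuous.operator} to bound $\var(\la_0^{-r}\cL^r g)$ by a multiple of $\|g\|_\theta=\|g\|_\infty+\var(g)$, and then supply a Poincar\'e-type estimate $\|g\|_\infty\le C_1'\,\var(g)+C_2'\,\|g\|_{L^1(\nu)}$, derived from the $n=1$ term in the definition of $\var$, Lemma~\ref{l.sup.osc.L1}, and the positivity of $\nu$ on each element of the finite partition $\cQ$. Your detour is slightly longer but is actually cleaner on one point: Proposition~\ref{p.Lasota.Yorke} is stated ``for every large $k\ge 1$,'' so it is not literally applicable to small residual powers $r$, whereas continuity of $\cL$ on $V_\theta$ holds for all iterates; your Poincar\'e step is the natural way to close the resulting gap, and the facts you cite (positivity of $\nu$ on cylinders, Lemma~\ref{l.sup.osc.L1}) are ones the paper already uses. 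Either route yields the same constants up to inessential rescaling.
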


\begin{proof}
Let $B_1$ and $\xi$ be given as in the previous proposition. Pick
$k\geq 1$ such that $\xi_1:=\sqrt[k]{B_1 \xi^k}<1$ and, for any
given $n \ge 1$, write $n=jk+r$ where $j$ is a positive integer and
$0\leq r \leq k-1$. If one applies Proposition~\ref{p.Lasota.Yorke}
recursively and note that $\la_0^{-1}\cL$ does not increase the
$L^(\nu)$-norm, because $\nu$ is conformal, it follows that
$$
\var(\la_0^{-n}\cL^n g)
    \leq \xi_1^{kj} \var(\la_0^{-r}\cL^r g) + B_2(k) \,\Big(\sum_{\ell \ge 0} \xi_1^\ell\Big) \,\|g\|_1.
$$
Moreover, Proposition~\ref{p.Lasota.Yorke} also guarantees that
$$
\var(\la_0^{-r}\cL^n r)
    \leq B_1 \var(g) + \max_{1\le\ell\le k} B_2(\ell) \;\|g\|_1.
$$
The corollary is then immediate taking $D_2=\max_{1\le\ell\le k}
B_2(\ell) +B_2(k) \,\Big(\sum_{\ell \ge 0} \xi_1^\ell\Big)$ and
$D_1=B_1 \xi_1^{-k}$.

\end{proof}

We proceed to prove that the Ruelle-Perron-Frobenius $\cL_\phi$ is
quasi-compact in the functional space $V_\theta$ for any parameter
$\theta$ as above. Since $\la:=r(\cL_\phi)\geq \deg(f) e^{\inf\phi}$
the previous results hold with $\la_0=\la$. First we show that the
iterates of $\la^{-1}\cL_\phi$ are well approximated by operators of
finite rank. Let $A_n$ be the linear operator defined in $V_\theta$
by
$$
A_n(g)= \la^{-n}\cL^n \Big(\mathbb E_\nu(g\mid \cQ^{(n)})\Big)
$$
for every $g \in V_\theta$, where $\mathbb E_\nu( \cdot \mid
\cQ^{(n)})$ stands for the conditional expectation with respect to
the partition $\cQ^{(n)}$. Since the partitions $\cQn$ have finitely
many elements then it is not hard to see that each $A_n$ is a linear
operator of finite rank, hence compact. In addition,

\begin{lemma}\label{l.compact.approximation}
There is $C>0$ and $\xi_1 \in (0,1)$ such that
$$
\|\la^{-n} \cL^n-A_n\|_\theta \leq C \xi_1^n.
$$
\end{lemma}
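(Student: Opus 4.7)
The starting observation is the identity $\la^{-n}\cL^n g - A_n g = \la^{-n}\cL^n h$, where $h := g - \mathbb E_\nu(g\mid\cQ^{(n)})$; hence the task reduces to producing an exponential bound $\|\la^{-n}\cL^n h\|_\theta \leq C\xi_1^n\|g\|_\theta$. Since $\mathbb E_\nu(g\mid Q_n)$ is the $\nu$-average of $g$ over $Q_n$ and lies in the essential range of $g|_{Q_n}$, two elementary bounds hold for $h$: $|h(x)|\leq\ov\osc(g,Q_n(x))$ for $\nu$-a.e.\ $x$, and $\ov\osc(h,Q_m)\leq 2\ov\osc(g,Q_m)$ on every cylinder $Q_m$ of any order, whence $\var(h)\leq 2\var(g)$.

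The crucial ingredient is the universal estimate $\nu(Q_n)\leq \la^{-n}e^{S_n\phi(Q_n)}$, valid for every $Q_n\in\cQ^{(n)}$; it follows directly from conformality of $\nu$, since $\la^{-n}\nu(f^n(Q_n)) = \int_{Q_n}e^{-S_n\phi}\,d\nu \geq e^{-S_n\phi(Q_n)}\nu(Q_n)$. With it in hand the $L^1$ and sup-norm estimates drop out in parallel:
\[
\|h\|_{L^1(\nu)}\leq \sum_{Q_n}\nu(Q_n)\ov\osc(g,Q_n)\leq \la^{-n}\sum_{Q_n}e^{S_n\phi(Q_n)}\ov\osc(g,Q_n)\leq (\la\theta)^{-n}\var(g),
\]
and, using $f^{-n}_{Q_n}(x)\in Q_n$ so that $S_n\phi(f^{-n}_{Q_n}(x))\leq S_n\phi(Q_n)$,
\[
|\la^{-n}\cL^n h(x)|\leq\sum_{Q_n:\, f^n(Q_n)\ni x}\la^{-n}e^{S_n\phi(f^{-n}_{Q_n}(x))}\ov\osc(g,Q_n)\leq (\la\theta)^{-n}\var(g).
\]
The contraction factor $(\la\theta)^{-1}<1$ comes from the first inequality of $(\star)$ together with $\la\geq\deg(f)e^{\inf\phi}$.

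Finally, feeding these two bounds into the Lasota--Yorke inequality of Corollary~\ref{c.Lasota.Yorke} applied to $h$ yields
\[
\var(\la^{-n}\cL^n h)\leq D_1\xi_1^n\var(h)+D_2\|h\|_{L^1(\nu)}\leq \big(2D_1\xi_1^n+D_2(\la\theta)^{-n}\big)\var(g),
\]
and summing with the previous sup-norm bound produces the desired $\|\la^{-n}\cL^n h\|_\theta \leq C\xi_1^n\|g\|_\theta$. The main technical point is the uniform upper Gibbs-type bound on $\nu(Q_n)$: it is the cleanest consequence of conformality and the Markov property, bypasses the usual case analysis on hyperbolic vs.\ non-hyperbolic cylinders, and reduces both key estimates to the tautological inequality $\theta^n\sum_{Q_n}e^{S_n\phi(Q_n)}\ov\osc(g,Q_n)\leq\var(g)$.
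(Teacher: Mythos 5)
Your argument is correct, and for the variation part it takes a genuinely cleaner route than the paper's. Both proofs establish the sup-norm bound $\|\la^{-n}\cL^n g - A_n g\|_\infty \le (\theta\la)^{-n}\var(g)$ in the same way. For the variation, however, the paper does not invoke Corollary~\ref{c.Lasota.Yorke} directly: it re-runs the decomposition from the proof of Proposition~\ref{p.Lasota.Yorke} with $\ov g_n = g - \mathbb E_\nu(g\mid\cQn)$, and the extra information that makes the bound exponential is the observation $\int_{Q_n}\ov g_n\,d\nu = 0$, which kills the $L^1$ contribution inside the estimate $\ov\sup(\ov g_n, Q_n) \le \ov\osc(\ov g_n, Q_n) + \nu(Q_n)^{-1}\int_{Q_n}\ov g_n\,d\nu$. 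You instead treat Corollary~\ref{c.Lasota.Yorke} as a black box and compensate for the $D_2\|\cdot\|_{L^1(\nu)}$ term by producing the explicit bound $\|h\|_{L^1(\nu)} \le (\la\theta)^{-n}\var(g)$ from conformality ($\nu(Q_n)\le\la^{-n}e^{S_n\phi(Q_n)}$) together with the tautological $\theta^n\sum_{Q_n}e^{S_n\phi(Q_n)}\ov\osc(g,Q_n)\le\var(g)$. This is more modular and avoids repeating the Lasota--Yorke computation, at the small cost of needing the auxiliary bounds $|h|\le\ov\osc(g,Q_n(\cdot))$ a.e.\ and $\var(h)\le 2\var(g)$, both of which you justify correctly. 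One small point worth making explicit: the a.e.\ pointwise bound on $h$ is used at the preimages $f^{-n}_{Q_n}(x)$ inside the sup-norm estimate, so one should note that conformality of $\nu$ (with bounded Jacobian) pushes $\nu$-null sets forward to $\nu$-null sets, so the exceptional set survives for $\nu$-a.e.\ $x$; the paper's computation has the same implicit step.
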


\begin{proof}
First we bound the $L^\infty(\nu)$ part in $\|\cdot\|_\theta$. Given
$g \in V_\theta$,
\begin{align*}
\|\la^{-n} \cL^n g -  A_n g &\|_\infty
        = \la^{-n} \| \cL^n \big( g -\mathbb E (g \mid \cQn)\big)\|_\infty \\
        & = \la^{-n} \big\| \sum_{Q_n} e^{(S_n\phi)\circ f_{Q_n}^{-n}} \,
            \big[ g\circ f_{Q_n}^{-n}-\mathbb E_\nu(g \mid \cQn)\circ f_{Q_n}^{-n}
            \big] 1_{f^n(Q_n)} \,\big\|_\infty.
\end{align*}
Moreover, for any $Q_n \in \cQn$ the difference between $g$ and
$\mathbb E_\nu(g \mid \cQn)$ computed over the preimages of
$f_{Q_n}^{-n}$ in the term above satisfies
$$
 \big| g\circ f_{Q_n}^{-n}(x)-\mathbb E_\nu(g \mid \!\cQn)\circ f_{Q_n}^{-n}(x)\big|
        %= \big| g(y)-\frac{1}{\nu(Q_n)} \int_{Q_n} g \, d\nu \big|
        \!\!\leq \frac{1}{\nu(Q_n)} \int_{Q_n} | g(f_{Q_n}^{-n}(x))-g(z) | \, d\nu(z)
        \!\!\leq \ov\osc(g,Q_n).
$$
In particular, we deduce that the $L^\infty$ term involved in the
computation of the norm $\|\cdot\|_\theta$ decreases exponentially
fast:
$$
\|\la^{-n} \cL^n g-A_n g\|_\infty
        \leq\la^{-n} \sum_{Q_n} e^{S_n\phi(Q_n)} \, \ov\osc(g,Q_n)
        \leq (\theta\la)^{-n} \var(g).
$$
The variation term in $\|\cdot\|_\theta$ can be bounded analogously,
using Lemma~\ref{l.oscillation.inequality} as in the proof of the
Lasota-Yorke inequality. Indeed,
\begin{align*}
\var(\la^{-n} \cL^n g-A_n g) &
        %= \la^{-n} \sum_{k=0}^\infty \theta^k \sum_{Q_k}
        %   e^{S_k\phi(Q_k)} \ov\osc(\cL^n \big( g -\mathbb E (g \mid \cQn)\big),Q_k) \\ &
        \leq \la^{-n} \sum_{k=0}^\infty \theta^k \sum_{Q_k} \sum_{Q_n}
        e^{S_k\phi(Q_k)} \ov\osc(e^{S_n\phi},Q_{n+k}) \ov\sup(\ov g_n, Q_{n+k})
        \\ & \\
        & + \la^{-n} \sum_{k=0}^\infty \theta^k \sum_{Q_k} \sum_{Q_n}
        e^{S_k\phi(Q_k)} e^{S_n\phi(Q_{n+k})} \ov\osc(\ov g_n,Q_{n+k}),
\end{align*}
where $\ov g_n = g -\mathbb E(g\,|\,\cQn)$ and
$Q_{n+k}=f^{-n}_{Q_n}(Q_k)$. Since $\mathbb E_\nu(g \,|\,\cQn)$ is
constant over the elements in $\cQn$, clearly $\ov\osc(\ov
g_n,Q_{n+k})=\ov\osc( g,Q_{n+k})$. In consequence the second term in
the right hand side of the sum above coincides with \eqref{ub3},
which in turn is bounded by $(\la \theta
e^{-(\sup\phi-\inf\phi)})^{-n}\var(g)$. On the other hand, the first
term above can be bounded as in \eqref{ub1} by $C(\la \theta
L^{-\al})^{-n} \var(g)$, for some positive constant $C$ that does
not depend on $n$, because
$$
\ov\sup(\ov g_n, Q_{n+k})
        \leq \ov\sup(\ov g_n, Q_{n})
        \leq \ov\osc(\ov g_n, Q_{n}) + \frac{1}{\nu(Q_n)} \int_{Q_n} \ov g_n \,d\nu
$$
and $\int_{Q_n} \ov g_n \, d\nu=0$. In consequence, there exists
$C>0$ and $0<\xi<1$ such that
$$
\var(\la^{-n} \cL^ng-A_n g)
        \leq C \xi^n \|g\|_\theta.
$$
Since this $\theta$-variation term also decreases exponentially fast
as $n$ tends to infinity, this completes the proof of the lemma.
\end{proof}

An interesting consequence of the previous lemma is that the
spectral radius of the Ruelle-Perron-Frobenius operator $\cL_\phi$
in the Banach spaces $C(M)$ and $V_\theta$ do coincide.

\begin{lemma}
$r_\theta(\cL_\phi)=r(\cL_\phi)$.
\end{lemma}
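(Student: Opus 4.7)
The plan is to prove the two inequalities $r(\cL_\phi) \leq r_\theta(\cL_\phi)$ and $r_\theta(\cL_\phi) \leq r(\cL_\phi)$ separately. The first is elementary, following from the positivity of $\cL_\phi$ and the fact that $V_\theta$ contains the constants. The second is the interesting direction and exploits the Lasota-Yorke inequality of Corollary~\ref{c.Lasota.Yorke}.

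For the lower bound $r(\cL_\phi) \leq r_\theta(\cL_\phi)$, I would first note that $\cL_\phi$ is a positive operator on $C(M)$, so that $\|\cL_\phi^n\|_{C(M)} = \|\cL_\phi^n 1\|_\infty$ and therefore $r(\cL_\phi) = \lim_n \|\cL_\phi^n 1\|_\infty^{1/n}$. Since $1 \in V_\theta$ with $\|1\|_\theta = 1$, one has $\|\cL_\phi^n 1\|_\infty \leq \|\cL_\phi^n 1\|_\theta \leq \|\cL_\phi^n\|_\theta$, and taking $n$-th roots yields the claim. This is exactly the same argument already used in the excerpt to show $r_\theta(\cL_\phi) \geq \deg(f) e^{\inf\phi}$, applied to the operator instead of the iterates of the constant function.

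For the upper bound $r_\theta(\cL_\phi) \leq r(\cL_\phi)$, I would fix an arbitrary $\la_0 > r(\cL_\phi) \geq \deg(f) e^{\inf\phi}$, so that Corollary~\ref{c.Lasota.Yorke} is applicable at this value of the parameter. For any $g \in V_\theta$ with $\|g\|_\theta \leq 1$, the probability measure $\nu$ gives $\|g\|_{L^1(\nu)} \leq \|g\|_\infty \leq 1$ and $\var(g) \leq 1$, so
\[
\var(\cL_\phi^n g) \leq \la_0^n \bigl( D_1 \xi_1^n + D_2 \bigr),
\]
while the positivity bound $|\cL_\phi^n g| \leq \|g\|_\infty \cL_\phi^n 1$ gives $\|\cL_\phi^n g\|_\infty \leq \|\cL_\phi^n 1\|_\infty$. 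Adding these estimates,
\[
\|\cL_\phi^n\|_\theta \leq \|\cL_\phi^n 1\|_\infty + \la_0^n \bigl( D_1 \xi_1^n + D_2 \bigr).
\]
Taking $n$-th roots and letting $n \to \infty$, using $\|\cL_\phi^n 1\|_\infty^{1/n} \to r(\cL_\phi)$ and $\xi_1 \in (0,1)$, I would conclude $r_\theta(\cL_\phi) \leq \max\{r(\cL_\phi), \la_0\} = \la_0$. Since $\la_0$ may be chosen arbitrarily close to $r(\cL_\phi)$ from above, the inequality $r_\theta(\cL_\phi) \leq r(\cL_\phi)$ follows.

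I do not anticipate a real obstacle. The only minor subtlety is that one should work with $\la_0 > r(\cL_\phi)$ rather than directly at $\la_0 = r(\cL_\phi)$ and then let $\la_0 \downarrow r(\cL_\phi)$, which covers the degenerate case where $r(\cL_\phi)$ might coincide with the baseline $\deg(f) e^{\inf\phi}$ (in which case the hypothesis of Corollary~\ref{c.Lasota.Yorke} would fail at $\la_0 = r(\cL_\phi)$ itself).
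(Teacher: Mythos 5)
Your proof is correct, and it differs from the paper's in two minor but genuine ways. For the lower bound $r(\cL_\phi)\leq r_\theta(\cL_\phi)$ you argue via positivity of $\cL_\phi$ on $C(M)$, using $\|\cL_\phi^n\|_{C(M)}=\|\cL_\phi^n 1\|_\infty\leq\|\cL_\phi^n\|_\theta$; the paper instead uses conformality of $\nu$, i.e.\ $\|\la^{-n}\cL_\phi^n 1\|_{L^1(\nu)}=1$, which gives the same conclusion. For the upper bound the paper applies Corollary~\ref{c.Lasota.Yorke} \emph{directly} at $\la_0=r(\cL_\phi)$ to get a uniform bound $\|\la^{-n}\cL_\phi^n g\|_\theta\leq C\|g\|_\theta$, whereas you apply it at every $\la_0>r(\cL_\phi)$, bound the $L^\infty$ part separately by $\|\cL_\phi^n 1\|_\infty$ via positivity, take $n$-th roots, and then let $\la_0\downarrow r(\cL_\phi)$. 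Your route is slightly more robust on two counts: it makes the control of the $\|\cdot\|_\infty$ part of $\|\cdot\|_\theta$ explicit (the paper is terse here; it silently needs Lemma~\ref{l.sup.osc.L1} with $E=M$ together with $\|\la^{-n}\cL_\phi^n g\|_{L^1(\nu)}\leq\|g\|_{L^1(\nu)}$), and it avoids any worry about whether the Lasota-Yorke hypothesis is satisfied at $\la_0=r(\cL_\phi)$ exactly. That said, the "degenerate case" you flag is not actually an obstacle for the paper: the conditions actually invoked in the Lasota-Yorke proof are $\theta\la_0>L^\alpha$ and $\theta\la_0 e^{-(\sup\phi-\inf\phi)}>L^\alpha$, and these follow from $(\star)$ already at $\la_0=\deg(f)e^{\inf\phi}$, hence a fortiori at $\la_0=r(\cL_\phi)$. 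The trade-off is that the paper's version yields the stronger statement of uniform boundedness of $\{\la^{-n}\cL_\phi^n\}_n$ in $V_\theta$, which is reused in the subsequent quasi-compactness argument, while your version proves exactly the spectral-radius equality asserted in the lemma and nothing more.
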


\begin{proof}
The spectral radius $r(\cL)$ of the linear operator $\cL_\phi$ in
the space $C(M)$ of continuous functions is clearly greater or equal
than $\deg(f) e^{\inf\phi}$. Thus, the Lasota-Yorke inequality in
Corollary~\ref{c.Lasota.Yorke} with $\la=r(\cL)$ guarantees that
there exists a uniform constant $C>0$ such that $\var(\la^{-n}
\cL^{n} g) \leq C \|g\|_\theta$ for every $n \in \mathbb N$. Using
$\|\cdot\|_1 \leq \|\cdot\|_\infty$, this proves that there exists a
uniform constant $C>0$ such that $\|\la^{-n} \cL^{n} g\|_\theta \leq
C \|g\|_\theta$ for every $g\in V_\theta$ and $n \in \mathbb N$. In
consequence, the spectral radius $r_\theta(\la^{-1}\cL_\phi)$ of
$\cL_\phi$ in $V_\theta$ verifies
$$
r_\theta(\la^{-1} \cL_\phi) \leq 1.
$$
On the other hand, using once more the conformality of the measure
$\nu$, we get
$$
\|\la^{-n}\cL^n 1\|_\theta
    \geq \|\la^{-n}\cL^n 1\|_\infty
    \geq \|\la^{-n}\cL^n 1\|_{L^1(\nu)}
    =1
$$
for every integer $n\geq 1$, which proves that
$r_\theta(\la^{-1}\cL) \geq 1$. The two estimates above imply
 $
r_\theta(\la^{-1}\cL_\phi)
    =\la^{-1}r_\theta(\cL_\phi)
    =1,
 $
which shows that $r_\theta(\cL_\phi)=\la=r(\cL_\phi)$ and completes
the proof of the lemma.
\end{proof}

We are now in a position to prove the quasi-compactness of the
operator $\la^{-1}\cL$ and, moreover, that it has a spectral gap.

\begin{proposition}
$r_\theta(\la^{-1}\cL_\phi)=1$ and the spectrum
$\si(\la^{-1}\cL_\phi)$ of the operator $\la^{-1}\cL_\phi$ in
$V_\theta$ satisfies
$$
\si(\la^{-1}\cL_\phi)
    \subseteq \big\{ z \in \mathbb C : |z| \leq 1\big\}.
$$
Moreover, $1$ is a simple eigenvalue for $\la^{-1}\cL_\phi$, there
are no other eigenvalues of modulus one and the essential spectral
radius $r_{\text{ess}}(\la^{-1} \cL_\phi)$ is strictly smaller than
one. Furthermore, the density $d\mu/d\nu$ belongs to $V_\theta$.
\end{proposition}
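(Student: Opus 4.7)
The plan is to combine Lemma~\ref{l.compact.approximation} with the Lasota--Yorke estimate (Corollary~\ref{c.Lasota.Yorke}) to derive quasi-compactness and the spectral gap, and then to exploit the exactness of $\mu$ (Lemma~\ref{l.exactness}) in order to pin down the peripheral spectrum.

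\textbf{Essential spectral radius.} First I would observe that Lemma~\ref{l.compact.approximation} writes $\la^{-n}\cL_\phi^n = A_n + R_n$, where $A_n$ has finite rank on $V_\theta$ (and is therefore compact) and $\|R_n\|_\theta \le C\xi_1^n$. Since the essential spectrum is invariant under compact perturbation,
\[
r_{\text{ess}}(\la^{-n}\cL_\phi^n)=r_{\text{ess}}(R_n)\le \|R_n\|_\theta \le C\xi_1^n,
\]
and together with the identity $r_{\text{ess}}(T^n)=r_{\text{ess}}(T)^n$ this yields $r_{\text{ess}}(\la^{-1}\cL_\phi)\le C^{1/n}\xi_1$ for every $n$, hence $r_{\text{ess}}(\la^{-1}\cL_\phi)\le \xi_1<1$. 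Combined with the previous lemma $r_\theta(\la^{-1}\cL_\phi)=1$, this immediately gives quasi-compactness, the inclusion $\sigma(\la^{-1}\cL_\phi)\subset\{z\in\mathbb C:|z|\le 1\}$, and that the peripheral spectrum consists of finitely many eigenvalues of finite multiplicity.

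\textbf{Identification of the invariant density.} Next I would look at the Ces\`aro averages $h_n:=\frac1n\sum_{k=0}^{n-1}\la^{-k}\cL_\phi^k\mathbf 1$, which are bounded in $V_\theta$ by the Lasota--Yorke inequality applied to $g=\mathbf 1$. By quasi-compactness, a subsequence converges in $V_\theta$ to a nonnegative fixed point $h_\star$ of $\la^{-1}\cL_\phi$, normalised so that $\int h_\star\,d\nu=1$ thanks to $\cL_\phi^*\nu=\la\nu$. Then $h_\star\,\nu$ is an $f$-invariant probability measure absolutely continuous with respect to $\nu$, hence an equilibrium state, and by Theorem~\ref{t.OV2} it must coincide with $\mu$. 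This gives $d\mu/d\nu=h_\star\in V_\theta$.

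\textbf{Simplicity and absence of other peripheral eigenvalues.} This is the step I expect to be the main obstacle, since it requires translating the dynamical exactness of $\mu$ into a spectral statement on $V_\theta$. Since the density $h:=d\mu/d\nu$ is bounded away from $0$ and $\infty$ (as recalled in Subsection~3.2 following \cite{OV07}), the normalised operator
\[
\widetilde\cL g \,:=\, \frac{1}{\la\,h}\,\cL_\phi(gh)
\]
is well defined on $L^\infty(\mu)$ and acts as the dual of the Koopman operator on $L^2(\mu)$: $\int (\widetilde\cL g)\,\varphi\,d\mu=\int g\,(\varphi\circ f)\,d\mu$. Any eigenvector $u\in V_\theta$ of $\la^{-1}\cL_\phi$ with eigenvalue $e^{i\vartheta}$ of modulus one therefore produces $v:=u/h\in L^\infty(\mu)$ satisfying $v\circ f = e^{i\vartheta}\,v$ almost everywhere. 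Ergodicity of $\mu$ (implied by exactness) forces $|v|$ to be constant, and the absence of nontrivial $L^2(\mu)$-eigenvalues for the weakly mixing (indeed exact) system $(f,\mu)$ forces $v$ to be constant and $e^{i\vartheta}=1$. This yields both simplicity of the eigenvalue $1$ and the absence of other peripheral eigenvalues, completing the proof.
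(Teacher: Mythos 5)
Your proposal follows essentially the same route as the paper: you bound the essential spectral radius via the compact approximation $A_n$ (the paper invokes Nussbaum's formula, you invoke compact-perturbation invariance of the essential spectrum — these are the same fact); you produce the density through Ces\`aro averages of $\mathbf 1$ and uniqueness of the equilibrium state; and you kill the peripheral spectrum by passing to the normalised transfer operator, identifying it as the dual of the Koopman operator on $L^2(\mu)$, and invoking exactness (the paper phrases this through $f^{-n}\mathcal B$-measurability and triviality of the tail $\sigma$-algebra, you phrase it through weak mixing and absence of nontrivial Koopman eigenvalues — equivalent). The only step that deserves to be made explicit, both in your write-up and in the paper's, is the passage from $U^*v=e^{i\vartheta}v$ to $v$ being a Koopman eigenvector: this uses that $U$ is an isometry with $UU^*=\mathbb E(\,\cdot\,|f^{-1}\mathcal B)$, whence $\|U^*v\|=\|v\|$ forces $\mathbb E(v|f^{-1}\mathcal B)=v$, so $v=Uw$ and $v\circ f=e^{-i\vartheta}v$ (note the conjugate eigenvalue, harmless here since one concludes $e^{i\vartheta}=1$).
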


\begin{proof}
Using Naussbaum's formula for the essential spectral radius (see
e.g. ~\cite{DSI} Page 709), which asserts that
$$
r_{\text{ess}}(\la^{-1} \cL_\phi)
    = \lim_{n \to \infty} (\inf \{\|\la^{-n}\cL^n-L\|_\theta : L \,\text{is compact operator}\,\})^\frac1n,
$$
and Lemma~\ref{l.compact.approximation} it follows that
$r_{\text{ess}}(\la^{-1} \cL_\phi) \leq \xi_1$ is strictly smaller
than one. Hence, there is only a finite number of eigenvalues with
finite-dimensional eigenspaces in $\{ z \in \mathbb \si(\la^{-1}
\cL) : |z| > r_{\text{ess}}\}$. Since $r_\theta(\la^{-1}
\cL_\phi)=1$ there must exist some eigenvalue on the unit circle,
and we can write
$$
\la^{-1}\cL_\phi =\Pi_1
        + \sum_{\substack{z\in \si(\la^{-1} \cL_\phi) \\ |z| =1}} z \,\Pi_{z}
        + L_0
$$
where $\Pi_z$ denotes the projection on the subspace associated to
the eigenvalue $z \in \mathbb C$ and $r(L_0)<1$. Using that
$\sum_{j=0}^{n-1} z^j$ is uniformly bounded in norm for every $n$ it
follows that
$$
\Big\| \frac1n \sum_{j=0}^{n-1} \la^{-j}\cL_\phi^j - \Pi_1
\Big\|_\theta \xrightarrow[n \to \infty]{} 0.
$$
On the other hand, using $\|\cdot\|_\theta \geq \|\cdot\|_\infty
\geq \|\cdot\|_{L^1(\nu)}$ one gets
$$
 \big\| \frac1n \sum_{j=0}^{n-1} \la^{-j}\cL_\phi^j 1\big\|_\theta
    %\geq \big\| \frac1n \sum_{j=0}^{n-1} \la^{-j}\cL_\phi^j 1 \big\|_\infty
    \geq \big\| \frac1n \sum_{j=0}^{n-1} \la^{-j}\cL_\phi^j 1 \big\|_{L^1(\nu)}
    =1, \quad\text{for every $n \in \mathbb N$.}
$$
This shows that $\Pi_1$ is nonzero and that $h=\Pi_1(1)$ is an
eigenfunction for $\la^{-1}\cL_\phi$ associated to the eigenvalue
$1$. Up to a normalization in $L^1(\nu)$ it is not difficult to see
that $\hat \mu=h\nu$ is an $f$-invariant probability measure: for
every $g \in C(M)$
$$
\int g\circ f \,d\hat \mu
    = \int \la^{-1} \cL_\phi( g\circ f h) \,d\nu
    = \int \la^{-1} \cL_\phi(h) g \,d\nu
    = \int g \,d\hat \mu.
$$
Since $\hat\mu$ is absolutely continuous with respect to $\nu$ then
it is an equilibrium state. By uniqueness of the equilibrium state,
$\hat\mu$ must coincide with $\mu$ and, in particular, $d\mu/d\nu=h$
belongs to $V_\theta$. In fact the same argument yields that $1$ is
a simple eigenvalue, thus, the only eigenvalue of modulus one.

It remains only to rule out the existence of other eigenvalues of
modulus one distinct from $1$. Let $z \in \mathbb C$ and $h' \in
V_\theta$ be such that $\la^{-1}\cL_\phi h'=z h'$ and $|z|=1$. Since
$h$ is bounded from below by some constant $C_1>0$ the function
$\psi$ defined by $h'=h \psi$ belongs to $L^2(\mu)$ because
 $
\|\psi\|_{L^2(\mu)}
        \leq \|h'\|_\infty^2 / {C_1^2}
        <\infty.
 $
The Koopman operator $U:L^2(\mu) \to L^2(\mu)$ acts on each $g \in
L^2(\mu)$ by $U(g)=g \circ f$. By construction $\psi$ is an
eigenfunction for the dual operator $U^*$. Indeed, $U^*(\psi)=z
\psi$ because
$$
\int (U^*\psi) g \,d\mu
    %= \int \psi U(g) \,d\mu
    =\! \int \psi (g\circ f) \,d\mu
    =\! \int h'(g\circ f) \,d\nu
    =\! \int \la^{-1} \cL_\phi (h'(g\circ f)) \,d\nu
    =\! \int (z h') g \,d\nu.
    %for every $g\in L^2(\nu)$
$$
In consequence, $\psi=z^n U^n(\psi)=z^n \,(\psi \circ f^n)$ is
measurable with respect to the sigma-algebra $f^{-n}(\cB)$ for every
$n \geq 1$. Since $\mu$ is exact (recall Lemma~\ref{l.exactness})
then $\psi$ must be constant, which proves that $h'$ belongs to the
subspace generated by $h$ and consequently $z=1$. This shows that
$1$ is the only eigenvalue of modulus one and completes the proof of
the proposition.
\end{proof}

\begin{corollary}\label{c.correlation.decay}
There are $C>0$, $\xi \in (0,1)$ such that every $\Phi \in L^1(\nu)$
and $\Psi \in V_\theta$ satisfy $ C_n(\Phi,\Psi) \leq C \xi^n
\|\Psi\|_\theta \|\Phi\|_{L^1(\nu)}$. Moreover,
$$
\Big|\mu(Q_k \cap f^{-n}(Q_l)) - \mu(Q_k) \mu(Q_l)\Big|
            \leq C \xi^n \mu(Q_l)
$$
for every $n\ge 1$ and every pair of cylinders $Q_l \in \cQ^{(l)}$
and $Q_k \in \cQk$.
\end{corollary}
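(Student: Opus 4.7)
The plan is to deduce both claims from the spectral decomposition $\tilde\cL = \la^{-1}\cL_\phi = \Pi_1 + L_0$ proved in the previous proposition, where $\Pi_1 g = h\int g\,d\nu$ is the rank-one projection onto $\mathrm{span}(h)$ with $h = d\mu/d\nu \in V_\theta$, and $\|L_0^n\|_{V_\theta} \le C\xi^n$.

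First I would rewrite the correlation using the conformality identity $\int g(\psi\circ f^n)\,d\nu = \int(\tilde\cL^n g)\,\psi\,d\nu$ (immediate from $\cL_\phi^*\nu = \la\nu$). Taking $g = \Phi h$ and $\psi = \Psi$ yields
\[
\int \Phi(\Psi\circ f^n)\,d\mu = \int \tilde\cL^n(\Phi h)\,\Psi\,d\nu.
\]
Since $\Pi_1(\Phi h) = h\int\Phi h\,d\nu$ and $\int h\Psi\,d\nu = \int\Psi\,d\mu$, the product term $\int\Phi\,d\mu\int\Psi\,d\mu$ equals $\int\Pi_1(\Phi h)\,\Psi\,d\nu$, hence
\[
C_n(\Phi,\Psi) = \Bigl|\int L_0^n(\Phi h)\,\Psi\,d\nu\Bigr| \le \|L_0^n(\Phi h)\|_{L^1(\nu)}\,\|\Psi\|_\theta.
\]
The remaining task is to establish $\|L_0^n(\Phi h)\|_{L^1(\nu)} \le C'\xi^n\|\Phi\|_{L^1(\nu)}$. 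Since $\tilde\cL$ is an $L^1(\nu)$-contraction (from conformality, $\int|\tilde\cL g|\,d\nu \le \int\tilde\cL|g|\,d\nu = \int|g|\,d\nu$) and $\Pi_1$ is bounded on $L^1(\nu)$, so is $L_0$ uniformly in $n$. I would promote the $V_\theta$-decay to an $L^1(\nu)$-decay by approximating $\Phi h$ with its conditional expectation $F_N = \mathbb E_\nu[\Phi h\mid \cQ^{(N)}] \in V_\theta$, applying the $V_\theta$-spectral gap to $F_N$ and estimating the residue $\Phi h - F_N$ via the $L^1$-contraction, then balancing $N$ against $n$ using the exponential decay of $\nu(Q_N)$ from Proposition~\ref{p.decreasing.measure}.

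For the second statement I would specialize to $\Phi = 1_{Q_k}$ and $\Psi = 1_{Q_l}$. The Banach-algebra property of $V_\theta$ (a product inequality following directly from Lemma~\ref{l.oscillation.inequality}) shows that $1_{Q_k}h \in V_\theta$, with $\|1_{Q_k}h\|_\theta$ bounded uniformly in $k$ thanks to the Gibbs estimate of Lemma~\ref{l.Gibbs.at.hyp.times}, the weak Gibbs control of Lemma~\ref{l.weak.Gibbs} and the summability condition $(\star)$ on $\theta$. Applying the $V_\theta$-spectral gap directly gives
\[
|\mu(Q_k\cap f^{-n}(Q_l)) - \mu(Q_k)\mu(Q_l)| = \Bigl|\int_{Q_l} L_0^n(1_{Q_k}h)\,d\nu\Bigr| \le \nu(Q_l)\,\|L_0^n(1_{Q_k}h)\|_\infty \le C\xi^n\,\nu(Q_l),
\]
and one concludes after replacing $\nu(Q_l)$ by a constant multiple of $\mu(Q_l)$ via the two-sided bound on $h$ from \cite{OV07}.

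The main obstacle is the $L^1(\nu)$ bootstrap in the first part: the $V_\theta$-spectral gap does not automatically transfer to $L^1(\nu)$, and the density approximation requires the $L^1$-error $\|\Phi h - F_N\|_{L^1(\nu)}$ to decay fast enough to outpace the growth of $\|F_N\|_\theta$ with $N$. The cylinder case, by contrast, reduces cleanly to the $V_\theta$-gap once the uniform bound on $\|1_{Q_k}h\|_\theta$ is in hand.
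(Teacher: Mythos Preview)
Your plan for the first claim takes the statement literally, with $\Phi\in L^1(\nu)$ and $\Psi\in V_\theta$, and this forces you into an $L^1$-bootstrap that is both unnecessary and defective. The paper's own argument in fact treats $\Phi h$ as an element of $V_\theta$: it bounds the correlation by $\big\|(\la^{-n}\cL^n)(\Phi h)-h\int\Phi h\,d\nu\big\|_\theta\,\|\Psi\|_{L^1(\nu)}$ and then invokes the spectral gap directly on $\Phi h$, obtaining $C\xi^n\|\Phi h\|_\theta\,\|\Psi\|_{L^1(\nu)}$. This is consistent with the paper's earlier \emph{definition} of exponential decay (where the $V_\theta$-observable is the one not composed with $f^n$), so the roles of $\Phi$ and $\Psi$ in the corollary are simply swapped; once you read it that way the proof is a one-liner and no approximation is needed. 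Your conditional-expectation scheme, by contrast, has a genuine gap: for arbitrary $\Phi\in L^1(\nu)$ there is no rate for $\|\Phi h-F_N\|_{L^1(\nu)}$ in terms of $\|\Phi\|_{L^1(\nu)}$ (martingale convergence gives convergence but no uniform speed), and the exponential smallness of $\nu(Q_N)$ is irrelevant here --- it controls the size of cylinders, not the $L^1$-oscillation of a generic integrable function over them.

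For the cylinder inequality your route is essentially the paper's, but you over-complicate the uniform bound on $\|1_{Q_k}h\|_\theta$ by invoking Gibbs and weak-Gibbs estimates. The paper simply shows that $\|1_{Q_k}\|_\theta$ is bounded independently of $k$: by the Markov property, for $n\ge k$ every $Q_n\in\cQn$ is either contained in $Q_k$ or disjoint from it, so $\ov\osc(1_{Q_k},Q_n)=0$; for $n<k$ only the single $n$-cylinder containing $Q_k$ contributes, with term at most $(\theta e^{\sup\phi})^n$. Hence $\var(1_{Q_k})\le\sum_{n\ge 0}(\theta e^{\sup\phi})^n$, finite by the choice of $\theta$. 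One then applies the first claim (in its corrected form) with $\Phi=1_{Q_k}\in V_\theta$ and $\Psi=1_{Q_l}\in L^1(\nu)$, and converts $\|1_{Q_l}\|_{L^1(\nu)}=\nu(Q_l)$ to $\mu(Q_l)$ via the two-sided bounds on $h$.
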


\begin{proof}
Given $\Phi \in L^1(\nu)$ and $\Psi \in V_\theta$,
$$
\int \Phi (\Psi \circ f^n) d\mu
    = \int \Phi h (\Psi \circ f^n)  d\nu
    = \int (\la^{-n}\cL^n)(\Phi h) \, \Psi \,d\nu
$$
for every $n \geq 1$. Hence,
\begin{multline*}
 \Big| \int \Phi (\Psi \circ f^n) d\mu - \int \Phi d\mu \int \Psi d\mu \Big|
        = \Big| \int \Big[ (\la^{-n}\cL^n)(\Phi h)- h \int (\Phi h) \,d\nu \Big]
        \, \Psi \,d\nu \Big| \\
        \leq
        \Big\| (\la^{-n}\cL^n)(\Phi h)- h \int (\Phi h) \,d\nu
        \Big\|_\theta \|\Psi\|_{L^1(\nu)}.
\end{multline*}
Since $h \int (\Phi h) \,d\nu$ is the projection of $\Phi h$ in the
one-dimensional eigenspace associated to the eigenvalue $1$ it is a
consequence of the spectral gap that the previous term is bounded by
$C \xi^n \|\Psi\|_\theta \|\Phi\|_{L^1(\nu)}$. On the other hand,
the second claim is an immediate consequence of the first one
provided that we show that the characteristic function $1_{Q}$ of a
cylinder $Q \in \cQ^{(k)}$ belongs to $V_\theta$. Since
$\ov\osc(\cdot) \leq 2 \ov\sup(\cdot)$, it is clear that
$$
\var(1_{Q})
    = \sum_{n \geq 0} \theta^n \sum_{Q_n} e^{S_n\phi(Q_n)}
    \ov\osc(1_{Q}, Q_n) \leq 2 \sum_{n \geq 0} (\theta
    e^{\sup\phi})^n
$$
is finite. In consequence $\|1_{Q}\|_\theta= 1+\var(1_{Q})$ is also
finite, which proves our claim and finishes the proof of the
corollary.
\end{proof}

Finally, to complete the proof of Corollary~\ref{c.CLT} it is enough
to prove the following:

\begin{lemma}\label{l.Vtheta.contains.Holder}
$V_\theta$ contains the space of $\alpha$-H\"older observables.
\end{lemma}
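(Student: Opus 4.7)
The plan is to show directly that $\|g\|_\theta<\infty$ for every $\alpha$-H\"older continuous $g:M\to\real$. The sup-norm part $\|g\|_\infty$ is finite since $M$ is compact, so everything comes down to bounding
$$
\var(g)=\sum_{n\ge 1}\theta^n\sum_{Q_n\in\cQn}e^{S_n\phi(Q_n)}\,\ov\osc(g,Q_n).
$$
For each $n$ I would split the inner sum into the ``good'' cylinders $Q_n\notin B(n)$ and the ``bad'' cylinders $Q_n\in B(n)$, and estimate the two pieces separately using the tools already developed in Section~\ref{sec.preliminaries}.

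On the good side, Corollary~\ref{c.diameter.vs.positive.frequency} gives $\diam(Q_n)\le e^{-c\tau n}\diam(\cQ)$, so the H\"older property produces a constant $C_g>0$ with $\ov\osc(g,Q_n)\le C_g e^{-c\tau\alpha n}$. Combining with the trivial bound $\sum_{Q_n}e^{S_n\phi(Q_n)}\le \#\cQn\, e^{n\sup\phi}\le \#\cQ\,e^{n(\log\deg(f)+\sup\phi)}$ (recall Remark~\ref{rem.cardinal.Qn}), the good part contributes at most
$$
C_g\,\#\cQ\,\diam(\cQ)^\alpha\sum_{n\ge 1}\bigl(\theta\,e^{\log\deg(f)+\sup\phi}e^{-c\tau\alpha}\bigr)^n,
$$
which is finite thanks to the third inequality in $(\star)$.

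On the bad side I would use the brutal estimate $\ov\osc(g,Q_n)\le 2\|g\|_\infty$ together with Proposition~\ref{p.decreasing.measure}, which for all large $n$ gives $\#B(\ga,n)\le e^{(\log q+\vep_0)n}$. Thus the bad part is bounded by a constant multiple of
$$
\|g\|_\infty\sum_{n\ge 1}\bigl(\theta\,e^{\log q+\vep_0+\sup\phi}\bigr)^n.
$$
The anticipated obstacle is verifying the summability of this second series; it does not follow directly from $(\star)$. However, combining $\theta\,e^{\log\deg(f)+\sup\phi}e^{-c\tau\alpha}<1$ from $(\star)$ with the basic inequality $\log q+c\tau\alpha+\vep_0<\log\deg(f)$ (which holds by the choice of $c$ in \eqref{eq.relation.expansion}, since $0<\tau<1$), one gets $\theta\,e^{\log q+\vep_0+\sup\phi}<1$, so the geometric series converges. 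Adding the two contributions bounds $\var(g)$ by a multiple of $\|g\|_\infty+C_g$, proving $g\in V_\theta$.
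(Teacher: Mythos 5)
Your proof is correct and follows essentially the same route as the paper: split the sum over $\cQn$ into cylinders in $B(n)$ and not in $B(n)$, use the diameter decay (Corollary~\ref{c.diameter.vs.positive.frequency}) plus the H\"older condition on the good part, and the trivial oscillation bound plus the cardinality estimate for $B(n)$ on the bad part. The only addition you make is to spell out explicitly that $\theta\,e^{\log q+\vep_0+\sup\phi}<1$ follows from $(\star)$ together with $\log q + c\tau\al + \vep_0 < \log\deg(f)$ — a correct check that the paper leaves implicit.
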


\begin{proof}
Let $g$ be an $\alpha$-H\"older continuous observable. Since
$\|g\|_\infty$ is finite it remains to estimate $\var (g)$. Indeed,
dividing the sum of the elements in $\cQn$ according to wether they
belong to $B(n)$ or not, it is not hard to check that there is
$C_g>0$ such that
$$
\var(g)
    \leq  2 \|g\|_\infty \sum_{n \geq 0} \theta^n \# B(n) e^{\sup\phi n}
    + C_g \sum_{n \geq 0} \theta^n e^{(\log\deg(f)+\sup\phi-c\tau\al)n}.
$$
This proves the lemma.
\end{proof}

%%%%%%%%%%%%%%%%%%%%%%%%%%%%%%%%%%%%%%%%%%%%%%%%%%%%%%%%%%%%%%%%%%%%%%%%%
\section{Exponential distribution of hitting
times}\label{sec.exponentialdistribution}

In this section we combine ideas from \cite{GaSc97} and \cite{Pac00}
with the weak Gibbs property and the strong mixing properties of the
equilibrium state $\mu$ to study the hitting times asymptotics in
Theorem~\ref{thm.first.entrance.distribution}. For notational
simplicity, given $Q\in\cQn$ we set
$$
g_Q(t)=\mu\Big(\tau_Q > \frac{t}{\mu(Q)} \Big).
$$
The strategy to prove Theorem~\ref{thm.first.entrance.distribution}
is to consider a large subset $\cQn_\vep$ of $n$-cylinders such that
$g_Q(\sqrt{\mu(Q)})$ behaves essentially as $\exp(-\mu(Q))$ for
every $Q \in \cQn_\vep$ and to explore the strong mixing property of
$\mu$ to obtain many instants of independence. We will need some
preliminary results.

\begin{lemma}\cite[Lemma 2]{GaSc97}\label{l.Pac1}
For every measurable set $E$ and all positive $t$,
$$
\mu\Big(\tau_E \leq \frac{t}{\mu(E)}\Big) \leq t + \mu(E).
$$
\end{lemma}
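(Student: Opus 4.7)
The plan is to prove this via a direct union bound combined with $f$-invariance of $\mu$, which is the standard Galves-Schmitt approach.

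First I would observe that the hitting time is integer valued, so
$$
\Big\{ x : \tau_E(x) \leq \frac{t}{\mu(E)} \Big\}
    = \Big\{ x : \tau_E(x) \leq N \Big\}
    = \bigcup_{k=1}^{N} f^{-k}(E),
$$
where $N = \lceil t/\mu(E) \rceil$, using the definition of $\tau_E$.

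Next, applying subadditivity of $\mu$ together with the $f$-invariance (so $\mu(f^{-k}(E)) = \mu(E)$ for every $k \geq 1$), I would estimate
$$
\mu\Big( \tau_E \leq \frac{t}{\mu(E)} \Big)
    \leq \sum_{k=1}^{N} \mu(f^{-k}(E))
    = N \, \mu(E).
$$
Finally, since $N \leq t/\mu(E) + 1$, one concludes $N\mu(E) \leq t + \mu(E)$, which gives the claimed inequality.

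There is no real obstacle here; the only point to be careful about is the choice of ceiling (rather than floor) when rounding $t/\mu(E)$ to an integer, which is precisely what accounts for the extra $\mu(E)$ term in the bound. The proof uses only measure invariance and countable subadditivity, and does not require any of the hyperbolicity, Gibbs, or spectral gap machinery developed earlier in the paper.
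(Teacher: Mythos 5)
The paper does not provide its own proof of this lemma; it simply cites \cite[Lemma~2]{GaSc97}, so the comparison is really against the standard Galves--Schmitt argument. Your proof is indeed that standard argument: decompose the hitting-time event into preimages, apply subadditivity and invariance, and bound $N$. The only slip is the asserted \emph{equality}
$\{\tau_E \le t/\mu(E)\} = \{\tau_E \le N\}$ with $N = \lceil t/\mu(E)\rceil$: when $t/\mu(E)$ is not an integer this is only an inclusion $\subseteq$, since $\tau_E$ is integer-valued and $\{\tau_E\le t/\mu(E)\} = \{\tau_E\le \lfloor t/\mu(E)\rfloor\}$, which is generally a strictly smaller set than $\{\tau_E\le \lceil t/\mu(E)\rceil\}$. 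That said, the inclusion is all the subsequent chain of inequalities uses, so the proof is still valid; also note that with this paper's convention $\tau_E \ge 1$ you could take $N = \lfloor t/\mu(E)\rfloor$ and get the even sharper bound $\le t$, so the extra $\mu(E)$ term is not actually ``accounted for'' by the ceiling here but is simply slack in the stated inequality.
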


If $\gamma_1>0$ is given by Proposition~\ref{p.decreasing.measure}
then we have the following:

\begin{lemma}\label{l.special.cylinders}
There is $K>0$ such that for any $\vep>0$ there is a subset
$\cQn_\vep$ of $n$-cylinders of measure at least $1-\vep$ and
satisfying
\begin{equation}\label{eQ}
e^{-\sqrt{\mu(Q)} (1 + K e^{-\ga_1 n/2})}
    \leq
g_Q(\sqrt{\mu(Q)})
    \leq
e^{-\sqrt{\mu(Q)} (1 - K e^{-\ga_1 n/2})}
\end{equation}
for every $Q \in \cQn_\vep$ and every large $n$.
\end{lemma}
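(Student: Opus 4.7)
The plan is to approximate $g_Q(\sqrt{\mu(Q)}) = \mu(\tau_Q > N)$ for $N = \lfloor 1/\sqrt{\mu(Q)}\rfloor$ by using a block decomposition of the time axis into ``work blocks'' of some length $\ell_n$ separated by ``gaps'' of length $g_n$, decoupling the blocks with exponential mixing (Corollary~\ref{c.correlation.decay}), and then performing a first-order expansion on each block. Throughout, I will use that $\mu\asymp\nu$ with bounded density (so $\mu(Q_n)\lesssim e^{-\ga_1 n}$) and that characteristic functions of cylinders belong to $V_\theta$ (as noted at the end of Section~\ref{sec.spectral.gap}).

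\textbf{Step 1 (selection of $\cQn_\vep$).} Using the non-lacunary Gibbs property for $\mu$ (Theorem~\ref{t.OV2}), Corollary~\ref{c.decay.Kn}, and the exponential bound $\nu(B(\ga,n))\to 0$ from Proposition~\ref{p.decreasing.measure}, I would set $\cQn_\vep$ to consist of those $n$-cylinders $Q\notin B(\ga,n)$ for which $\mu(Q)$ is comparable to $\exp(-Pn+S_n\phi)$ within the Gibbs constants. A union bound together with Borel--Cantelli (and choosing $n$ larger than some threshold $N_\vep$) gives $\mu(\bigcup\cQn_\vep)\ge 1-\vep$. Crucially, these cylinders satisfy uniform control $\mu(Q)\le C e^{-\ga_1 n}$ and $\|1_Q\|_\theta$ is uniformly bounded in $n$.

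\textbf{Step 2 (block decomposition).} Pick a gap length $g_n$ and a block length $\ell_n$ with $\ell_n \mu(Q)\ll 1$ and $g_n$ large enough for mixing to take effect; a natural choice is $g_n$ of order $n$ (so that $\xi^{g_n}$ beats any polynomial factor) and $\ell_n$ of order $n^a$ for a suitable $a$. Partition $\{0,1,\dots,N-1\}$ into $L\approx N/(\ell_n+g_n)$ consecutive pairs of a block $I_i$ of size $\ell_n$ followed by a gap of size $g_n$, and set $A_i=\bigcap_{k\in I_i}f^{-k}(M\setminus Q)$.

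\textbf{Step 3 (decoupling via exponential mixing).} The event $\{\tau_Q>N\}$ coincides with $\bigcap_{i=0}^{L-1}A_i$ up to an end-effect of measure $O(g_n\mu(Q))$. Applying Corollary~\ref{c.correlation.decay} inductively to the $\mu$-integrable pair $\Phi=1_{\bigcap_{j<i}A_j}$ and $\Psi=1_{A_0}$ (after translating $A_i$ back via invariance), I obtain
\[
\mu\Big(\bigcap_{i=0}^{L-1}A_i\Big)
 = \mu(A_0)^L\,\bigl(1+O(L\,\xi^{g_n}\|1_Q\|_\theta)\bigr).
\]

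\textbf{Step 4 (linear approximation on a block).} The lower bound on $\mu(A_0)=\mu(\tau_Q>\ell_n)$ is immediate from Lemma~\ref{l.Pac1}:
\[
\mu(A_0)\ge 1-\ell_n\mu(Q)-\mu(Q).
\]
For the matching upper bound I would use inclusion--exclusion,
\[
\mu(\tau_Q\le\ell_n)\ge \ell_n\mu(Q)-\sum_{0\le j<k<\ell_n}\mu(f^{-j}(Q)\cap f^{-k}(Q)),
\]
and estimate the double intersections by splitting $k-j$ according to whether it exceeds some threshold: short times contribute $O(\ell_n\,\text{(max }\mu(Q)\,))$ which is negligible, and long times $k-j\ge \log n$ are bounded by $\mu(Q)^2$ up to mixing error, via Corollary~\ref{c.correlation.decay}.

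\textbf{Step 5 (assembly).} Combining Steps 3 and 4 with $L\,\ell_n\mu(Q)\approx N\mu(Q)=\sqrt{\mu(Q)}$ and taking logarithms, the expansion $\log(1-x)=-x+O(x^2)$ yields
\[
\log\mu(\tau_Q>N)
 = -\sqrt{\mu(Q)}+O\bigl(\sqrt{\mu(Q)}\cdot(\ell_n\mu(Q)+L\xi^{g_n}+\cdots)\bigr).
\]
The free parameters $\ell_n,g_n$ are then tuned so that the bracket is $O(e^{-\ga_1 n/2})$; since $\sqrt{\mu(Q)}\le e^{-\ga_1 n/2}$ and the relevant errors are of order $\ell_n\mu(Q)\sim n^a e^{-\ga_1 n}$ and $\xi^{g_n}\sim \xi^{n}$, this is exactly the regime where $e^{-\ga_1 n/2}$ dominates.

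The main obstacle will be Step 4, specifically controlling the double intersections without losing the factor $e^{-\ga_1 n/2}$: the mixing estimate from Corollary~\ref{c.correlation.decay} is available only for one $L^1$ factor and one $V_\theta$ factor, so one must carefully iterate it and use that $\|1_Q\|_\theta$ is uniformly bounded on $\cQn_\vep$. The other delicate point is keeping $\ell_n$ short enough that $\ell_n\mu(Q)\ll \sqrt{\mu(Q)}\,e^{-\ga_1 n/2}$, yet long enough that $L\xi^{g_n}$ is controllable with a comparable gap.
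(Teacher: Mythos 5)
Your Step~1 selects the wrong family $\cQn_\vep$, and this is a genuine gap in the upper bound on $g_Q(\sqrt{\mu(Q)})$ in \eqref{eQ} (equivalently, the lower bound on the hitting probability). The paper does not define $\cQn_\vep$ via the Gibbs property or by excluding $B(\ga,n)$; it takes $\cQn_\vep$ to be the $n$-cylinders with \emph{no short self-returns}, i.e.\ $f^j(Q)\cap Q=\emptyset$ for all $1\le j\le \zeta n$, for some small $\zeta=\zeta(\vep)>0$. The complement is small because a cylinder with a self-return at lag $\le\zeta n$ is determined by its first $\zeta n$ symbols, so there are at most $(\#\cQ)^{\zeta n}$ of them, each of $\mu$-measure $\lesssim e^{-\ga_1 n}$. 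Without this condition your Step~4 breaks: in the inclusion--exclusion
\[
\mu(\tau_Q\le\ell_n)\ \ge\ \ell_n\mu(Q)\ -\ \sum_{m=1}^{\ell_n-1}(\ell_n-m)\,\mu\big(Q\cap f^{-m}(Q)\big),
\]
a cylinder with, say, $Q\cap f^{-1}(Q)\neq\emptyset$ may have $\mu(Q\cap f^{-1}Q)$ comparable to $\mu(Q)$, so the $m=1$ term alone is of order $\ell_n\mu(Q)$ and wipes out the main term. Your claim that ``short times contribute $O(\ell_n\max\mu(Q))$ which is negligible'' is false in general; exponential mixing $\mu(Q\cap f^{-m}Q)\le\mu(Q)^2+C\xi^m\|1_Q\|_\theta\mu(Q)$ only becomes useful once $m$ is of order $n$, and the short-lag terms must be removed combinatorially, not analytically. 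Conditioning on Gibbs comparability or on $Q\notin B(\ga,n)$ does not remove a single self-return and cannot rescue the argument. (Note also that the lower bound on $g_Q$ in \eqref{eQ} holds for \emph{all} cylinders, using only $-\log(1-x)\le x+x^2$, Lemma~\ref{l.Pac1} and $\mu(Q)\lesssim e^{-\ga_1 n}$; the selection of special cylinders is needed solely for the other inequality.)

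A secondary caveat: in Step~3 the $V_\theta$ factor you feed into Corollary~\ref{c.correlation.decay} is $1_{A_0}=\prod_{k\in I_0}1_{Q^c}\circ f^k$, not $1_Q$. The uniform bound $\|1_Q\|_\theta\le C$ does not transfer; $\|1_{A_0}\|_\theta$ grows with the block length $\ell_n$ (compare the estimate $\var(\la^{-N}\cL^N_{Q^c}h)\le C''(N+N^2)\var(h)$ inside the paper's proof of Lemma~\ref{l.independence}), so you must track this polynomial growth when tuning $g_n$. Aside from these two points, your block-decoupling plan is a legitimate alternative to the paper's shorter route: the paper instead quotes a second-moment inequality from Galves--Schmitt/Paccaut to lower-bound $\mu(\tau_Q\le t/\mu(Q))$ directly for self-return-free cylinders, without any block structure at this stage; the blocks only appear later in the proof of Theorem~\ref{thm.first.entrance.distribution}.
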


\begin{proof}
First observe that if $n$ is large enough and $Q\in \cQn$ arbitrary
\begin{align*}
-\log g_Q(\sqrt{\mu(Q)})
        & = - \log \Big[1- \mu\Big(\tau_Q\leq \frac{\sqrt{\mu(Q)}}{\mu(Q)}\Big)\Big] \\
        & \leq \mu\Big(\tau_Q\leq\frac{\sqrt{\mu(Q)}}{\mu(Q)}\Big) +
            \Big[\mu\Big(\tau_Q\leq
            \frac{\sqrt{\mu(Q)}}{\mu(Q)}\Big)\Big]^2.
\end{align*}
Then Proposition~\ref{p.decreasing.measure} and Lemma~\ref{l.Pac1}
imply that the later sum is bounded from above by $\sqrt{\mu(Q)}(1+K
e^{-\ga_1 n/2})$ for some positive constant $K$, which proves the
lower bound in \eqref{eQ}.

In the other direction, let $\cQn_\vep$ be the family of
$n$-cylinders that have no self returns in the time interval
$[1,\zeta n]$ for some $\zeta>0$: the $n$-cylinder $Q$ belongs to
$\cQn_\vep$ if $f^j(Q)$ does not intersect $Q$ for every $1 \leq j
\leq \zeta n$. Any element in $\cQn \setminus \cQn_\vep$ has short
recurrence and is completely characterized by $\zeta n$ cylinders of
the partition $\cQ$. Consequently, $\# [\cQn \setminus \cQn_\vep
]\leq (\#\cQ)^{\zeta n}$ for every $n$. In particular, if
$\zeta=\zeta(\vep)$ is chosen small enough then
$$
\mu\Big( \cup\{Q \in \cQn : Q \not\in \cQn_\vep\} \Big)
        \leq (\# \cQ)^{\zeta n} e^{-\ga_1 n}<\vep
$$
for every large $n$. We claim that every $Q\in \cQn_\vep$ verifies
the upper bound in \eqref{eQ}. On the one hand
$$
-\log g_Q(\sqrt{\mu(Q)})
        \geq 1 - g_Q(\sqrt{\mu(Q)})
        = \mu\Big(\tau_Q\leq \frac{\sqrt{\mu(Q)}}{\mu(Q)}\Big).
$$
Following \cite[Lemma~3]{GaSc97} and \cite[Lemma~5.3]{Pac00}, one
gets
$$
\mu\Big(\tau_Q \leq \frac{t}{\mu(Q)}\Big)
    \geq \frac{t^2}{t^2+\mu(Q)(1+t)+t(1+Ke^{-\ga_1 n})}
$$
for every $Q \in \cQn_\vep$. Together with the previous inequality
this shows that
$$
-\log g_Q(\sqrt{\mu(Q)})
        \geq \frac{\mu(Q)}{\mu(Q)[2+\sqrt{\mu(Q)}]+\sqrt{\mu(Q)}[1+Ke^{-\ga_1 n}]}
$$
which is larger than $\sqrt{\mu(Q)}(1-K e^{-\ga_1 n/2})$. This
completes the proof of the lemma.
\end{proof}

The strong mixing property of $\mu$ guarantees some independence of
the system.

\begin{lemma}\label{l.independence}
There exists $C>$0 such that if $n \geq 1$ is sufficiently large
then
$$
\sup_{s \geq \sqrt{\mu(Q)}}
    \Big| g_Q(\sqrt{\mu(Q)}+s)- g_Q(\sqrt{\mu(Q)}) g_Q(s) \Big|
    \leq C \mu(Q)^{3/4}
$$
for every cylinder $Q \in \cQn$.
\end{lemma}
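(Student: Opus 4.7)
The plan is to bound $|\mu(A_{a+b}) - \mu(A_a)\mu(A_b)|$ where $A_k := \{\tau_Q > k\}$, $a := \lceil 1/\sqrt{\mu(Q)}\rceil$, and $b := \lceil s/\mu(Q)\rceil$. Since $s \geq \sqrt{\mu(Q)}$ one has $a, b \geq 1/\sqrt{\mu(Q)}$, and up to a rounding error of order $\mu(Q)$ this is exactly the quantity in the lemma. The strategy is to insert a small gap of iterations to decouple the two events and then invoke the spectral-gap decay of correlations (Corollary~\ref{c.correlation.decay}).

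First, I would insert a gap $g$ (to be chosen of order $n$). Since $A_{a+b} \subseteq E_g := A_a \cap f^{-(a+g)}(A_{b-g})$ and $E_g \setminus A_{a+b} \subseteq \bigcup_{j=1}^g f^{-(a+j)}(Q)$, the $f$-invariance of $\mu$ yields $0 \leq \mu(E_g) - \mu(A_{a+b}) \leq g\mu(Q)$, and an analogous bound gives $0 \leq \mu(A_{b-g}) - \mu(A_b) \leq g\mu(Q)$. Hence it suffices to bound $|\mu(E_g) - \mu(A_a)\mu(A_{b-g})|$.

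Next, I would decompose $A_a = \bigsqcup_{Q_k \in \cA} Q_k$ with $\cA \subseteq \cQ^{(a+n-1)}$ and $A_{b-g}$ analogously as a union of cylinders in $\cQ^{(b-g+n-1)}$, and apply the cylinder form of Corollary~\ref{c.correlation.decay} pairwise: each pair satisfies $|\mu(Q_k \cap f^{-(a+g)}(Q_l)) - \mu(Q_k)\mu(Q_l)| \leq C\xi^{a+g}\mu(Q_l)$. Summing over $(Q_k, Q_l)$ gives $|\mu(E_g) - \mu(A_a)\mu(A_{b-g})| \leq C\xi^{a+g}\,\#\cA\,\mu(A_{b-g})$. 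Finally I would choose $g = \lceil C_1 n\rceil$ with $C_1$ large enough: since $a \geq 1/\sqrt{\mu(Q)}$ is super-exponential in $n$ (using $\mu(Q) \leq e^{-\ga_1 n}$ from Proposition~\ref{p.decreasing.measure}), the factor $\xi^{a+g}$ is doubly-exponentially small in $n$, absorbing $\#\cA$, while the gap error $g\mu(Q) \leq C_1 n\,e^{-\ga_1 n}$ is comfortably smaller than $\mu(Q)^{3/4}$ for $n$ large.

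The hard part will be controlling the product $\xi^{a+g}\,\#\cA$, because the crude bound $\#\cA \leq \#\cQ\,\deg(f)^{a+n-1}$ from Remark~\ref{rem.cardinal.Qn} may be too coarse---the product effectively requires $\xi\deg(f)<1$ to decay in $a$. If this is not automatic from the parameters $\theta, \la, \deg(f)$ fixed in $(\star)$ and \eqref{eq.extra.conditions.potential}, I would refine $\#\cA$ by discarding non-hyperbolic cylinders (whose total $\mu$-mass decays exponentially by Proposition~\ref{p.decreasing.measure}) and estimating the remainder using Lemma~\ref{l.Gibbs.at.hyp.times}, yielding $\#\cA \leq C\,\la^{a+n}$. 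An alternative route is to first smooth $1_{A_a}h$ via one iterate of the transfer operator: the $V_\theta$-norm of $\psi_a := \la^{-a}\cL^a(1_{A_a}h)$ is controlled by the Lasota--Yorke inequality of Corollary~\ref{c.Lasota.Yorke} together with $\|1_{A_a}h\|_{L^1(\nu)} = \mu(A_a) \leq 1$, after which the spectral gap applied over the remaining $g$ iterations yields the estimate.
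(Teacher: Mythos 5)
Your decoupling-by-a-gap-and-decay strategy is the same high-level idea as the paper, but the middle step fails and neither of your proposed rescues closes the gap. After inserting the gap, you reduce to bounding $\xi^{a+g}\,\#\cA$ where $a\sim\mu(Q)^{-1/2}$. You correctly note this only decays if $\xi\deg(f)<1$, but there is no reason for this to hold: $\xi$ is determined by the second eigenvalue of $\la^{-1}\cL_\phi$ and can be arbitrarily close to $1$. Your first fix (counting only hyperbolic cylinders) replaces $\deg(f)^a$ by roughly $\la^a$, but then one needs $\xi\la<1$, i.e.\ the \emph{unnormalized} second eigenvalue to be smaller than $1$, which again need not hold. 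Your second fix (smoothing $1_{A_a}h$ by $\la^{-a}\cL^a$ and invoking Corollary~\ref{c.Lasota.Yorke}) does not help either, because the Lasota--Yorke inequality carries the term $D_1\xi_1^a\var(1_{A_a}h)$, and $\var(1_{A_a})$ itself grows like $(\theta\la_0)^a$ (it is a union of roughly $\deg(f)^{a+n}$ cylinders over which the indicator oscillates), so the product $\xi_1^a\var(1_{A_a}h)$ is not controlled. The problem is intrinsic: any estimate that begins from a cylinder-count or a crude $V_\theta$-norm of $1_{A_a}$ picks up an exponential factor in $a$, and $a$ grows as $\mu(Q)^{-1/2}$, which is already super-exponential in $n$.

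The ingredient you are missing is the Paccaut-style telescoping identity for the restricted transfer operator $\cL_{Q^c}$, namely
$$
\cL^N_{Q^c}(h)= h - \sum_{r=0}^{N-1} \cL^r \cL_{Q}(h)
            + \sum_{0\le i+j\le N-2} \cL^i \cL_{B_{i,j}}(h),
$$
where each $B_{i,j}$ is a single cylinder contained in $Q$. This rewrites $\la^{-a}\cL^a(1_{A_a}h)$ as $h$ plus $O(N^2)$ terms, each of which is a transfer-operator image of a single characteristic function times $h$ and hence is controlled by the Lasota--Yorke inequality with a bound independent of $N$. One deduces the \emph{polynomial} estimate $\var(\la^{-N}\cL^N_{Q^c}h)\le C''(N+N^2)\var(h)$ with $N=[\sqrt{\mu(Q)}/\mu(Q)]$, after which choosing $\Delta=\mu(Q)^{-1/4}$ makes $\xi^{\Delta}(1+N^2)$ doubly exponentially small in $n$, absorbing the gap error $\Delta\mu(Q)=\mu(Q)^{3/4}$. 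Without this trade of an exponential-in-$a$ bound for a polynomial-in-$a$ bound, the argument does not close.
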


\begin{proof}
This proof is similar to the one of \cite[Lemma~5.4]{Pac00}, which
explores the strong mixing properties of the system. We include a
brief sketch of the proof for completeness reasons.

Let $Q$ be any fixed cylinder of $\cQn$. Given positive $t,s$ and a
small $\Delta$ (to be chosen later on), by invariance of $\mu$ it
follows that $|g_Q(t+s)-g_Q(t)\,g_Q(s)|$ is bounded from above by
the sum of the following three terms:
\begin{equation}\label{eq.1}
 \Bigg|g_Q(t+s)- \mu\Big( \tau_Q \notin [0,\frac{t}{\mu(Q)}] \cup [\frac{t}{\mu(Q)}+\Delta , \frac{t+s}{\mu(Q)}] \Big)\Bigg|,
\end{equation}
\begin{equation}\label{eq.2}
 \Bigg|\mu\Big( \tau_Q \notin [0,\frac{t}{\mu(Q)}] \cup [\frac{t}{\mu(Q)}+\Delta , \frac{t+s}{\mu(Q)}] \Big)- g_Q(t) \; \mu\Big(\tau_Q \notin [\Delta , \frac{s}{\mu(Q)}] \Big)\Bigg|,
\end{equation}
and
\begin{equation}\label{eq.3}
g_Q(t) \, \Bigg| \mu\Big(\tau_Q \notin [\Delta , \frac{s}{\mu(Q)}]
\Big) - \mu\Big(\tau_Q \notin [0 , \frac{s}{\mu(Q)}] \Big)\Bigg|.
\end{equation}
Since \eqref{eq.1} is the measure of the set of points that do enter
$Q$ in the time interval $[\frac{t}{\mu(Q)},
\frac{t}{\mu(Q)}+\Delta]$ then it is bounded by $\Delta \mu(Q)$.
Similarly, \eqref{eq.3} is also bounded by $\Delta \mu(Q)$. For the
remaining term, computations analogous to the ones in
\cite[page~356]{Pac00} guarantee that
$$
\eqref{eq.2}
    = |\int g_1 \, (g_2\circ f^{\De+1}) \,d\mu -\int g_1 \,d\mu \; \int g_2 \,d\mu|
    \leq C \xi^{\Delta+1} \|g_1\|_\theta \|g_2\|_1,
$$
where
$$
g_1=1_{Q^c}\,\frac1h (\la^{-1}\cL_{Q^c})^{[\frac{t}{\mu(Q)}]}(h)
 \quad \text{and}\quad
g_2=\prod_{j=0}^{[\frac{s}{\mu(Q)}]-\De-1} 1_{Q^c} \circ f^j,
$$
and $\cL_Q(g)$ stands for $\cL(g 1_Q)$. Clearly $\frac1h\in
V_\theta$, because $h \in V_\theta$ and $h$ is bounded away from
zero. Observe that $\|g_2\|_1\le 1$ and that $\|\la^{-1}\cL
h\|_\infty =\|h\|_\infty$ is finite. Hence, using that $\var(f_1
f_2) \le \var(f_1) \|f_2\|_\infty+\var(f_2) \|f_1\|_\infty$ then
\begin{align*}
 \eqref{eq.2}  & \leq  C \xi^{\De+1} \Big\| 1_{Q^c}\,\frac1h (\la^{-1}\cL_{Q^c})^{[\frac{t}{\mu(Q)}]}(h
                         )\Big\|_\theta\\
                & \leq  C \xi^{\De+1} \Bigg[
                        \var (\frac1h) \Big\|1_{Q^c}\,(\la^{-1}\cL_{Q^c})^{[\frac{t}{\mu(Q)}]}(h
                                )\Big\|_\infty \\
                & \qquad\qquad + \Big\|\frac1h \Big\|_\infty \var\Big(1_{Q^c}\,
                                (\la^{-1}\cL_{Q^c})^{[\frac{t}{\mu(Q)}]}(h
                        )\Big) \Bigg] \\
                & \leq C' \xi^{\De+1} \Big[1 + \var\Big((\la^{-1}\cL_{Q^c})^{[\frac{t}{\mu(Q)}]}(h
                        )\Big)\Big]
\end{align*}
for some positive constant $C'$. To estimate the term in the right
hand side above we use (7) in page 358 of \cite{Pac00}: for every
$N\ge 1$
$$
\cL^N_{Q^c}(h)= h - \sum_{r=0}^{N-1} \cL^r \cL_{Q}(h)
            + \sum_{0\le i+j\le N-2} \cL^i \cL_{B_{i,j}}(h),
$$
where $B_{i,j}= Q \cap f^{-1}(Q^c) \cap \dots \cap f^{-N+i+j+2}(Q^c)
\cap f^{-N+i+j+1}(Q)$ is a cylinder of order $n+N-i-j-1$  contained
in $Q$. So, using the Lasota-Yorke inequality it is not hard to
obtain
$$
\var(\la^{-N}\cL^N_{Q^c}(h)) \le C'' ( N+ N^2) \,\var(h)
$$
for some positive constant $C''$, and shows that
$$
|g_Q(t+s)-g_Q(t)\,g_Q(s)|
    \le 2 \Delta \mu(Q) + C \xi^{\Delta+1} (1+2C''[\frac{t}{\mu(Q)}]^2)
    \le C \mu(Q)^{\frac34}
$$
for some $C>0$ provided that $t=\sqrt{\mu(Q)}$, $s\ge \sqrt{\mu(Q)}$
and $\Delta=\mu(Q)^{-\frac14}$. This completes the proof of the
lemma.
\end{proof}

We finish this section with the following:

\begin{proof}[Proof of Theorem~\ref{thm.first.entrance.distribution}]
Let $t \geq 0$, $n \geq 1$ and $\vep>0$ be fixed, and let
$\cQn_\vep$ be as in the previous lemma. Take $Q \in \cQn_\vep$ and
set $k=[\frac{t}{\sqrt{\mu(Q)}}]$. The strategy is to divide the
estimate on the distribution of the entrance time $g_Q(t)$ in blocks
where some independence holds. Write $t=k \sqrt{\mu(Q)}+r$, with
$0\leq r < \sqrt{\mu(Q)}$, and note that
\begin{multline*}\tag{$\dagger$}
\Big| g_Q(t)- e^{-t}\Big|
    \leq \Big| g_Q(t)- g_Q(k \sqrt{\mu(Q)}) \Big|
    + \Big| g_Q(k \sqrt{\mu(Q)}) - g_Q(\sqrt{\mu(Q)} )^k \Big| \\
    + \Big| g_Q(\sqrt{\mu(Q)})^k - e^{-k \sqrt{\mu(Q)}}\Big|
    + \Big| e^{-k \sqrt{\mu(Q)}}- e^{-t} \Big|.
\end{multline*}
The last term in the right hand side above is clearly bounded from
above by $2 \sqrt{\mu(Q)}$, while we can use the invariance of $\mu$
to get
$$
\Big| g_Q(t)- g_Q(k \sqrt{\mu(Q)}) \Big|
    = \mu\Big( \frac{k \sqrt{\mu(Q)}}{\mu(Q)}<\tau_Q \leq
    \frac{t}{\mu(Q)} \Big)
    \leq \mu\Big( 0 <\tau_Q \leq \frac{\sqrt{\mu(Q)}}{\mu(Q)} \Big),
$$
which is bounded by $\sqrt{\mu(Q)}+\mu(Q)$ according to
Lemma~\ref{l.Pac1}. Since $Q$ belongs to $\cQn_\vep$ then \eqref{eQ}
holds and the third term in $(\dagger)$ decays exponentially fast
with $n$ because it is bounded from above by
\begin{multline*}
2k \Big[ -\log g_Q(\sqrt{\mu(Q)}) - \sqrt{\mu(Q)}\Big] \;
    \Big[ g_Q(\sqrt{\mu(Q)})^k + e^{-k \sqrt{\mu(Q)}} \Big] \\
    \leq 2k \sqrt{\mu(Q)} K e^{-\ga_1 n/2} \; \Big[ 2 e^{-k \sqrt{\mu(Q)} (1-Ke^{-\ga_1 n/2})} \Big].
    %\leq C e^{-\tilde\ga n}, for some $C>0$ and $\tilde\ga>0$.
\end{multline*}
Finally, we deal with the second term in the right hand side of
$(\dagger)$. Indeed, it is not difficult (see e.g.
\cite[Lemma~6]{GaSc97}) to use induction in
Lemma~\ref{l.independence} and obtain
\begin{equation}\label{eq.Galves.Schmitt}
\Big| g_Q(k \sqrt{\mu(Q)})- g_Q(\sqrt{\mu(Q)} )^k \Big|
    \leq C \frac{\mu(Q)^{3/4}}{1-g_Q(\sqrt{\mu(Q)})}.
\end{equation}
Using the last inequalities in the proof of
Lemma~\ref{l.special.cylinders}, every $Q\in\cQn_\vep$ satisfies
$1-g_Q(\sqrt{\mu(Q)}) \geq \sqrt{\mu(Q)} (1-K e^{-\ga_1 n})$, which
guarantees that the second term in $(\dagger)$ satisfies
$$
\Big| g_Q(k \sqrt{\mu(Q)})- g_Q(\sqrt{\mu(Q)} )^k \Big|
        \leq \frac{\mu(Q)^{3/4}}{\sqrt{\mu(Q)} (1-K e^{-\ga_1 n})}
        \leq \mu(Q)^{1/4}
$$
and decreases exponentially fast with $n$. This completes the proof
of the theorem.
\end{proof}

%%%%%%%%%%%%%%%%%%%%%%%%%%%%%%%%%%%%%%%%%%%%%%%%%%%%%%%%%%%%%%%%%%%%%%%%%
\section{Fluctuations of the return times}\label{sec.fluctuations}

This section is devoted to the proof of
Theorem~\ref{thm.recurrence.fluctuations}. We explore the
exponential asymptotic distribution of hitting times, the weak Gibbs
property of $\mu$ and the Central Limit Theorem to obtain the
log-normal distribution of the return times. The following relation
between hitting times and return times, similar to Lemma~4.1 in
\cite{Pac00}, is a consequence of the good mixing properties for
$\mu$.

\begin{lemma}\label{l.hitting.vs.recurrence}
Let $(t_n)$ be a sequence such that $\lim_{n\to\infty} t_n/n \to
+\infty$. Then
$$
\lim_{n\to\infty}
    \Big| \mu (R_n>t_n) - \sum_{Q \in \cQn} \mu(Q) \;\mu\Big(\tau_Q > t_n \Big) \Big|
    =0.
$$
\end{lemma}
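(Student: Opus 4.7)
The identity $R_n(x) = \tau_{\cQn(x)}(x)$ immediately gives
$$
\mu(R_n > t_n) = \sum_{Q \in \cQn} \mu\bigl(Q \cap \{\tau_Q > t_n\}\bigr),
$$
so the task reduces to proving
$$
\Delta(n) := \sum_{Q \in \cQn}\bigl| \mu\bigl(Q \cap \{\tau_Q > t_n\}\bigr) - \mu(Q)\, \mu\bigl(\tau_Q > t_n\bigr) \bigr| \xrightarrow[n \to \infty]{} 0,
$$
which is a quantitative near-independence statement between the starting cylinder and the first hit of that same cylinder. I shall introduce an auxiliary scale $\Delta_n$ with $n \ll \Delta_n \ll t_n$ (concretely $\Delta_n = \alpha n$ with $\alpha$ a large constant chosen below) and exploit the factorisation $\{\tau_Q > t_n\} = \{\tau_Q > \Delta_n\} \cap f^{-\Delta_n}\{\tau_Q > t_n - \Delta_n\}$ together with the exponential mixing of Corollary \ref{c.correlation.decay}.

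Given such a $\Delta_n$, I decompose the per-cylinder error into three contributions: (a) the truncation $\mu(Q)\,\bigl|\mu(\tau_Q > t_n) - \mu(\tau_Q > t_n - \Delta_n)\bigr|$, which by invariance of $\mu$ and Lemma \ref{l.Pac1} is at most $\Delta_n\, \mu(Q)^2$; (b) the self-return term $\mu(Q \cap \{\tau_Q \le \Delta_n\})$ arising from omitting the factor $\{\tau_Q > \Delta_n\}$; and (c) the correlation term $\bigl|\mu(Q \cap f^{-\Delta_n}\{\tau_Q > t_n - \Delta_n\}) - \mu(Q)\,\mu(\tau_Q > t_n - \Delta_n)\bigr|$, which by Corollary \ref{c.correlation.decay} is bounded by $C \xi^{\Delta_n} \|1_Q\|_\theta$. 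Under the condition $\theta e^{\sup\phi} < 1$ (automatic from $(\star)$ and \eqref{eq.relation.expansion}) the norm $\|1_Q\|_\theta$ is in fact uniformly bounded in $Q$ and $n$, so summing (a) over $Q \in \cQn$ yields at most $\Delta_n \max_Q \mu(Q) \le \Delta_n C e^{-\gamma_1 n}$ by Proposition \ref{p.decreasing.measure} and the boundedness of $d\mu/d\nu$, while summing (c) yields at most $C \xi^{\Delta_n} \#\cQn \le C' \xi^{\Delta_n} \deg(f)^n$, which vanishes once $\alpha > \log\deg(f)/|\log\xi|$.

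The delicate piece is (b). Given any $\vep > 0$, I reuse the set $\cQn_\vep$ of ``good'' $n$-cylinders---those having no self-intersection within the time window $[1, \zeta n]$---constructed in the proof of Lemma \ref{l.special.cylinders}, where $\zeta = \zeta(\vep)$ is small enough that $\mu\bigl(\bigcup_{Q \notin \cQn_\vep} Q\bigr) \le \vep$. For $Q \in \cQn_\vep$ all intersections $Q \cap f^{-k}(Q)$ with $1 \le k \le \zeta n$ are empty; for $\zeta n < k \le \Delta_n$ the cylinder estimate $\mu(Q \cap f^{-k}(Q)) \le \mu(Q)^2 + C \xi^k \mu(Q)$ from Corollary \ref{c.correlation.decay} applies. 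Summing in $k$ and then over good $Q$ yields
$$
\sum_{Q \in \cQn_\vep} \mu\bigl(Q \cap \{\tau_Q \le \Delta_n\}\bigr) \le \Delta_n\, \max_Q \mu(Q) + \frac{C\, \xi^{\zeta n}}{1 - \xi},
$$
which decays exponentially, while the complementary sum over bad cylinders is trivially at most $\vep$. Hence $\limsup_n \Delta(n) \le \vep$, and since $\vep$ is arbitrary, $\Delta(n) \to 0$.

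The main obstacle is the simultaneous constraint $\zeta n < \Delta_n$ with $\zeta$ small (to force the total measure of short-return cylinders below $\vep$ via the combinatorial bound $(\#\cQ)^{\zeta n} e^{-\gamma_1 n}$) and $\Delta_n$ large enough that $\xi^{\Delta_n}\, \deg(f)^n \to 0$; both are simultaneously achievable thanks to the exponential decay rate $|\log \xi|$ provided by the spectral gap, which dominates the topological growth rate $\log\deg(f)$ after enough iterations.
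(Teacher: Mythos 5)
Your proof is correct and follows essentially the same route as the paper's: the same telescoping decomposition into a truncation term, a short-return term, and a correlation term, handled respectively by invariance (plus Lemma~\ref{l.Pac1}), the exponential decay of cylinder $\mu$-measures, and Corollary~\ref{c.correlation.decay}. The only cosmetic differences are that you use a linear correlation cutoff $\Delta_n=\alpha n$ with $\alpha$ chosen large relative to $\log\deg(f)/|\log\xi|$, where the paper instead takes $r(n)=\min\{t_n,n^2\}$ so as not to depend on the value of $\xi$, and that you close the argument via a $\limsup\le\vep$ estimate using the good-cylinder family $\cQn_\vep$ from Lemma~\ref{l.special.cylinders}, whereas the paper obtains a direct quantitative rate by bounding the measure of the set $E_{<k}$ of cylinders with a self-return before time $k(n)=[\gamma_1 n/(2\log\deg(f))]$.
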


\begin{proof}
Since
$$
\mu(R_n > t)
        = \sum_{Q \in \cQn} \mu(Q \cap \{\tau_Q >  t\})
$$
we will estimate the differences $\mu(Q \cap \{\tau_Q>t\})-\mu(Q)
\mu(\tau_Q>t)$ for elements $Q\in\cQn$. In fact, given $k<n<r<t$ and
$Q\in \cQn$ write
\begin{align}
(\ast) = \mu(Q \cap  & \{\tau_Q>t\})-\mu(Q) \mu(\tau_Q>t) \nonumber \\
 &\leq  \mu(Q \cap \{\tau_Q>t\})-\mu(Q \cap \tau_Q\notin[r,t])\label{eq.indep1}\\
 &+   \mu(Q \cap \tau_Q\notin[r,t])- \mu(Q) \mu(\tau_Q \notin[r,t]) \label{eq.indep2}\\
 &+   \mu(Q) \Big[\mu(\tau_Q\notin[r,t])-\mu(\tau_Q>t)\Big] \label{eq.indep3}
\end{align}
It is clear that \eqref{eq.indep3} coincides with $\mu(Q)
\mu(\tau_Q<r)$, which is bounded from above by $r \mu(Q)^2$, because
$\mu(\tau_Q<r)\leq \mu(\cup_{j\leq r} f^{-j} Q)$. Moreover, by
exponential decay of correlations
$$
|\eqref{eq.indep2}|
    \leq |\mu( Q \cap f^{-r}(\cap_{j=0}^{t-r} f^{-j}(Q^c)) -\mu(Q) \mu(f^{-r}(\cap_{j=0}^{t-r}f^{-j}(Q^c)))|
    \leq K \xi^r.
$$
On the other hand, $|\eqref{eq.indep1}|$ is bounded from above by
$\sum_{j=0}^r \mu(Q \cap f^{-j}(Q))$. To deal with this last term we
will consider differently wether $Q$ belongs to the set $E_{<k}$ of
$n$-cylinders such that there exists $0\leq i\leq k$ so that
$f^{-i}(Q) \cap Q \neq \emptyset$, or not. In fact, summing up over
elements in $\cQn$ and using the exponential decay of correlations
it follows that
\begin{align*}
\sum_{Q\in\cQn} |\eqref{eq.indep1}|
    %\leq \sum_{Q\in\cQn} \sum_{j=0}^r \mu(Q \cap f^{-j}(Q))
    \leq \sum_{Q\in E_{<k}} & \sum_{j=0}^r \mu(Q \cap f^{-j}(Q))
        + \sum_{Q\in E_{<k}^c} \sum_{j=0}^r \mu(Q \cap f^{-j}(Q))\\
    &\leq r \mu (E_{<k}) +\sum_{Q\in E_{<k}^c} \sum_{j=k}^r [K' \xi^j +\mu(Q)] \;\mu(Q)\\
    &\leq r \; \mu (E_{<k}) +   r [K' \xi^k +e^{-\ga_1 n}] \\
\end{align*}
where $K'$ is a constant that involves the constant $K$ from decay
of correlations and an upper bound for $\|1_Q\|_\theta$. Using that
$\mu (E_{<k}) \leq  \#\cQk e^{-\ga_1 n}$ and the previous estimates
we obtain
$$
\sum_{Q\in\cQn} |(\ast)|
    \leq r e^{-\ga_1 n}
    + K \# \cQn \xi^r
    + r \; \#\cQk e^{-\ga_1 n} +  r [K' \xi^k +e^{-\ga_1 n}].
$$
The previous inequality holds for $r(n)=\min\{t_n, n^2\}$ and
$k(n)=\Big[\frac{\ga_1}{2\log\deg(f)} n \Big]$. In fact, we get
\begin{align*}
\Big| \mu & (R_n>t_n)  - \sum_{Q \in \cQn} \mu(Q) \;\mu\Big(\tau_Q > t_n \Big) \Big| \\
%    \leq n^2 e^{-\ga_1 n}
%        + K \# \cQ \deg(f)^n \xi^{r(n)}
%        + n^2 \; \#\cQ \deg(f)^{k(n)} e^{-\ga_1 n} +  n^2 [K' \xi^{k(n)} +e^{-\ga_1 n}] \\
    & \leq n^2 e^{-\ga_1 n}
        + K \# \cQ \deg(f)^n \xi^{r(n)}
        + n^2 \; \#\cQ e^{-\ga_1 n/2} +  n^2 [K' \xi^{k(n)} +e^{-\ga_1 n}]. \\
\end{align*}
The expression in the right hand side above tends to zero as
$n\to\infty$. Indeed, note that the second term in the right hand
side above tends to zero because $r(n)/n \to \infty$, by
construction. This completes the proof of the lemma.
\end{proof}
\vspace{.2cm}

\begin{proof}[Proof of Theorem~\ref{thm.recurrence.fluctuations}]
Let $\phi$ be an H\"older continuous potential as above such that
$\si=\si(\phi)>0$, and fix $\vep>0$ arbitrary small. Given $t\in
\mathbb R$ and $n \ge 1$
\begin{align*}
\mu\Big(\frac{\log R_n-n h_\mu(f)}{\si \sqrt n}>t\Big)
        & =\mu(R_n > e^{n h_\mu(f)} e^{\si t \sqrt n}) \\
        & = \sum_{Q \in \cQn} \mu(Q \cap \{\tau_Q >  e^{n h_\mu(f)} e^{\si t \sqrt n}\}).
\end{align*}
Let $\cQn_\vep$ be the family of cylinders given by
Theorem~\ref{thm.first.entrance.distribution}. Since $\cup\{Q \in
\cQn_\vep\}$ has $\mu$-measure at least $1-\vep$ and the first
entrance time $\tau_Q$ of every cylinder $Q\in\cQn_\vep$ has
exponential distribution up to a small error, then
$$
\mu\Big(\frac{\log R_n-n h_\mu(f)}{\si \sqrt n}>t\Big)
        = \! \sum_{Q \in \cQn_\vep} \mu(Q) \;\mu\Big(\tau_Q > e^{n h_\mu(f)} e^{\si t \sqrt n}\Big)
        + \cO(e^{-\beta n})+ \cO(\vep).
$$
This is a consequence of the previous
Lemma~\ref{l.hitting.vs.recurrence} with $t_n=e^{nh_\mu(f)+\si
t\sqrt n}$. Since $\vep>0$ was chosen arbitrary, to show the
log-normal distribution of the return times we are left to prove the
convergence
\begin{equation}\label{eq.final.fluctuation}
 \lim_{n \to \infty} \Bigg[ \sum_{Q \in \cQn_\vep} \mu(Q) \; e^{-\mu(Q)e^{n h_\mu(f)} e^{\si t \sqrt n}}\Bigg]
    = \frac{1}{\sqrt{2\pi}}\int_{t}^\infty e^{-\frac{x^2}{2}} \,dx + \cO(\vep).
\end{equation}
By Lemma~\ref{l.weak.Gibbs} and Corollary~\ref{c.decay.Kn}, there is
$a\in \mathbb N$ and for $\mu$-almost every $x$ there is a sequence
$(K_n)_n$ (depending on $x$) satisfying $K_n(x)\leq n^a$ for all but
finitely many values of $n$ and such that
$$
K_n(x)^{-1}
        \leq
\frac{\mu(Q_n(x))}{e^{-Pn + S_n \phi(x)}}
        \leq
K_n(x)
$$
for every $n \geq 1$. Using also $\mu(\cup \{Q : Q \notin
\cQn_\vep\})<\vep$ and $h_\mu(f)+ \int \phi \,d\mu=\Ptop(f,\phi)=P$,
for any $\rho>0$
\begin{align*}
\sum_{Q \in \cQ^{(n)}_\vep} & \mu(Q) \; e^{-\mu(Q)e^{n h_\mu(f)}
e^{\si t \sqrt n}}
        = \int_{\cup \{Q : Q \in \cQn_\vep\}} e^{-\mu(Q(x)) e^{n h_\mu(f)} e^{\si t \sqrt n}}
        d\mu(x) \\
    & \geq e^{-e^{-\rho \si\sqrt n}}
    \Big[\mu\Big( x \in M \mid e^{-\mu(Q_n(x))e^{n h_\mu(f)} e^{\si t \sqrt n}} > e^{-e^{-\rho \si\sqrt n}}\Big) -\vep\Big]\\
    & \geq e^{-e^{-\rho \si\sqrt n}}
    \Big[\mu\Big( x \in M \mid \frac{-S_n\phi(x)+ n \int \phi d\mu}{\si \sqrt n}  > t + \rho  + \frac{1}{\sqrt n}\log
    K_n(x) \Big)-\vep\Big].
\end{align*}
Since $\phi$ belongs to $V_\theta$ (recall
Lemma~\ref{l.Vtheta.contains.Holder}) and it satisfies the Central
Limit Theorem (see Corollary~\ref{c.CLT}), taking the limit as
$n\to\infty$ and $\rho\to 0$ we obtain that
$$
 \liminf_{n \to \infty} \Bigg[ \sum_{Q \in \cQn_\vep} \mu(Q) \; e^{-\mu(Q)e^{n h_\mu(f)} e^{\si t \sqrt n}}\Bigg]
    \geq \frac{1}{\sqrt{2\pi}}\int_{t}^\infty e^{-\frac{x^2}{2}} \,dx -\vep.
$$
The upper estimate in \eqref{eq.final.fluctuation} is obtained
analogously. Indeed, for any $\rho>0$
\begin{align*}
\sum_{Q \in \cQ^{(n)}_\vep} & \mu(Q) \; e^{-\mu(Q)e^{n h_\mu(f)}
e^{\si t \sqrt n}}
        = \int_{\cup \{Q : Q \in \cQn_\vep\}} e^{-\mu(Q(x)) e^{n h_\mu(f)} e^{\si t \sqrt n}} d\mu(x) \\
 & \leq \mu\Big( x \in \cup \{Q : Q \in \cQn_\vep\} \mid e^{-\mu(Q_n(x))e^{n h_\mu(f)} e^{\si t \sqrt n}} > e^{-e^{-\rho \si\sqrt n}}\Big)\\
 & \leq e^{e^{-\rho \si\sqrt n}}
    \Big[\mu\Big( x \in M \mid e^{-\mu(Q_n(x))e^{n h_\mu(f)} e^{\si t \sqrt n}} > e^{-e^{-\rho \si\sqrt n}}\Big) +\vep\Big]\\
 & \leq e^{e^{-\rho \si\sqrt n}}
    \Big[\mu\Big( x \in M \mid \frac{-S_n\phi(x)+ n \int \phi d\mu}{\si \sqrt n}  > t + \rho  - \frac{1}{\sqrt n}\log
    K_n(x) \Big)+\vep\Big].
\end{align*}
taking the limit as $n\to\infty$ and $\rho\to 0$ one gets
$$
 \limsup_{n \to \infty} \Bigg[ \sum_{Q \in \cQn_\vep} \mu(Q) \; e^{-\mu(Q)e^{n h_\mu(f)} e^{\si t \sqrt n}}\Bigg]
    \leq \frac{1}{\sqrt{2\pi}}\int_{t}^\infty e^{-\frac{x^2}{2}} \,dx +\vep,
$$
which proves the upper bound in \eqref{eq.final.fluctuation}. The
proof of the theorem is now complete.
\end{proof}

%%%%%%%%%%%%%%%%%%%%%%%%%%%%%%%%%%%%%%%%%%%%%%%%%%%%%%%%%%%%%%%%%%%%%%%

\medskip \textbf{Acknowledgements:} Most of this work was done at IMPA,
during the author's PhD under the guidance of Marcelo Viana. The
author greatly acknowledges IMPA excellent conditions and scientific
environment. The author is grateful to  V. Ara\'ujo for reading a
preliminary version of this paper and making important suggestions,
and to A. Galves for providing a copy of \cite{GaSc97}. This work
was partially supported by Funda\c c\~ao para a Ci\^encia e
Tecnologia (FCT-Portugal) by the grant SFRH/BD/11424/2002) and by
Funda\c c\~ao de Amparo \`a Pesquisa do Estado do Rio de Janeiro -
FAPERJ.

%%%%%%%%%%%%%%%%%%%%%%%%%%%%%%%%%%%%%%%%%%%%%%%%%%%%%%%%%%%%%%%%%%%%%%%%

\bibliographystyle{alpha}
%\bibliography{bib}

\end{document}